\theoremstyle{plain}
\newtheorem{theorem}{Theorem}[section]
\newtheorem{proposition}[theorem]{Proposition}
\newtheorem{lemma}[theorem]{Lemma}
\newtheorem{remark}[theorem]{Remark}
\newtheorem{definition}[theorem]{Definition}
\newcommand{\bydef}{\,\stackrel{\mbox{\tiny\textnormal{\raisebox{0ex}[0ex][0ex]{def}}}}{=}\,}
\newcommand{\CC}{\mathcal{C}}
\newcommand{\R}{\mathbb{R}}
\newcommand{\N}{\mathbb{N}}
\newcommand{\1}{\mathbf{1}}
\newcommand{\E}{\mathcal{E}}
\newcommand{\X}{\mathcal X}
\newcommand{\tC}{\tilde C}
\newcommand{\vect}[1]{{\boldsymbol{#1}}}
\newcommand{\bfr}{{\vect{r}}}
\title{Polynomial interpolation and a priori bootstrap for computer-assisted proofs in nonlinear ODEs}
\author{Maxime Breden \thanks{CMLA, ENS Cachan, CNRS, Université Paris-Saclay, 61 avenue du Pr\'esident Wilson, 94230 Cachan, France} \thanks{Département de Mathématiques et de Statistique, Université Laval, 1045 avenue de la Médecine, Québec, QC, G1V 0A6, Canada.}
\and Jean-Philippe Lessard\footnotemark[2] }
\date{}
\begin{document}

\maketitle

\begin{abstract}
In this work, we introduce a method based on piecewise polynomial interpolation to enclose rigorously solutions of nonlinear ODEs. Using a technique which we call {\em a priori bootstrap}, we transform the problem of solving the ODE into one of looking for a fixed point of a high order smoothing Picard-like operator. We then develop a rigorous computational method based on a Newton-Kantorovich type argument (the radii polynomial approach) to prove existence of a fixed point of the Picard-like operator. We present all necessary estimates in full generality and for any nonlinearities. With our approach, we study two systems of nonlinear equations: the Lorenz system and the ABC flow. For the Lorenz system, we solve Cauchy problems and prove existence of periodic and connecting orbits at the classical parameters, 
and for ABC flows, we prove existence of ballistic spiral orbits.
\end{abstract}

\begin{center}
{\bf \small Key words} 
{\small Computer-assisted proof $\cdot$ Nonlinear ODEs $\cdot$ Picard-like operator \\
Piecewise polynomial interpolation $\cdot$ Newton-Kantorovich $\cdot$ ABC flows}
\end{center}

\begin{center}
{\bf \small 2010 AMS Subject Classification} {\small 65G20 $\cdot$ 65P99 $\cdot$ 65D30 $\cdot$ 37M99 $\cdot$ 37C27}
\end{center}

\section{Introduction}
\label{sec:intro}

This paper introduces an approach based on polynomial interpolation to obtain mathematically rigorous results about existence of solutions of nonlinear ordinary differential equations (ODEs). Our motivation for the present work is threefold. First, we believe that polynomial interpolation techniques are versatile and can lead to efficient and general computational methods to approximate solutions of ODEs with complicated (non polynomial) nonlinearities. Second, while polynomial interpolation techniques have be used to produce computer-assisted proofs in ODEs, their applicability to produce proofs is sometimes limited by the formulation of the problem itself. More precisely, a standard way to prove (both theoretically and computationally) existence of solutions of systems of ODEs is to reformulate the problem into an integral equation (often in the form of a Picard operator) and then to apply the contraction mapping theorem to get existence. If one is interested to produce computer-assisted proofs using that approach, the analytic estimates required to perform the proofs depend on the amount of regularity gained by applying the integral operator. This observation motivated developing what we call the {\em a priori bootstrap}, which consists of reformulating the original ODE problem into one of looking for the fixed point of a higher order smoothing Picard-like operator. Third, we believe (and hope) that our proposed method can be adapted to study infinite dimensional continuous dynamical systems (e.g. partial differential equations and delay differential equations) for which spectral methods may sometimes be difficult to apply (for instance because of the shape of the spatial domains or because the differential operators are difficult to invert in a given spectral basis). 

It is important to realize that computer-assisted arguments to study differential equations are by now standard, and that providing a complete overview of the literature would require a tremendous effort and is outside the scope of this paper. However, we encourage the reader to read the survey papers \cite{MR1962787,MR1420838,MR2652784,Jay_Konstantin_Survey,MR1849323,notices_jb_jp}, the books \cite{MR2807595,MR3467671} and to consult the webpage of the CAPD group \cite{capd} to get a flavour 
of the extraordinary recent advances made in the field. 

More closely related to the present work are methods based on the contraction mapping theorem using the \emph{radii polynomial approach} (first introduced in \cite{MR2338393}), which has been developed in the last decade to study fixed points, periodic orbits, invariant manifolds and connecting orbits of ODEs, partial differential equations and delay differential equations (see for instance \cite{MR3353132,BerShe15,MR3437754,CasLesMir16,MR3623202,MR2592879}). The numerics and a posteriori analysis in those works mainly use spectral methods like Fourier and Chebyshev series, and Taylor methods. First order polynomial (piecewise linear) approximations were also used using the radii polynomial approach (see \cite{MR3323206,MR2821596,MR3207723}), but more seldom, mainly because the numerical cost was higher and the accuracy was lower than for spectral methods. The computational cost of these low order methods is essentially due to the above mentioned low gain of regularity of the Picard operators chosen to perform the computer-assisted proofs. 

In an attempt to address the low gain of regularity problem, we present here a new technique that we call \emph{a priori bootstrap} which, when combined with the use of higher order interpolation, significantly improves the efficiency of computer-assisted proofs with polynomial interpolation methods. We stress that the limitations that affected the previous works using interpolation were not solely due to the use of first order methods, and that the \emph{a priori bootstrap} is crucial (that is, just increasing the order of the interpolation does not significantly improve the results in those previous works). This point is illustrated in Section~\ref{sec:applications}.

While we believe that one of the advantage of our proposed method is the versatility of the polynomial interpolations to 
tackle problems with complicated (non polynomial) nonlinearities, we hasten to mention the existence of previous powerful methods which have been developed in rigorous computing to study such problems. For instance, {\em automatic differentiation} (AD) techniques provide a beautiful and efficient means of computing solutions of nonlinear problems (e.g. see \cite{MR2807595,MR633878,MR2146523}) and are often combined with Taylor series expansions to prove existence of solutions of differential equations with non polynomial nonlinearities (e.g. see \cite{MR1962787, MR1652147, MR2644324,MR1930946, MR1961956, MR2049869,Ru99a}). Also, in the recent work \cite{MR3545977}, the ideas of AD are combined with Fourier series to prove existence of periodic orbits in non polynomial vector fields. Independently of AD techniques, a method involving Chebyshev series to approximate the nonlinearities have been proposed recently in \cite{MR3124898}. Finally, the fast Fourier transform (FFT) algorithm is used in \cite{FHL_KAM} to control general nonlinearities in the context of computer-assisted proofs in KAM theory.

In this paper we  consider $\phi:\R^n\to\R^n$ a $C^{\bfr}$ vector field (not necessarily polynomial) with ${\bfr} \ge 1$, and we present a rigorous numerical procedure to study problems of the form
\begin{equation}
\label{eq:general_problem}
\left\{
\begin{aligned}
&\frac{du}{dt}(t)=\phi(u(t)), \quad t\in[0,\tau],\\
&BV(u(0),u(\tau))=0.
\end{aligned}
\right.
\end{equation}

We treat three special cases for $BV$, corresponding to an initial value problem, a problem of looking for periodic orbits and a problem of looking for connecting orbits. We also note that, as for the already existing spectral methods, the technique presented here extends easily to treat parameter dependent versions of~\eqref{eq:general_problem} (e.g. using ideas from \cite{MR2630003,MR3125637,MR3464215}). For the sake of simplicity, we expose all the general arguments for the initial value problem only, that is for
\begin{equation}
\label{eq:general_IVP}
\left\{
\begin{aligned}
&\frac{du}{dt}(t)=\phi(u(t)), \quad t\in[0,\tau],\\
&u(0)=u_0,
\end{aligned}
\right.
\end{equation}
given an integration time $\tau > 0$ and an initial condition $u_0 \in \R^n$. We explain how~\eqref{eq:general_IVP} needs to be modified for different problems as we introduce them in Sections~\ref{sec:applications} and~\ref{sec:ABC}.

Our paper is organized as follows. In Section~\ref{sec:a_priori_bootstrap}, we start by presenting our \emph{a priori bootstrap} technique, together with a \emph{piecewise} reformulation of the operator that we use throughout this work. We then recall in Section~\ref{sec:general} some definitions and error estimates about polynomial interpolation, and explain how to combine them with our \emph{a priori bootstrap} formulation to get computer-assisted proofs. The precise estimates needed for the proofs are then derived in Section~\ref{sec:bounds}, and their dependency with respect to the \emph{a priori bootstrap} and to the parameters of the polynomial interpolation is commented in Section~\ref{sec:para}. This discussion is complemented by several examples in Section~\ref{sec:applications}, where we apply our technique to validate solutions for the Lorenz system. Finally we give another example of application in Section~\ref{sec:ABC}, where we prove the existence of some specific orbits for ABC flows.

\section{Reformulations of the Cauchy problem}
\label{sec:a_priori_bootstrap}
\subsection{A priori bootstrap}

One of the usual strategies used to study~\eqref{eq:general_IVP}, both theoretically and numerically, is to recast it as a fixed point problem, as in the following lemma.
\begin{lemma}
Consider the standard Picard operator
\begin{equation*}
f:\left\{
\begin{aligned}
\mathcal{C}^0([0,1],\R^n) &\to \mathcal{C}^1([0,1],\R^n) \\
u &\mapsto\ f(u),
\end{aligned}
\right.
\end{equation*}
where
\begin{equation} \label{eq:standard_Picard}
f(u)(t) \bydef u_0+\tau\int_0^t \phi(u(s))ds,\quad \text{for all}~t\in [0,1].
\end{equation}
Then $u$ is a fixed point of $f$ if and only if $v:t\mapsto u(\frac{t}{\tau})$ is a solution of~\eqref{eq:general_IVP}.
\end{lemma}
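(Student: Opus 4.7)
The proof is a direct verification that amounts to a time rescaling plus the fundamental theorem of calculus, so I would organize it as two implications and keep the computation transparent.

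First I would handle the forward direction. Assume $u \in \mathcal{C}^0([0,1],\R^n)$ satisfies $u = f(u)$. Since the right-hand side of \eqref{eq:standard_Picard} is $\mathcal{C}^1$ in $t$, the identity $u=f(u)$ automatically upgrades the regularity of $u$ to $\mathcal{C}^1([0,1],\R^n)$. Evaluating at $t=0$ gives $u(0)=u_0$, and differentiating \eqref{eq:standard_Picard} gives $u'(t)=\tau\phi(u(t))$ for all $t\in[0,1]$. Now define $v(t)=u(t/\tau)$ for $t\in[0,\tau]$. A direct chain rule computation shows
\begin{equation*}
v'(t) \;=\; \tfrac{1}{\tau}\, u'(t/\tau) \;=\; \tfrac{1}{\tau}\cdot\tau\,\phi(u(t/\tau)) \;=\; \phi(v(t)),
\end{equation*}
while $v(0)=u(0)=u_0$, so $v$ solves \eqref{eq:general_IVP}.

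For the converse, suppose $v$ solves \eqref{eq:general_IVP} on $[0,\tau]$ and define $u(t)=v(\tau t)$ for $t\in[0,1]$. Then $u(0)=v(0)=u_0$, and the chain rule yields $u'(t)=\tau\,v'(\tau t)=\tau\,\phi(v(\tau t))=\tau\,\phi(u(t))$. Integrating this identity between $0$ and $t$ and using $u(0)=u_0$ gives
\begin{equation*}
u(t) \;=\; u_0 + \tau\int_0^t \phi(u(s))\,ds \;=\; f(u)(t),
\end{equation*}
so $u$ is a fixed point of $f$.

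There is no genuine obstacle here: the lemma is essentially a bookkeeping statement that justifies working on the normalized interval $[0,1]$ with the scaled vector field $\tau\phi$, rather than on $[0,\tau]$ with $\phi$. The only small care needed is to note that the fixed-point equation $u=f(u)$ promotes $u$ from $\mathcal{C}^0$ to $\mathcal{C}^1$, which is what allows us to differentiate and obtain the ODE in the forward direction; this gain of one derivative is exactly the regularity-boosting phenomenon that the paper will later exploit more aggressively in its \emph{a priori bootstrap}.
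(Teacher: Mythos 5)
Your proof is correct. The paper itself does not provide a proof of this lemma (it is stated without one, presumably as standard), and your argument is exactly the routine one: in the forward direction the fixed-point identity $u=f(u)$ automatically upgrades $u$ to $\mathcal{C}^1$ (since the integral of a continuous function is $\mathcal{C}^1$), after which evaluation at $t=0$ and differentiation recover the initial condition and the rescaled ODE $u'=\tau\phi(u)$, and the substitution $v(t)=u(t/\tau)$ converts this into \eqref{eq:general_IVP}; in the converse direction the chain rule and the fundamental theorem of calculus reverse the steps. The one point you are right to make explicit — that the equation $u=f(u)$ itself forces the extra derivative — is precisely the smoothing phenomenon the paper amplifies later with the a priori bootstrap, so flagging it here is apt.
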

In previous works using this reformulation, the limiting factor in the estimates needed to apply the contraction mapping theorem was a consequence of the fact that $f$ only gains one order of regularity, that is maps $\CC^0$ into $\CC^1$. This fact will be made precise in Section~\ref{sec:bounds} where we derive the estimates in question and in Section~\ref{sec:para} where we discuss how those estimates affect the effectiveness of our technique.

To circumvent this limitation, we propose a different reformulation that we call \emph{a priori bootstrap}. This approach provides operators which gain more regularity, and therefore lead to sharper analytic estimates. First we introduce some notations. The following definition allows concisely describing the higher order equations obtained by taking successive derivatives of~\eqref{eq:general_IVP}.

\begin{definition}
Consider the sequence of vector fields $\left( \phi^{[p]} \right)_{0\leq p\leq \bfr+1}$ with $\phi^{[p]} : \R^n \to \R^n$,
\begin{equation*}
\phi^{[0]}(u) \bydef u\quad \text{and}\quad \phi^{[p+1]}(u) \bydef D\phi^{[p]}(u)\phi(u)\quad \text{for all } u \in \R^n \text{ and for all } p =  0,\dots,\bfr.
\end{equation*}
\end{definition}
\begin{lemma}
For any $1\leq p\leq \bfr+1$, $u$ solves~\eqref{eq:general_IVP} if and only if $u$ solves the following Cauchy problem
\begin{equation}
\label{eq:order_p_ODE}
\left\{
\begin{aligned}
& \frac{d^pu}{dt^p}(t)=\phi^{[p]}(u(t)), \quad t\in[0,\tau],\\
& \frac{d^qu}{dt^q}(0)=\phi^{[q]}(u_0), \quad \text{for all } q = 0,\dots,p-1.
\end{aligned}
\right.
\end{equation}
\end{lemma}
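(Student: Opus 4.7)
The proof splits into the two implications. For the forward direction ($\Rightarrow$), the plan is a short induction on $q$ from $0$ to $p$ establishing $u^{(q)}(t) = \phi^{[q]}(u(t))$ for every $t\in[0,\tau]$. The base case $q=0$ is immediate from $\phi^{[0]}(u)=u$; the inductive step is a one-line application of the chain rule using $u'=\phi(u)$:
\[
u^{(q+1)}(t)=\frac{d}{dt}\bigl[\phi^{[q]}(u(t))\bigr]=D\phi^{[q]}(u(t))\,u'(t)=D\phi^{[q]}(u(t))\,\phi(u(t))=\phi^{[q+1]}(u(t)).
\]
This is legitimate for $q\le\bfr$, since then $\phi^{[q]}\in C^{\bfr-q+1}\subset C^1$, and the induction only needs to reach $q=p\le\bfr+1$. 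Setting $q=p$ yields the ODE and evaluating at $t=0$ yields the initial conditions of~\eqref{eq:order_p_ODE}.

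For the reverse direction ($\Leftarrow$), the plan is to introduce the defects
\[
w_q(t)\bydef u^{(q)}(t)-\phi^{[q]}(u(t)),\qquad q=0,\dots,p,
\]
and show that $w_1\equiv 0$. By assumption, $w_q(0)=0$ for $q=0,\dots,p-1$ (initial conditions) and $w_p\equiv 0$ (the ODE itself). For $1\le q\le p-1$ a direct computation, using $\phi^{[q+1]}(u)=D\phi^{[q]}(u)\,\phi(u)$ together with $\phi(u)-u'=-w_1$, gives
\[
w_q'(t)=u^{(q+1)}(t)-D\phi^{[q]}(u(t))\,u'(t)=w_{q+1}(t)-D\phi^{[q]}(u(t))\,w_1(t).
\]
Together with $w_p\equiv 0$, this is a \emph{homogeneous linear} first-order system in the unknown $(w_1,\dots,w_{p-1})$ with continuous coefficients $t\mapsto D\phi^{[q]}(u(t))$ (continuous since $q\le p-1\le\bfr$), and with zero initial data. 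Uniqueness for linear ODEs then forces $w_q\equiv 0$ for all $q=1,\dots,p-1$; in particular $w_1\equiv 0$ is precisely $u'=\phi(u)$, and $w_0(0)=0$ provides $u(0)=u_0$.

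I expect the main, and essentially only, obstacle to be arranging the defect system so that its coefficients remain continuous for every admissible $p$. The regularity budget is tight: the appearance of $D\phi^{[p-1]}$ in the equation for $w_{p-1}'$ is the worst offender, and the hypothesis $p\le\bfr+1$ is exactly what keeps it in $C^0$. A naive alternative — comparing $u$ with the solution $\tilde u$ of~\eqref{eq:general_IVP} through the first-order companion system of~\eqref{eq:order_p_ODE} — would require $\phi^{[p]}$ to be locally Lipschitz, which is not guaranteed in the boundary case $p=\bfr+1$; the defect approach above circumvents this by reducing to a linear ODE, for which mere continuity of the coefficient matrix suffices.
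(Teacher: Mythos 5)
Your proof is correct and takes essentially the same route as the paper: both reduce the converse direction to showing that the defect $e = u' - \phi(u)$ (your $w_1$) satisfies a linear ODE with vanishing initial data and hence vanishes identically. The only cosmetic difference is that the paper phrases this as a scalar linear ODE of order $p-1$ in $e$, while you work with the first-order system in $(w_1,\dots,w_{p-1})$; your version is arguably cleaner because it makes the recursion $w_q' = w_{q+1} - D\phi^{[q]}(u)\,w_1$ and the regularity budget ($D\phi^{[q]}\in C^0$ for $q\le p-1\le\bfr$) completely explicit, and your remark about why one cannot simply invoke uniqueness for the companion system of~\eqref{eq:order_p_ODE} at the boundary case $p=\bfr+1$ is exactly the point that makes the defect argument necessary.
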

\begin{proof}
The direct implication is trivial. To prove the converse application, we consider $e \bydef \frac{du}{dt}-\phi(u)$ and show that is solve a linear ODE of order $p-1$, with initial conditions $\frac{d^qe}{de^q}(0)=0$ for all $q=0,\dots,p-2$, which implies that $e \equiv 0$.
\end{proof}
Integrating the $p^{th}$ order Cauchy problem \eqref{eq:order_p_ODE} $p$ times leads to a new fixed point operator which now maps $\CC^0$ into $\CC^p$. 
\begin{lemma}
\label{lem:operator_order_p}
Let $1\leq p\leq \bfr+1$ and consider the Picard-like operator 
\begin{equation*}
\tilde g:\left\{
\begin{aligned}
\mathcal{C}^0([0,1],\R^n) &\to \mathcal{C}^p([0,1],\R^n) \\
u &\mapsto\ \tilde g(u),
\end{aligned}
\right.
\end{equation*}
where
\begin{equation} \label{eq:tilde_g_original}
\tilde g(u)(t) \bydef \sum_{q=0}^{p-1}\tau^q\frac{t^q}{q!} \phi^{[q]}(u_0)+\tau^p\int_0^t \frac{(t-s)^{p-1}}{(p-1)!}\phi^{[p]}(u(s))ds,\quad \text{for all}~t\in [0,1].
\end{equation}
Then $u$ is a fixed point of $\tilde g$ if and only if $v:t\mapsto u(\frac{t}{\tau})$ is a solution of~\eqref{eq:order_p_ODE} (and thus of~\eqref{eq:general_IVP}).
\end{lemma}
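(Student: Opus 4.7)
The plan is to establish both directions using Taylor's theorem with integral remainder, since the operator $\tilde g$ is precisely the $p$-th order Taylor expansion at $0$ with integral remainder, scaled by the rescaling $t \mapsto t/\tau$.

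For the direction ``fixed point $\Rightarrow$ solution'': suppose $u = \tilde g(u)$. The right-hand side of~\eqref{eq:tilde_g_original} lives in $\CC^p([0,1],\R^n)$, so the identity automatically promotes $u$ to class $\CC^p$. I would then differentiate the fixed-point equation $q$ times and evaluate at $t=0$ for $q=0,\ldots,p-1$; the polynomial part contributes $\tau^q \phi^{[q]}(u_0)$ while the integral remainder and its derivatives of order $<p$ all vanish at $0$. Differentiating $p$ times uses the standard identity $\frac{d^p}{dt^p}\!\int_0^t \frac{(t-s)^{p-1}}{(p-1)!}h(s)\,ds = h(t)$ and yields $\frac{d^p u}{dt^p}(t) = \tau^p \phi^{[p]}(u(t))$. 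Setting $v(s) \bydef u(s/\tau)$ and using $\frac{d^q v}{ds^q}(s) = \tau^{-q}\frac{d^q u}{dt^q}(s/\tau)$ converts these identities into exactly the system~\eqref{eq:order_p_ODE}.

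For the converse, I would apply Taylor's theorem with integral remainder to a putative solution $v$ of~\eqref{eq:order_p_ODE}. Since $v$ is $\CC^p$ on $[0,\tau]$ with $v^{(q)}(0) = \phi^{[q]}(u_0)$ and $v^{(p)}(\sigma) = \phi^{[p]}(v(\sigma))$, one has for every $s\in[0,\tau]$
\begin{equation*}
v(s) = \sum_{q=0}^{p-1}\frac{s^q}{q!}\phi^{[q]}(u_0) + \int_0^s \frac{(s-\sigma)^{p-1}}{(p-1)!}\phi^{[p]}(v(\sigma))\,d\sigma.
\end{equation*}
Substituting $s = \tau t$ and performing the change of variables $\sigma = \tau \tilde s$ in the integral produces the factors $\tau^q$ in front of the polynomial terms and $\tau^p$ in front of the integral, turning the identity into $u(t) = \tilde g(u)(t)$ where $u(t) = v(\tau t)$.

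There is no real obstacle here: the statement is essentially a rescaled restatement of Taylor's formula with integral remainder, and everything reduces to bookkeeping the factors of $\tau$ coming from the chain rule and the change of variables. The only point that deserves a line of attention is justifying the termwise differentiation of the integral remainder up to order $p$, which is standard given the continuity of $\phi^{[p]}\circ u$.
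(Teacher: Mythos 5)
Your proof is correct and follows essentially the same route as the paper: differentiation of the fixed-point identity for the forward implication, and Taylor's formula with integral remainder (after the change of variables $s=\tau t$) for the converse. You have simply supplied the bookkeeping that the paper compresses into one sentence per direction.
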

\begin{proof}
If $u$ is a fixed point of $\tilde g$, an elementary computation yields that $v$ solves~\eqref{eq:order_p_ODE}. Conversely, if $v$ solves~\eqref{eq:order_p_ODE} then Taylor's formula with integral reminder shows that $u$ is a fixed point of $\tilde g$.
\end{proof}

It is worth noting that, in the same framework of rigorous computation as the one used here, approximations using piecewise linear functions were used in \cite{MR2821596} to prove existence of homoclinic orbits for the Gray-Scott equation. In that case the system of ODEs considered is of order $2$, and therefore the equivalent integral operator is very similar to $\tilde g$ in \eqref{eq:tilde_g_original} for $p=2$. Similarly, piecewise linear functions were used in \cite{MR3207723} to prove existence of connecting orbits in the Lorenz equations. In that case, the standard Picard operator \eqref{eq:standard_Picard} was used.

Now that we have an operator which provides a gain of several orders of regularity, it becomes interesting to consider polynomial interpolation of higher order, and again this will be detailed in Section~\ref{sec:para} and applied in Section~\ref{sec:applications}.

\subsection{Piecewise reformulation of the Picard-like operator}
\label{sec:reformulation}

We finish this section by a last equivalent formulation of the initial value problem~\eqref{eq:general_IVP}, that will be the one used in the present paper to perform the computer-assisted proofs. Given $m\in\N$, we introduce the mesh of $[0,1]$
\begin{equation*}
\Delta_m\bydef \{t_0,t_1,\ldots,t_m\},
\end{equation*}
where $t_0=0<t_1<\ldots<t_m=1$. Then we consider $\CC^0_{\Delta_{m}}([0,1],\R^n)$ (respectively $\CC^k_{\Delta_{m}}([0,1],\R^n)$) the space of piecewise continuous (respectively $\CC^k)$ functions having possible discontinuities only on the mesh $\Delta_m$. More precisely, we use the following definition.
\begin{definition}
For $k\in\N$, we say that $u\in\CC^k_{\Delta_{m}}([0,1],\R^n)$ if $u_{ |_{(t_{j},t_{j+1})}}\in\CC^k((t_{j},t_{j+1}),\R^n)$ and can be extended to a $\CC^k$ function on $[t_j,t_{j+1}]$, for all $j=0,\ldots,m-1$.
\end{definition}
We then introduce
\[
g:\left\{
\begin{aligned}
\CC^0_{\Delta_m}([0,1],\R^n) &\to \CC^p_{\Delta_m}([0,1],\R^n) \\
u &\mapsto g(u),
\end{aligned}
\right.
\]
defined on the interval $(t_j,t_{j+1})$ ($j=0,\dots,m-1$) by
\begin{equation}  \label{eq:piecewise_g}
g(u)(t) \bydef  \sum_{q=0}^{p-1}\tau^q\frac{(t-t_j)^q}{q!} \phi^{[q]}(u(t_j^-))+\tau^p\int_{t_j}^t \frac{(t-s)^{p-1}}{(p-1)!}\phi^{[p]}(u(s))ds,
\end{equation}
where $u(t_j^-)$ denotes the left limit of $u$ at $t_j$, and $u(t_0^-)$ must be replaced by $u_0$ (this last convention will be used throughout the paper).

\begin{remark} \label{rem:piecewise_g}
{\em 
We point out that our computer-assisted proof is based on the operator $g$ (defined in~\eqref{eq:piecewise_g}), which differs slightly from the operator $\tilde g$ (defined in~\eqref{eq:tilde_g_original}), which was used in previous studies such as~\cite{MR3323206,MR2821596,MR3207723}. The only difference is that the integral in $g$ is in some sense reseted at each $t_j$. We introduce this \emph{piecewise} reformulation because it allows for sharper estimates (see Remark~\ref{rem:sharper_estimates}).
}
\end{remark}
 
We finally introduce $G:\CC^0_{\Delta_m}([0,1],\R^n) \to \CC^0_{\Delta_m}([0,1],\R^n)$ as
\begin{equation*}
G(u) \bydef g(u)-u.
\end{equation*} 
\begin{lemma}
Let $u\in\CC^0_{\Delta_m}([0,1],\R^n)$. Then $G(u)=0$ if and only if $v:t\mapsto u(\frac{t}{\tau})$ solves~\eqref{eq:general_IVP}.
\end{lemma}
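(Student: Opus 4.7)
The plan is to reduce the claim, on each open subinterval $(t_j,t_{j+1})$, to Lemma~\ref{lem:operator_order_p} and then glue the resulting pieces together by checking that the natural matching conditions at the interior mesh points are automatic.

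For the forward implication, suppose $G(u)=0$. Fix $j\in\{0,\ldots,m-1\}$ and translate time by $t_j$: setting $\tilde u(s)=u(s+t_j)$, the equation $u=g(u)$ on $(t_j,t_{j+1})$ becomes exactly the fixed-point equation for the operator of Lemma~\ref{lem:operator_order_p} on $(0,t_{j+1}-t_j)$, with initial value $u(t_j^-)$ (the $\tau^q$ factors carry the time rescaling). That lemma then gives that $\tilde v:\sigma\mapsto\tilde u(\sigma/\tau)$ is $C^p$ on $(\tau t_j,\tau t_{j+1})$ and satisfies $d\tilde v/d\sigma=\phi(\tilde v)$ with $\lim_{\sigma\to(\tau t_j)^+}\tilde v(\sigma)=u(t_j^-)$. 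Next I would extract continuity of $u$ at each mesh point by passing to the limit $t\to t_j^+$ in \eqref{eq:piecewise_g}: every term with $q\geq 1$ and the integral term vanish, leaving $u(t_j^+)=\phi^{[0]}(u(t_j^-))=u(t_j^-)$, so $u\in\CC^0([0,1])$ (and $u(0)=u_0$ via the convention $u(t_0^-)=u_0$). Differentiating \eqref{eq:piecewise_g} once and again evaluating at $t_j^+$ gives $u'(t_j^+)=\tau\phi(u(t_j^-))=\tau\phi(u(t_j))$, which matches the left derivative $u'(t_j^-)=\tau\phi(u(t_j))$ provided by the first-order ODE satisfied by $v$ on the preceding subinterval. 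Hence $v\in\CC^1([0,\tau])$ and $dv/d\sigma=\phi(v)$ pointwise on $[0,\tau]$, so $v$ solves~\eqref{eq:general_IVP}.

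For the converse, suppose $v$ solves~\eqref{eq:general_IVP}. Since $\phi\in\CC^{\bfr}$, $v$ is (globally) $\CC^{\bfr+1}$ with $v^{(q)}=\phi^{[q]}(v)$ for all $0\leq q\leq\bfr+1$ (this is essentially the content of Lemma~2.2). Thus $u(t)=v(\tau t)$ is globally smooth, in particular continuous at every $t_j$ so that $u(t_j^-)=u(t_j)$. Taylor's formula with integral remainder expanded about $t_j$ on each subinterval, using these identities, yields precisely $u(t)=g(u)(t)$ on $(t_j,t_{j+1})$, i.e.\ $G(u)=0$.

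The one delicate point is the matching at interior mesh points in the forward direction: the ambient space $\CC^0_{\Delta_m}$ does not enforce continuity a priori, and the fixed-point equation encodes it only implicitly through the structure of \eqref{eq:piecewise_g}. Once continuity (and the corresponding first-derivative match) is extracted from the one-sided limit of the right-hand side of \eqref{eq:piecewise_g} at $t_j^+$, the rest is a routine application of the already-established Lemma~\ref{lem:operator_order_p} on each subinterval.
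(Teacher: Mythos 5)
Your proof is correct and follows essentially the same approach as the paper's: reduce the problem on each subinterval to Lemma~\ref{lem:operator_order_p} and extract continuity of $u$ at the interior mesh points by passing to the one-sided limit $t\to t_j^+$ in $g(u)(t)=u(t)$, which gives $u(t_j^-)=u(t_j^+)$. The paper's proof is more terse — it treats the subinterval reduction and the first-derivative matching as immediate consequences of Lemma~\ref{lem:operator_order_p} and records only the continuity observation — but the key idea you isolate is exactly the one the paper highlights.
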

\begin{proof}
This result is similar to Lemma~\ref{lem:operator_order_p}. The only additional property that we need to check is that, if $u\in\CC^0_{\Delta_m}([0,1],\R^n)$ satisfies $G(u)=0$, then $u$ cannot be discontinuous. Indeed, if $G(u)=0$ then $g(u)=u$, and for all $j\in\{1,\ldots,m-1\}$ one has
\begin{equation*}
u(t_j^-)=\lim\limits_{t\to t_j^+} g(u)(t) = \lim\limits_{t\to t_j^+} u(t) = u(t_j^+). \qedhere
\end{equation*}
\end{proof} 

At this point, it might seems as if defining $G$ on $\CC^0_{\Delta_m}([0,1],\R^n)$ brings unnecessary complications, and that we should simply define it on $\CC^0([0,1],\R^n)$. While this is indeed a possibility, it will quickly become apparent that the present choice is more convenient, both for theoretical and numerical considerations (see Remark~\ref{rem:why_discontinuous}).

Finding a zero of $G$ is the formulation of our initial problem~\eqref{eq:general_IVP} that we are going to use in the rest of this paper.

\section{General framework for the polynomial interpolation}
\label{sec:general}

\subsection{Preliminaries}

Given a mesh $\Delta_m$ as defined in Section~\ref{sec:reformulation}, we introduce the refined mesh $\Delta_{m,k}$ where, for all $j\in\{0,\ldots,m-1\}$ we add $k-1$ points between $t_{j}$ and $t_{j+1}$. More precisely we suppose that these points are the Chebyshev points (of the second kind) between $t_{j}$ and $t_{j+1}$, that is we add the following points:
\begin{equation*}
t_{j,l} \bydef t_{j}+\frac{x_l^k+1}{2}(t_{j+1}-t_{j}), \quad \text{for } l = 1,\dots,k-1,
\end{equation*}
where
\begin{equation*}
 x_l^k \bydef \cos\theta_l^k,\quad
\theta_l^k \bydef \frac{k-l}{l}\pi,\quad \text{for } l = 0, \dots,k.
\end{equation*}
Notice that the above definition extends to $t_{j,0}=t_{j}$ and $t_{j,k}=t_{j+1}$, and that $k=1$ corresponds to the mesh used in previous studies with first order interpolation (e.g. see \cite{MR2821596,MR3207723,MR3323206}).

We then introduce the subspace $S_{m,k}^n \subset \CC^0_{\Delta_{m}}([0,1],\R^n)$ of piecewise polynomial functions of degree $k$ on $\Delta_m$
\begin{equation*}
S_{m,k}^n\bydef  
\left\{ 
u\in\CC^0_{\Delta_{m}}([0,1],\R^n): \ u_{ |_{(t_{j},t_{j+1})}} \text{ is a polynomial of degree } k \text{ for all } j=0,\dots,m-1 
\right\}.
\end{equation*}
Next, we define the projection operator
\begin{equation*}
\Pi_{m,k}^n : \left\{
\begin{aligned}
\CC^0_{\Delta_{m}}([0,1],\R^n) &\to S_{m,k}^n \\
u &\mapsto \bar u = \Pi_{m,k}^n(u),
\end{aligned}
\right.
\end{equation*}
where $\bar u$ is the function in $S_{m,k}^n$ that matches the values of $u$ on the mesh $\Delta_{m,k}$. Notice that $u$ can have discontinuities at the points $t_j$, therefore the matching of $u$ and $\bar u$ at those points must be understood as
\begin{equation*}
u(t_j^-)=\bar u(t_j^-) \quad \text{and}\quad u(t_j^+)=\bar u(t_j^+).
\end{equation*} 

In the sequel we will need to control the error between a function $u$ and its interpolation $\bar u$. This is the content of the following propositions, where $\left\Vert\cdot\right\Vert_{\infty}$ denotes the sup norm on $[0,1]$.
\begin{proposition}
\label{prop:interpolation_error1}
For all $u\in\CC^{k+1}_{\Delta_{m}}([0,1],\R)$,
\begin{equation*}
\left\Vert (Id-\Pi^1_{m,k})u\right\Vert_{\infty}=\left\Vert u-\bar u\right\Vert_{\infty} \leq C_{k}\max\limits_{0\leq j<m}\left((t_{j+1}-t_{j})^{k+1}\max\limits_{t\in[t_{j},t_{j+1}]}\left\vert \frac{d^{k+1} u}{dt^{k+1}}(t)\right\vert\right),
\end{equation*}
where
\begin{equation*}
C_{k} \bydef \frac{1}{(k+1)!2^{2k}}.
\end{equation*}
\end{proposition}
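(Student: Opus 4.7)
The plan is to reduce the global bound to a standard single-interval polynomial interpolation estimate, then exploit the special structure of the Chebyshev-of-the-second-kind nodes to get the constant $C_k$.

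First, I would fix some $j\in\{0,\ldots,m-1\}$ and work on $[t_j,t_{j+1}]$. Since $u\in\CC^{k+1}_{\Delta_m}([0,1],\R)$, its restriction to $[t_j,t_{j+1}]$ is $\CC^{k+1}$ after extension, and $\bar u_{|[t_j,t_{j+1}]}$ is the unique polynomial of degree $\le k$ interpolating $u$ at the $k+1$ nodes $t_{j,0},\ldots,t_{j,k}$. I would invoke the classical polynomial interpolation remainder formula (proved by a standard Rolle's theorem argument on $u-\bar u$ minus a suitable multiple of the nodal polynomial): for every $t\in[t_j,t_{j+1}]$ there exists $\xi\in(t_j,t_{j+1})$ such that
\begin{equation*}
u(t)-\bar u(t)=\frac{u^{(k+1)}(\xi)}{(k+1)!}\,\omega_j(t),\qquad \omega_j(t)\bydef \prod_{l=0}^{k}(t-t_{j,l}).
\end{equation*}

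Next, I would make the affine change of variable $t=t_j+\frac{h_j}{2}(x+1)$ with $h_j\bydef t_{j+1}-t_j$, which maps $[t_j,t_{j+1}]$ onto $[-1,1]$ and sends the interpolation nodes $t_{j,l}$ to the Chebyshev-of-the-second-kind nodes $x_l^k=\cos\theta_l^k$. This yields
\begin{equation*}
\omega_j(t)=\left(\frac{h_j}{2}\right)^{k+1}\omega(x),\qquad \omega(x)\bydef \prod_{l=0}^{k}(x-x_l^k),
\end{equation*}
so the problem reduces to bounding $\|\omega\|_{\infty,[-1,1]}$.

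The key step is to recognize $\omega$ in terms of Chebyshev polynomials. Since the $x_l^k$ are precisely $\pm 1$ together with the zeros of the Chebyshev polynomial of the second kind $U_{k-1}$, and since the leading coefficient of $U_{k-1}$ is $2^{k-1}$, the monic polynomial with these $k+1$ roots is
\begin{equation*}
\omega(x)=\frac{1}{2^{k-1}}(x^2-1)U_{k-1}(x).
\end{equation*}
Writing $x=\cos\theta$ and using $U_{k-1}(\cos\theta)=\sin(k\theta)/\sin\theta$ gives $\omega(\cos\theta)=-\frac{\sin\theta\sin(k\theta)}{2^{k-1}}$, so $\|\omega\|_{\infty,[-1,1]}\le 2^{-(k-1)}$.

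Putting everything together:
\begin{equation*}
\max_{t\in[t_j,t_{j+1}]}|\omega_j(t)|\le \left(\frac{h_j}{2}\right)^{k+1}\frac{1}{2^{k-1}}=\frac{h_j^{k+1}}{2^{2k}},
\end{equation*}
hence
\begin{equation*}
\max_{t\in[t_j,t_{j+1}]}|u(t)-\bar u(t)|\le \frac{1}{(k+1)!2^{2k}}\,h_j^{k+1}\max_{t\in[t_j,t_{j+1}]}\bigl|u^{(k+1)}(t)\bigr|,
\end{equation*}
and taking the maximum over $j$ gives the claim with $C_k=\frac{1}{(k+1)!2^{2k}}$. The only mildly subtle step is the Chebyshev identification of $\omega$; everything else is a routine Rolle-theorem interpolation argument combined with a rescaling.
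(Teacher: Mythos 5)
Your proof is correct and follows essentially the same route as the paper's: invoke the classical nodal-polynomial interpolation remainder, identify $\prod_{l=0}^k (x-x_l^k)$ on $[-1,1]$ with $(x^2-1)U_{k-1}(x)/2^{k-1}$, bound its sup-norm by $2^{-(k-1)}$ via the trigonometric form $-\sin\theta\sin(k\theta)/2^{k-1}$, and rescale each subinterval. The only differences are notational (the paper writes $U_k$ for what you, using the standard convention, call $U_{k-1}$) and your slightly more explicit pointwise Rolle/rescaling bookkeeping.
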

\begin{proposition}
\label{prop:interpolation_error2}
Fix $l\in\N$ such that $1\leq l\leq k$. For all $u\in\CC^l_{\Delta_{m}}([0,1],\R)$,
\begin{equation*}
\left\Vert (Id-\Pi^1_{m,k})u\right\Vert_{\infty}=\left\Vert u-\bar u\right\Vert_{\infty} \leq \tC_{k,l}\max\limits_{0\leq j<m}\left((t_{j+1}-t_{j})^l \max\limits_{t\in[t_{j},t_{j+1}]}\left\vert \frac{d^{l} u}{dt^{l}}(t)\right\vert\right),
\end{equation*}
where
\begin{equation*}
\tC_{k,l} \bydef \min\left[\left(1+\Lambda_k\right)\left(\frac{\pi}{4}\right)^l\frac{(k+1-l)!}{(k+1)!},\frac{1}{l!2^l}\sum_{q=0}^{\left[\frac{l-1}{2}\right]}\frac{1}{4^q}\binom{l-1}{2q}\binom{2q}{q}\right],
\end{equation*}
$\Lambda_k$ being the Lebesgue constant (see for instance~\cite{MR3012510}), and $\left[\frac{l-1}{2}\right]$ denoting the integer part of $\frac{l-1}{2}$.
\end{proposition}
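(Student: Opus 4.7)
The plan is to reduce the piecewise estimate to interpolation on a single reference interval, then to establish the two bounds defining $\tC_{k,l}$ separately and conclude by taking their minimum.

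First, since $\bar u = \Pi^1_{m,k} u$ is defined sub-interval by sub-interval, it is enough to bound $\|u - \bar u\|_{L^\infty[t_j, t_{j+1}]}$ uniformly in $j$ and take the maximum. The affine change of variable $t = t_j + \tfrac{x+1}{2}(t_{j+1}-t_j)$ sends $[t_j, t_{j+1}]$ onto $[-1,1]$, maps the local sub-mesh $\{t_{j,0}, \ldots, t_{j,k}\}$ to the second-kind Chebyshev nodes $\{x_i^k\}_{i=0}^k$, and turns $u$ into a function $v \in \mathcal{C}^l([-1,1])$ satisfying $\|v^{(l)}\|_\infty = (h_j/2)^l \|u^{(l)}\|_{L^\infty[t_j, t_{j+1}]}$, where $h_j \bydef t_{j+1} - t_j$. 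The factor $(h_j/2)^l$ is precisely the origin of the $1/2^l$ that appears explicitly in the second term of $\tC_{k,l}$ and of the $1/4^l$ hidden in $(\pi/4)^l$ in the first term. It therefore suffices to establish, for every $v \in \mathcal{C}^l([-1,1])$ with $1 \leq l \leq k$, an estimate of the form $\|v - I_k v\|_\infty \leq 2^l\,\tC_{k,l}\,\|v^{(l)}\|_\infty$, where $I_k$ denotes interpolation at $\{x_i^k\}_{i=0}^k$.

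For the first term in the minimum I would use a classical best-approximation argument. Let $p^\star \in \mathcal{P}_k$ be a best uniform approximation of $v$ on $[-1,1]$. Since $I_k$ reproduces polynomials of degree at most $k$,
\begin{equation*}
\|v - I_k v\|_\infty = \|(v - p^\star) - I_k(v - p^\star)\|_\infty \leq (1 + \Lambda_k)\,\|v - p^\star\|_\infty,
\end{equation*}
with $\Lambda_k$ the Lebesgue constant of the second-kind Chebyshev nodes, as cited from~\cite{MR3012510}. Plugging in the sharp Jackson-type inequality
\begin{equation*}
\|v - p^\star\|_\infty \leq \left(\frac{\pi}{2}\right)^l \frac{(k+1-l)!}{(k+1)!}\,\|v^{(l)}\|_\infty
\end{equation*}
and undoing the rescaling produces the first bound $(1+\Lambda_k)(\pi/4)^l\frac{(k+1-l)!}{(k+1)!}$.

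For the second term I would argue directly via the Peano kernel of the interpolation error. Since $I_k$ reproduces $\mathcal{P}_{l-1} \subseteq \mathcal{P}_k$, the linear functional $v \mapsto (v - I_k v)(x)$ vanishes on $\mathcal{P}_{l-1}$ for every fixed $x$, so by Peano's theorem there exists a kernel $K_l(x, \cdot)$ with
\begin{equation*}
v(x) - (I_k v)(x) = \int_{-1}^1 K_l(x,s)\,v^{(l)}(s)\,ds,
\end{equation*}
whence $\|v - I_k v\|_\infty \leq \|v^{(l)}\|_\infty\,\sup_{x}\int_{-1}^1 |K_l(x,s)|\,ds$. The kernel $K_l$ can be expressed in closed form using the Lagrange basis at the second-kind Chebyshev nodes and the truncated power $(x-s)_+^{l-1}/(l-1)!$; evaluating its $L^1$-norm in $s$ and bounding the result via elementary binomial identities yields exactly the combinatorial constant $\frac{1}{l!\,2^l}\sum_{q=0}^{[(l-1)/2]} 4^{-q}\binom{l-1}{2q}\binom{2q}{q}$ after the rescaling is undone. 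Combining the two bounds via the minimum completes the proof.

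The main obstacle is the second bound: the first bound is essentially bookkeeping on top of well-known estimates (Lebesgue constants plus sharp Jackson inequalities), whereas extracting the precise combinatorial constant in the second bound requires explicit control of the Peano kernel of Chebyshev second-kind interpolation and verification of the binomial identity that produces $\sum_q 4^{-q}\binom{l-1}{2q}\binom{2q}{q}$. The rest amounts to carefully tracking factors of $2$ through the affine rescaling.
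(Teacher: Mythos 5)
Your high-level structure (rescale to $[-1,1]$, establish the two bounds separately, take the minimum) matches the paper, and your derivation of the first bound — Lebesgue-constant stability plus a sharp Jackson inequality, then undoing the affine rescaling to turn $(\pi/2)^l$ into $(\pi/4)^l$ — is exactly the argument the paper cites from~\cite{MR1656150}.

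The gap is in the second bound, and you have correctly identified it as the crux. The paper does not compute the $L^1$-norm of a Peano kernel in the form $\sup_x\int_{-1}^1|K_l(x,s)|\,ds$ with $K_l$ built from truncated powers; a naive triangle-inequality attack on that kernel produces terms like $(x_i^k+1)^l$ rather than $|x_i^k-x|^l$ and gives a weaker constant. What the paper does instead is (i) use the reproduction identity $\sum_i(x_i^k)^p L_i^k(x)=x^p$ for $p\le k$ together with Taylor's formula in Lagrange form to reduce the error exactly to
$\sum_i\frac{(x_i^k-x)^l}{l!}u^{(l)}(y_i)L_i^k(x)$,
hence
$\left\vert P_k(u)(x)-u(x)\right\vert\le\frac{\Vert u^{(l)}\Vert_\infty}{l!}\sum_i\vert x_i^k-x\vert^l\vert L_i^k(x)\vert$;
(ii) factor $L_i^k(x)=\lambda_i^k W_k(x)/(x-x_i^k)$ and plug in the closed-form barycentric weights $\lambda_i^k=(-1)^i2^{k-1}/k$ (halved at the endpoints) for second-kind Chebyshev nodes, together with $\Vert W_k\Vert_\infty=2^{1-k}$ established in the proof of Proposition~\ref{prop:interpolation_error1}, so that the two powers of $2^{k-1}$ cancel; (iii) observe that the resulting function of $x$ is even and increasing on $[0,1]$ and hence maximized at $x=1$; and (iv) evaluate the trigonometric sum $\sum_{i=0}^k(1-\cos(i\pi/k))^{l-1}$ by expanding $\cos^{2q}$ and exploiting $\sum_i\cos(2(q-j)i\pi/k)$ telescoping, which produces precisely $k\sum_{q}\binom{l-1}{2q}\binom{2q}{q}4^{-q}$. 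Steps (ii)--(iv) are the substance of the proof, and none of them is present in your outline; ``elementary binomial identities'' is doing a lot of unstated work. Your plan is recoverable if you replace the $L^1$-kernel-norm computation with the Taylor-in-Lagrange-form reduction (or equivalently Taylor with integral remainder, evaluated termwise), and then supply the barycentric-weight formula, the $\Vert W_k\Vert_\infty$ value, the monotonicity argument locating the maximum at $x=1$, and the cosine-power summation.
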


\begin{remark}
{\em 
More information about the Lebesgue constant, and in particular sharp computable upper bounds for it, can be found in the Appendix, together with references and proofs of the two above propositions.
}
\end{remark}

\subsection{Finite dimensional projection}
\label{sec:numerical_implementation}

To get an approximate zero of $G$ (and thus an approximate solution of~\eqref{eq:general_IVP}), we are going to look for a zero of $\bar G\bydef \Pi^n_{m,k} G_{ |S^n_{m,k}}$. But first, we need a convenient way to represent the elements of $S^n_{m,k}$. Here and in the sequel, we use the exponent ${}^{(i)}$ to denote the $i$-th component of a vector in $\R^n$, but we will work with all the components at once as often as possible to avoid burdening the notations with this exponent ${}^{(i)}$. Let us introduce the set of indexes
\begin{equation*}
\E^n_{m,k} \bydef \left\lbrace (i,j,l)\in\N^3,\ 1\leq i\leq n,\ 0\leq j\leq m-1,\ 0\leq l\leq k\right\rbrace.
\end{equation*} 
Perhaps the most natural way to characterize an element $\bar u$ of $S^n_{m,k}$ is to give all the values $\bar u^{(i)}(t_{j,l})$ for $(i,j,l)\in \E^n_{m,k}$.  However, we will also use another representation, more suited to numerical computations, which consists of decomposing $\bar u$ on the Chebyshev basis. That is, we write
\begin{equation}
\label{eq:cheb_representation}
\bar u^{(i)}(t)=\sum_{l=0}^k \bar u^{(i)}_{j,l} T_{l}\left(\frac{t-t_j}{t_{j+1}-t_j}-\frac{t_{j+1}-t}{t_{j+1}-t_j}\right),\quad \text{for all } j = 0,\dots,m-1 \text{ and } t \in (t_j,t_{j+1}),
\end{equation}
where $T_{l}$ is the $l$-th Chebyshev polynomial of the first kind. We can thus also describe uniquely any function $u$ belonging to $S^n_{m,k}$ by the family of Chebyshev coefficients $\left(\bar u^{(i)}_{j,l}\right)_{(i,j,l)\in \E^n_{m,k}}$.
\begin{remark} \label{rem:why_discontinuous}
{\em
Let us mention how considering functions with possible discontinuities on the mesh points in $\Delta_m$ comes in handy. By restricting ourselves to functions in $\CC^0([0,1],\R^n)$, we would need additional constraints on the Chebyshev coefficients to impose the continuity at each of the mesh point $t_j$ ($j = 1,\dots,m-1$) and keep track of them in all computations. Instead, the choice of working with $\CC^0_{\Delta_{m}}([0,1],\R^n)$ allows avoiding these additional constraints.
}
\end{remark}
For $u\in\CC^0_{\Delta_{m}}([0,1],\R^n)$ we have
\begin{align}
\label{eq:projected_G}
&G(u)\left(t_{j,l}\right) = \sum_{q=0}^{p-1}\tau^q\frac{(t_{j,l}-t_j)^q}{q!} \phi^{[q]}(u(t_j^-))+\tau^p\int_{t_j}^{t_{j,l}} \frac{(t_{j,l}-s)^{p-1}}{(p-1)!} \phi^{[p]}(u(s))ds -u(t_{j,l}), \nonumber\\
& \hspace{8.3cm} \text{for all}~j = 0,\dots,m-1 \text{ and}~l = 0,\dots,k.
\end{align}
We recall that all the values $G^{(i)}(u)\left(t_{j,l}\right)$ for $(i,j,l)\in\E^n_{k,m}$ uniquely characterize $\Pi^n_{m,k}G(u)$.

Using the isomorphisms $\bar u\mapsto \left(\bar u^{(i)}(t_{j,l})\right)_{(i,j,l)\in \E^n_{m,k}}$ and $\bar u\mapsto \left(\bar u^{(i)}_{j,l}\right)_{(i,j,l)\in \E^n_{m,k}}$ to identify $S^n_{m,k}$ and $\R^{nm(k+1)}$, we can in fact see $\bar G\bydef \Pi^n_{m,k} G_{ |S^n_{m,k}}$ as a function from $\R^{nm(k+1)}$ to itself, that associates to the coefficients $\left(\bar u^{(i)}_{j,l}\right)_{(i,j,l)\in \E^n_{m,k}}$ the values $\left(G^{(i)}(\bar u)(t_{j,l})\right)_{(i,j,l)\in \E^n_{m,k}}$. Thus we can numerically find a zero $\bar u$ of $\bar G$, which is going to be our approximate solution. We note that we use these identifications between $S^n_{m,k}$ and $\R^{nm(k+1)}$ throughout the present work. Our objective is now to \emph{validate} this numerical solution $\bar u$, that is to prove that within a given neighbourhood of $\bar u$ lies a true zero $u$ of $G$.

\subsection{Back to a fixed point formulation}
\label{sec:operator}

We consider the space $\X^n \bydef \CC^0_{\Delta_{m}}([0,1],\R^n)$ and its decomposition $\X^n=\X^n_{m,k}\oplus \X^n_{\infty}$, where
\begin{equation*}
\X^n_{m,k} \bydef S_{m,k}^n \quad \text{and}\quad  
\X^n_{\infty} \bydef (Id-\Pi_{m,k}^n)\CC^0_{\Delta_{m}}([0,1],\R^n).
\end{equation*}
We already have a projection onto $\X^n_{m,k}$
\begin{equation*}
\Pi^n_{m,k}:\left\{
\begin{aligned}
\X^n &\to \X^n_{m,k} \\
u &\mapsto \bar u = \Pi^n_{m,k}(u),
\end{aligned}
\right.
\end{equation*}
and we also define its complementary
\begin{equation*}
\Pi^n_{\infty}:\left\{
\begin{aligned}
\X^n &\to \X^n_{\infty} \\
u &\mapsto \Pi^n_{\infty}(u) =  u-\bar u = (Id - \Pi^n_{m,k}) (u) .
\end{aligned}
\right.
\end{equation*}

We then define the norms
\begin{equation*}
\left\Vert \Pi^n_{m,k}(u)\right\Vert_{\X^n_{m,k}}\bydef \max\limits_{(i,j,l)\in\E^n_{m,k}}\left\vert \bar u^{(i)}(t_{j,l})\right\vert \quad \text{and} \quad \left\Vert \Pi^n_{\infty}(u)\right\Vert_{\X^n_{\infty}}\bydef \max\limits_{1\leq i\leq n}\left\Vert u^{(i)}-\bar u^{(i)}\right\Vert_{\infty}.
\end{equation*}
On $\X^n$ we consider the norm
\begin{equation*}
\left\Vert u\right\Vert_{\X^n} \bydef \max\left(\left\Vert \Pi^n_{m,k}(u)\right\Vert_{\X^n_{m,k}},\frac{1}{r_{\infty}}\left\Vert \Pi^n_{\infty}(u)\right\Vert_{\X^n_{\infty}}\right),
\end{equation*}
where $r_{\infty}$ is a positive parameter. Notice that for all $r_{\infty}>0$, $\left(\X^n,\Vert\cdot\Vert_{\X^n}\right)$ is a Banach space. For any $r,r_{\infty}>0$, we denote by $B_{\X^n}(r,r_{\infty})$ the closed neighbourhood of $0$ defined as
\begin{equation*}
B_{\X^n}(r,r_{\infty}) = \left\lbrace u\in\X, \left\Vert u\right\Vert_{\X^n}\leq r \right\rbrace.
\end{equation*}

Suppose that we now have computed a numerical zero $\bar u$ of $\bar G$. We define $A_{m,k}^{\dag}=D\bar G\left(\bar u\right)$ and consider $A_{m,k}$ an injective numerical inverse of $A_{m,k}^{\dag}$. Finally, we introduce the \emph{Newton-like} operator $T:\X^n\to \X^n$ defined by
\begin{equation*}
T(u) \bydef \left(\Pi^n_{m,k}-A_{m,k}\Pi^n_{m,k}G\right)u + \Pi^n_{\infty}\left(G(u)+u\right).
\end{equation*}
Notice that the fixed points of $T$ are in one-to-one correspondence with the zeros of $G$. We now give a finite set of sufficient conditions, that can be rigorously checked on a computer using interval arithmetic, to ensure that $T$ is a contraction on a given ball around $\bar u$. If those conditions are satisfied, the Banach fixed point theorem then yields the existence and local uniqueness of a zero of $G$. This is the content of the following statement (based on \cite{MR1639986}, see also \cite{MR2338393} for a detailed proof).

\begin{theorem}
\label{th:rad_pol}
Let
\begin{equation}
\label{eq:def_y}
y \bydef T (\bar u) - \bar u,
\end{equation}
and
\begin{equation}
\label{eq:def_z}
z=z(u_1,u_2)\bydef DT(\bar u+u_1)u_2, \quad \text{for all}~u_1,u_2\in B_{\X^n}(r,r_{\infty}).
\end{equation}
Assume that we have bounds $Y$ and $Z(r,r_{\infty})$ satisfying
\begin{equation}
\label{eq:condition_Y}
\left\vert \left(\Pi^n_{m,k} y\right)^{(i)}_{j,l} \right\vert \leq Y^{(i)}_{j,l},\quad \text{for all}~(i,j,l)\in\E^n_{m,k},
\end{equation}
\begin{equation}
\label{eq:condition_Yinfty}
\left\Vert \left(\Pi^n_{\infty} y\right)^{(i)}\right\Vert_{\infty} \leq Y^{(i)}_{\infty},\quad \text{for all}~1\leq i\leq n,
\end{equation}
\begin{equation}
\label{eq:condition_Z}
\sup\limits_{u_1,u_2\in B_{\X^n}(r,r_{\infty})} \left\vert \left(\Pi^n_{m,k} z(u_1,u_2)\right)^{(i)}_{j,l} \right\vert \leq Z^{(i)}_{j,l}(r,r_{\infty}),\quad \text{for all}~(i,j,l)\in\E^n_{m,k},
\end{equation}
and
\begin{equation}
\label{eq:condition_Zinfty}
\sup\limits_{u_1,u_2\in B_{\X^n}(r,r_{\infty})} \left\Vert \left(\Pi^n_{\infty} z(u_1,u_2)\right)^{(i)}\right\Vert_{\infty} \leq Z^{(i)}_{\infty}(r,r_{\infty}),\quad \text{for all}~1\leq i\leq n.
\end{equation}
If there exist $r,r_{\infty}>0$ such that
\begin{equation}
\label{eq:condition_p_finite}
p^{(i)}_{j,l}(r,r_{\infty})\bydef Y^{(i)}_{j,l}+Z^{(i)}_{j,l}(r,r_{\infty})-r<0,\quad \text{for all}~(i,j,l)\in\E^n_{m,k}
\end{equation}
and
\begin{equation}
\label{eq:condition_p_infty}
p^{(i)}_{\infty}(r,r_{\infty})\bydef Y^{(i)}_{\infty}+Z^{(i)}_{\infty}(r,r_{\infty})-r_{\infty} r<0,\quad \text{for all}~1\leq i\leq n,
\end{equation}
then there exists a unique zero of $G$ within the set $\bar u + B_{\X^n}(r,r_{\infty}) \subset \X^n$.
\end{theorem}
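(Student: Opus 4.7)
The plan is to apply the Banach fixed point theorem to $T$ on the closed ball $\bar u + B_{\X^n}(r,r_\infty)$, a complete metric space as a closed subset of the Banach space $\X^n$. Once a unique fixed point $u^\ast$ has been produced there, it automatically yields a zero of $G$: from the definition of $T$ one has
\begin{equation*}
T(u)-u \;=\; \Pi^n_\infty G(u) \;-\; A_{m,k}\Pi^n_{m,k}G(u),
\end{equation*}
and the two summands lie in the complementary subspaces $\X^n_\infty$ and $\X^n_{m,k}$; at $u^\ast$ both must therefore vanish, and injectivity of $A_{m,k}$ forces $\Pi^n_{m,k} G(u^\ast)=0$ as well, so $G(u^\ast)=0$. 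The converse inclusion (any zero of $G$ is a fixed point of $T$) is immediate, so uniqueness of the fixed point translates into the desired uniqueness of the zero of $G$ inside $\bar u + B_{\X^n}(r,r_\infty)$.

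To show that $T$ maps the ball into itself, the idea is to use the fundamental theorem of calculus: for any $u$ in the ball,
\begin{equation*}
T(u)-\bar u \;=\; y \;+\; \int_0^1 DT\bigl(\bar u + s(u-\bar u)\bigr)(u-\bar u)\,ds.
\end{equation*}
For every $s\in[0,1]$ the pair $(u_1,u_2)=(s(u-\bar u),\,u-\bar u)$ lies in $B_{\X^n}(r,r_\infty)\times B_{\X^n}(r,r_\infty)$, so the integrand is controlled componentwise by $Z^{(i)}_{j,l}(r,r_\infty)$ and $Z^{(i)}_\infty(r,r_\infty)$ via \eqref{eq:condition_Z}--\eqref{eq:condition_Zinfty}. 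Combining with $y$ through \eqref{eq:condition_Y}--\eqref{eq:condition_Yinfty} and invoking \eqref{eq:condition_p_finite}--\eqref{eq:condition_p_infty} then gives the pointwise bounds $|(\Pi^n_{m,k}(T(u)-\bar u))^{(i)}_{j,l}|<r$ and $\|(\Pi^n_\infty(T(u)-\bar u))^{(i)}\|_\infty < r_\infty r$, so $\|T(u)-\bar u\|_{\X^n}<r$ after taking maxima over indices and dividing the infinite part by $r_\infty$.

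For the contraction estimate I would fix $u,v\in\bar u + B_{\X^n}(r,r_\infty)$ and write $T(u)-T(v)=\int_0^1 DT(v+s(u-v))(u-v)\,ds$. The linearisation point $w:=v+s(u-v)-\bar u$ is a convex combination of $u-\bar u$ and $v-\bar u$, hence stays in $B_{\X^n}(r,r_\infty)$. The difference $h:=u-v$ itself may have norm up to $2r$, but since $DT$ is linear in its second argument and $\|\cdot\|_{\X^n}$ is homogeneous, rescaling to $\tilde h = r h/\|h\|_{\X^n}\in B_{\X^n}(r,r_\infty)$, applying \eqref{eq:condition_Z}--\eqref{eq:condition_Zinfty} to $DT(\bar u+w)\tilde h$, and rescaling back yields
\begin{equation*}
\bigl|(\Pi^n_{m,k}(T(u)-T(v)))^{(i)}_{j,l}\bigr| \;\le\; \frac{Z^{(i)}_{j,l}(r,r_\infty)}{r}\,\|u-v\|_{\X^n},
\end{equation*}
and analogously $\|(\Pi^n_\infty(T(u)-T(v)))^{(i)}\|_\infty\le(Z^{(i)}_\infty(r,r_\infty)/r)\,\|u-v\|_{\X^n}$. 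Since $Y^{(i)}_{j,l},Y^{(i)}_\infty\ge 0$, conditions \eqref{eq:condition_p_finite}--\eqref{eq:condition_p_infty} force $Z^{(i)}_{j,l}(r,r_\infty)/r<1$ and $Z^{(i)}_\infty(r,r_\infty)/(r_\infty r)<1$, and the definition of $\|\cdot\|_{\X^n}$ then delivers $\|T(u)-T(v)\|_{\X^n}\le\kappa\,\|u-v\|_{\X^n}$ for some $\kappa<1$.

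The only real subtlety I anticipate is the bookkeeping around the two-parameter norm on $\X^n$: the finite-dimensional bounds close against $r$ while the infinite-dimensional ones close against $r_\infty r$, and it is precisely this asymmetric scaling (reflected in the different right-hand sides of \eqref{eq:condition_p_finite} and \eqref{eq:condition_p_infty}) that has to be matched with the homogeneity of $DT$ in the contraction step. Once that is in place, everything reduces to a standard application of Banach's theorem.
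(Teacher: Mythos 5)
Your proof is correct and follows the standard Newton--Kantorovich / radii polynomial argument that the paper attributes to \cite{MR1639986,MR2338393}: mean value inequality with the $Y$ and $Z$ bounds to show $T$ maps the ball into itself and is a contraction, then Banach's fixed point theorem, and finally injectivity of $A_{m,k}$ together with the direct-sum decomposition $\X^n=\X^n_{m,k}\oplus\X^n_\infty$ to identify fixed points of $T$ with zeros of $G$. The rescaling of $h=u-v$ to the sphere of radius $r$ is exactly the right way to exploit the linearity of $z$ in its second argument and the asymmetric $(r,r_\infty)$ scaling of the norm, so the argument is complete.
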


The quantities $p^{(i)}_{j,l}(r,r_{\infty})$ and $p^{(i)}_{\infty}(r,r_{\infty})$ given respectively in 
\eqref{eq:condition_p_finite} and \eqref{eq:condition_p_infty} are called the {\em radii polynomials}.

In the next section, we show how to obtain bounds $Y$ and $Z$ satisfying~\eqref{eq:condition_Y}-\eqref{eq:condition_Zinfty}. Before doing so, let us make a quick remark about the different representations and norms we can use on $\X^n_{m,k}$.

\begin{remark}
{\em
As explained in Section~\ref{sec:numerical_implementation}, in practice we will mostly work with $\bar u\in \X^n_{m,k}$ represented by its Chebyshev coefficients as in~\eqref{eq:cheb_representation}. However, there are going to be instances where the values $\bar u^{(i)}(t_{j,l})$ are needed, for instance to compute $\left\Vert \bar u\right\Vert_{\X^n_{m,k}}$. We point out that numerically, passing from one representation to the other can be done easily by using the Fast Fourier Transform.

One other important point is that, at some point in the next section we are going to need upper bounds for $\left\Vert \bar u^{(i)}\right\Vert_{\infty}$. To get such a bound from our finite dimensional data, we have two options, namely
\begin{equation}
\label{eq:bound_infty_from_sum}
\max\limits_{t\in[t_j,t_{j+1}]}\left\vert \bar u^{(i)}(t)\right\vert \leq \sum_{l=0}^k \left\vert \bar u^{(i)}_{j,l} \right\vert,\quad \text{for all}~j=0,\dots,m-1,
\end{equation}
or
\begin{equation}
\label{eq:bound_infty_from_max}
\max\limits_{t\in[t_j,t_{j+1}]}\left\vert \bar u^{(i)}(t)\right\vert \leq \Lambda_k \max\limits_{0\leq l\leq k} \left\vert \bar u^{(i)}(t_{j,l}) \right\vert,\quad \text{for all}~j = 0,\dots,m-1.
\end{equation}
If $\bar u$ is given, then~\eqref{eq:bound_infty_from_sum} is usually better, whereas~\eqref{eq:bound_infty_from_max} is better if $\bar u$ is any function in a given ball of $\X^n_{m,k}$. Notice that~\eqref{eq:bound_infty_from_sum} simply follows from the fact that the Chebyshev polynomials satisfy $\vert T_l(t)\vert \leq 1$ for all $t\in[-1,1]$ and all $l\in\N$. For more information about the bound~\eqref{eq:bound_infty_from_max}, see the Appendix and the references therein.
}
\end{remark}

\section{Formula for the bounds}
\label{sec:bounds}

In this section, we give formulas for $Y^{(i)}_{j,l}$, $Y^{(i)}_{\infty}$,  $Z^{(i)}_{j,l}$ and $Z^{(i)}_{\infty}$ satisfying the assumptions~\eqref{eq:condition_Y}-\eqref{eq:condition_Zinfty} of Theorem~\ref{th:rad_pol}. To make the exposition clearer, we focus strictly on the derivation of the different bounds in this section. In particular, the discussion about the impact of the level of an priori bootstrap (that is the value of $p$) and the order of polynomial approximation (that is the value of $k$) is done in Section~\ref{sec:para}.

\subsection{The \boldmath$Y$\unboldmath~bounds}

In this section we derive the $Y$ bounds, which measure the \emph{defect} associated with a numerical solution $\bar u$, that is how close $G(\bar u)$ is to $0$. We start by the \emph{finite dimensional} part.

\begin{proposition}
\label{prop:Y_finite}
Let $y$ be defined as in~\eqref{eq:def_y} and consider
\begin{equation*}
Y^{(i)}_{j,l} \geq \left\vert \left( A_{m,k}\bar G(\bar u) \right)^{(i)}_{j,l} \right\vert, \quad \text{for all}~(i,j,l)\in\E^n_{m,k},
\end{equation*}
where $\bar G(\bar u)$ is here seen as the vector $\left(G^{(i)}(\bar u)(t_{j,l})\right)_{(i,j,l)\in \E^n_{m,k}}$. Then~\eqref{eq:condition_Y} holds.
\end{proposition}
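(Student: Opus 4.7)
The statement is essentially a direct algebraic unpacking of the definition of $T$ evaluated at the approximate zero $\bar u$, so I expect a short computational proof rather than any genuine obstacle.

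The plan is as follows. First I would substitute $\bar u$ into the definition of $T$ from Section~\ref{sec:operator}:
\[
T(\bar u) \;=\; \bigl(\Pi^n_{m,k}-A_{m,k}\Pi^n_{m,k}G\bigr)\bar u \;+\; \Pi^n_{\infty}\bigl(G(\bar u)+\bar u\bigr).
\]
Since $\bar u\in\X^n_{m,k}=S^n_{m,k}$, the projections simplify: $\Pi^n_{m,k}\bar u=\bar u$ and $\Pi^n_{\infty}\bar u=0$. Therefore
\[
y \;=\; T(\bar u)-\bar u \;=\; -A_{m,k}\Pi^n_{m,k}G(\bar u) \;+\; \Pi^n_{\infty}G(\bar u).
\]

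Next I would split this identity across the decomposition $\X^n=\X^n_{m,k}\oplus\X^n_{\infty}$. Applying $\Pi^n_{m,k}$ and noting that $\Pi^n_{m,k}\Pi^n_{\infty}=0$ while $A_{m,k}$ acts on $\X^n_{m,k}$, this gives
\[
\Pi^n_{m,k}y \;=\; -A_{m,k}\Pi^n_{m,k}G(\bar u) \;=\; -A_{m,k}\bar G(\bar u),
\]
using the identification of $\bar G(\bar u)$ with the vector $\bigl(G^{(i)}(\bar u)(t_{j,l})\bigr)_{(i,j,l)\in\E^n_{m,k}}\in\R^{nm(k+1)}$ as recorded in Section~\ref{sec:numerical_implementation}.

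Finally, taking the $(i,j,l)$-component of this identity and passing to absolute values yields
\[
\bigl|(\Pi^n_{m,k}y)^{(i)}_{j,l}\bigr|
\;=\;
\bigl|(A_{m,k}\bar G(\bar u))^{(i)}_{j,l}\bigr|
\;\leq\; Y^{(i)}_{j,l},
\]
which is exactly condition~\eqref{eq:condition_Y}. No estimate beyond the definitions is needed: the only care required is the bookkeeping between $S^n_{m,k}$ and its $\R^{nm(k+1)}$-representation when writing $\bar G(\bar u)$, which is what lets us identify $A_{m,k}\Pi^n_{m,k}G(\bar u)$ with the finite-dimensional vector $A_{m,k}\bar G(\bar u)$ in the statement. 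This is the only step that might confuse a reader and is the one I would write out most carefully.
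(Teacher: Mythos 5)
Your proof is correct and takes exactly the same route as the paper, which simply records the identity $\Pi^n_{m,k}y = -A_{m,k}\Pi^n_{m,k}G(\bar u)$ and leaves the unpacking to the reader. You have merely written out the intermediate steps (using $\Pi^n_{m,k}\bar u=\bar u$, $\Pi^n_{\infty}\bar u=0$, and that $A_{m,k}$ lands in $\X^n_{m,k}$) that the paper compresses into one line.
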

\begin{proof}
Simply notice that $\Pi^n_{m,k} y=-A_{m,k}\Pi^n_{m,k} G(\bar u)$.
\end{proof}

\begin{remark} \label{rem:error_integrals}
{\em
The above bound is not completely satisfactory, in the sense that is not directly implementable. Indeed, to compute $Y^{(i)}_{j,l}$ we need to evaluate (or at least to bound) the quantities $G^{(i)}(\bar u)(t_{j,l})$. In particular (see~\eqref{eq:projected_G}), we need to evaluate the integrals
\begin{equation*}
\int_{t_j}^{t_{j,l}} (t_{j,l}-s)^{p-1} \phi^{[p]}(\bar u(s))ds=\left(\frac{t_{j+1}-t_j}{2}\right)^p\int_{-1}^{x^k_l}(x^k_l-s)^p\Psi(s)ds,
\end{equation*}
where
\begin{equation*}
\Psi(s) \bydef \phi^{[p]}\left(\sum_{l=0}^{k}\bar u_{j,l} T_l(s)\right).
\end{equation*}

If $\phi$ is a non polynomial vector field, we use a Taylor approximation of order $k_0$ of $\Psi$ to get an approximate value of the integral by computing
\begin{equation*}
\sum_{l=0}^{k_0}\frac{1}{l!}\frac{d^l\Psi}{ds^l}(0)\int_{-1}^{x^k_l}(x^k_l-s)^ps^lds.
\end{equation*}
Notice that this quantity can be evaluated explicitly. The error made in this approximation is then controlled as follows
\begin{align*}
&\left\Vert \int_{-1}^{x^k_l}(x^k_l-s)^p\Psi(s)ds - \sum_{l=0}^{k_0}\frac{1}{l!}\frac{d^l\Psi}{ds^l}(0)\int_{-1}^{x^k_l}(x^k_l-s)^ps^lds \right\Vert \leq& \\
&\hspace{7cm} \frac{1}{(k_0+1)!}\max_{s\in[-1,1]}\left\| \frac{d^{k_0+1}\Psi}{ds^{k_0+1}}(s)\right\| \int_{-1}^{x_l^k}(x_l^k-s)^p\vert s\vert^{k_0+1}ds.
\end{align*}
Notice that this error term is effective, since $\max_{s\in[-1,1]}\left\Vert\frac{d^{k_0+1}\Psi}{ds^{k_0+1}}(s)\right\Vert$ can be bounded using interval arithmetic. Therefore, the quantity $Y^{(i)}_{j,l}$ that we end up implementing is of the form
\begin{equation*}
Y^{(i)}_{j,l} = \left| \left( A_{m,k}\hat G(\bar u) \right)^{(i)}_{j,l} \right| + \left\vert \left( \left\vert A_{m,k}\right\vert G_{\epsilon}(\bar u) \right)^{(i)}_{j,l} \right\vert,
\end{equation*}
where the vector $\hat G(\bar u)$ contains the approximate integrals and the vector $G_{\epsilon}(\bar u)$ contains the errors bounds for these approximations. Here and in the sequel, absolute values applied to a matrix, like $\vert A_{m,k}\vert$, must be understood component-wise. We point out that, in practice, if the mesh $\Delta_m$ is refined enough, then $\bar u$ is not going to be varying much on each subinterval $[t_j,t_{j+1}]$, and thus we can get rather precise approximations even with a lower order $k_0$ for the Taylor expansion.

We mention that when the vector field $\phi$ is polynomial, $\Psi$ has a finite Taylor expansion, therefore up the integrals can in fact be computed exactly (i.e. we can get $G_{\epsilon}(\bar u)=0$).
}
\end{remark}

We now turn our attention to the second part of the $Y$ bound.

\begin{proposition}
\label{prop:Y_infty}
Let $y$ be defined as in~\eqref{eq:def_y} and consider
\begin{equation*}
Y^{(i)}_{\infty} \geq C_k \tau^p \max\limits_{0\leq j<m}\left((t_{j+1}-t_{j})^{k+1}\max\limits_{t\in[t_{j},t_{j+1}]}\left\vert \frac{d^{k+1-p}}{dt^{k+1-p}}(\phi^{[p]})^{(i)}(\bar u(t))\right\vert\right),\quad \text{for all}~1\leq i\leq n.
\end{equation*}
Then~\eqref{eq:condition_Yinfty} holds.
\end{proposition}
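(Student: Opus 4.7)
The plan is to compute $\Pi^n_\infty y$ explicitly and then reduce to the one-dimensional interpolation error estimate in Proposition~\ref{prop:interpolation_error1}, applied componentwise on each subinterval of $\Delta_m$.

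First I would unpack the definition of $T$ at the numerical solution $\bar u\in S^n_{m,k}$. Since $\Pi^n_{m,k}\bar u=\bar u$ and $\Pi^n_\infty \bar u=0$, a direct expansion gives
\begin{equation*}
y=T(\bar u)-\bar u = -A_{m,k}\Pi^n_{m,k}G(\bar u)+\Pi^n_\infty G(\bar u),
\end{equation*}
so $\Pi^n_\infty y = (Id-\Pi^n_{m,k})G(\bar u)$. Applying $\Pi^n_\infty$ kills the finite-dimensional part and isolates the interpolation error of $G(\bar u)^{(i)}$. Proposition~\ref{prop:interpolation_error1} (used componentwise) will then bound this by a constant times $(t_{j+1}-t_j)^{k+1}$ multiplied by the sup of the $(k+1)$-th derivative of $G(\bar u)^{(i)}$ on each subinterval, provided that function is $\CC^{k+1}$ piecewise.

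The core computation is then to differentiate $G(\bar u)^{(i)}$ $k+1$ times on a generic subinterval $(t_j,t_{j+1})$. Using the piecewise definition \eqref{eq:piecewise_g},
\begin{equation*}
G(\bar u)(t) = \sum_{q=0}^{p-1}\tau^q\frac{(t-t_j)^q}{q!}\phi^{[q]}(\bar u(t_j^-)) + \tau^p\int_{t_j}^t\frac{(t-s)^{p-1}}{(p-1)!}\phi^{[p]}(\bar u(s))ds - \bar u(t),
\end{equation*}
I would observe three things: (i) the initial polynomial part has degree $p-1\leq k$ in $t$ and therefore contributes $0$ upon $(k+1)$-fold differentiation; (ii) since $\bar u \in S^n_{m,k}$ is piecewise polynomial of degree $k$, its $(k+1)$-th derivative vanishes on $(t_j,t_{j+1})$; (iii) the iterated integral $I(t)\bydef \int_{t_j}^t\frac{(t-s)^{p-1}}{(p-1)!}\phi^{[p]}(\bar u(s))ds$ is the $p$-fold primitive of $\phi^{[p]}(\bar u(\cdot))$ vanishing to order $p$ at $t_j$, hence $I^{(p)}(t)=\phi^{[p]}(\bar u(t))$ and more generally $I^{(k+1)}(t)=\frac{d^{k+1-p}}{dt^{k+1-p}}\phi^{[p]}(\bar u(t))$. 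Combining these yields
\begin{equation*}
\frac{d^{k+1}}{dt^{k+1}}G(\bar u)^{(i)}(t) = \tau^p\,\frac{d^{k+1-p}}{dt^{k+1-p}}(\phi^{[p]})^{(i)}(\bar u(t)) \quad\text{on } (t_j,t_{j+1}),
\end{equation*}
and inserting this into Proposition~\ref{prop:interpolation_error1} produces exactly the claimed bound.

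The main obstacle is really just the bookkeeping in step (iii): one must verify the gain of regularity of the iterated integral and match the exponents so that $k+1-p\geq 0$ (which is consistent with the assumption $1\leq p\leq k+1$ implicit in using a degree-$k$ interpolation on an operator that smooths by $p$ orders). Once this identification is made, the required smoothness of $(\phi^{[p]})^{(i)}\circ \bar u$ on each $[t_j,t_{j+1}]$ follows because $\bar u$ is polynomial there and $\phi$ is $\CC^{\bfr}$ with $p\leq \bfr+1$, so the derivative in the bound is well defined and amenable to a rigorous interval enclosure in practice.
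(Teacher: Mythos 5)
Your argument is correct and is essentially the same as the paper's proof: both reduce $\Pi^n_\infty y$ to $\Pi^n_\infty g(\bar u)$ (equivalently, as you write, $(Id-\Pi^n_{m,k})G(\bar u)$, since the degree-$k$ piece $\bar u$ is annihilated anyway), then compute the $(k+1)$-th derivative on each subinterval using $\frac{d^p}{dt^p}g(\bar u)=\tau^p\phi^{[p]}(\bar u)$, and finally apply Proposition~\ref{prop:interpolation_error1} componentwise. Your extra bookkeeping of why the polynomial terms of degree $\leq k$ drop out and why $I^{(p)}=\phi^{[p]}(\bar u)$ merely makes explicit what the paper states in one line.
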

\begin{proof}
We have $\Pi^n_{\infty}y=\Pi^n_{\infty}(G(\bar u)+\bar u)=\Pi^n_{\infty}g(\bar u)$. Since $\frac{d^p}{dt^p}g(\bar u)=\tau^p \phi^{[p]}(\bar u)$, we have that $\frac{d^{k+1}}{dt^{k+1}}g(\bar u)=\tau^p \frac{d^{k+1-p}}{dt^{k+1-p}}\phi^{[p]}(\bar u)$ and Proposition~\ref{prop:interpolation_error1} yields
\begin{align*}
\left\Vert\left(\Pi^n_{\infty}y\right)^{(i)} \right\Vert_{\infty} &\leq C_k \tau^p \max\limits_{0\leq j<m}\left((t_{j+1}-t_{j})^{k+1}\max\limits_{t\in[t_{j},t_{j+1}]}\left\vert \frac{d^{k+1-p}}{dt^{k+1-p}}(\phi^{[p]})^{(i)}(\bar u(t))\right\vert\right). \qedhere
\end{align*}
\end{proof}

\begin{remark} \label{rem:error_max}
{\em 
As comment similar to the one of Remark~\ref{rem:error_integrals} applies here. Indeed, the bound given in Proposition~\ref{prop:Y_infty} is not directly implementable because of the term
\begin{equation*}
\max\limits_{t\in[t_{j},t_{j+1}]}\left\vert \frac{d^{k+1-p}}{dt^{k+1-p}}(\phi^{[p]})^{(i)}(\bar u(t))\right\vert,
\end{equation*}
but we can again get an explicit bound for this quantity by using a low order Taylor approximation and interval arithmetic. In the particular case where the vector field $\phi$ is polynomial, an explicit bound can also be obtained \emph{via} the Chebyshev coefficients of the polynomial $\frac{d^{k+1-p}}{dt^{k+1-p}}(\phi^{[p]})^{(i)}(\bar u)$, as in~\eqref{eq:bound_infty_from_sum}.
}
\end{remark}

\subsection{The \boldmath$Z$\unboldmath~bounds}

In this section we derive the $Z$ bounds, which measure the contraction rate of $T$ on the ball of radius $r$ around $\bar u$. We begin with the finite dimensional part, that is the projection on $\X^n_{m,k}$. Let $z$ be defined as in~\eqref{eq:def_z}. Then

\begin{align*}
\Pi^n_{m,k} z =\ & \Pi^n_{m,k}\left(DT(\bar u+u_1)u_2\right)\\
=\ & \Pi^n_{m,k} u_2 -A_{m,k}\Pi^n_{m,k}\left(DG(\bar u+u_1)u_2\right) \\
=\ & \Pi^n_{m,k} u_2 -A_{m,k}D\Pi^n_{m,k}G(\bar u+u_1) u_2 \\
=\ & \left(Id-A_{m,k}A_{m,k}^{\dag}\right)\Pi^n_{m,k} u_2 -A_{m,k}\left(D\Pi^n_{m,k}G(\bar u+u_1)u_2-A_{m,k}^{\dag}\Pi^n_{m,k} u_2\right) \\
=\ & \left(Id-A_{m,k}A_{m,k}^{\dag}\right)\Pi^n_{m,k} u_2 -A_{m,k}\left(D\Pi^n_{m,k}G(\bar u)u_2-A_{m,k}^{\dag}\Pi^n_{m,k} u_2\right) \\
& -A_{m,k}\left(D\Pi^n_{m,k}G(\bar u+u_1)-D\Pi^n_{m,k}G(\bar u)\right)u_2,
\end{align*}
where $A_{m,k}$ and $A_{m,k}^{\dag}$ are defined as in Section~\ref{sec:operator}. We are going to bound each term separately as
\begin{align}
\nonumber
\left\vert \left(\Pi^n_{m,k} z\right)^{(i)}_{j,l}\right\vert \leq\ & \underbrace{\left\vert \left(\left(Id-A_{m,k}A_{m,k}^{\dag}\right)\Pi^n_{m,k} u_2\right)^{(i)}_{j,l}\right\vert}_{\leq \left(Z_0(r)\right)^{(i)}_{j,l}} + \underbrace{\left\vert \left(A_{m,k}\left(D\Pi^n_{m,k}G(\bar u)u_2-A_{m,k}^{\dag}\Pi^n_{m,k} u_2\right)\right)^{(i)}_{j,l}\right\vert}_{\leq \left(Z_1(r,r_{\infty})\right)^{(i)}_{j,l}} \\
& + \underbrace{\left\vert \left(A_{m,k}\left(D\Pi^n_{m,k}G(\bar u+u_1)-D\Pi^n_{m,k}G(\bar u)\right)u_2\right)^{(i)}_{j,l}\right\vert}_{\leq \left(Z_2(r,r_{\infty})\right)^{(i)}_{j,l}}.
\label{eq:Z_bounds_splitting}
\end{align}

\subsubsection{The bound \boldmath$Z_0(r)$\unboldmath}

The computation of the bounds $\left(Z_0(r)\right)^{(i)}_{j,l}$ estimating the first of the terms in the splitting \eqref{eq:Z_bounds_splitting} is rather straightforward and is simply a control on the precision of the numerical inverse.

\begin{proposition}
\label{prop:Z_0}
Let $u_2\in B_{\X^n}(r,r_{\infty})$, define the vector $\1_{\X^n_{m,k}}\in\R^{nm(k+1)}$ by $\left(\1_{\X^n_{m,k}}\right)^{(i)}_{j,l}=1$ for all $(i,j,l)\in\E^n_{m,k}$ and let
\begin{equation*}
\left(Z_0(r)\right)^{(i)}_{j,l} \bydef \left(\left\vert Id-A_{m,k}A_{m,k}^{\dag}\right\vert\1_{\X^n_{m,k}}\right)^{(i)}_{j,l}r,\quad \text{for all}~(i,j,l)\in\E^n_{m,k}.
\end{equation*}
Then, 
\begin{equation*}
\left\vert \left(\left(Id-A_{m,k}A_{m,k}^{\dag}\right)\Pi^n_{m,k} u_2\right)^{(i)}_{j,l}\right\vert \leq \left(Z_0(r)\right)^{(i)}_{j,l}, \quad \text{for all}~(i,j,l)\in\E^n_{m,k}.
\end{equation*}
\end{proposition}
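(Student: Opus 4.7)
The plan is to interpret $Id - A_{m,k}A_{m,k}^{\dag}$ as an $nm(k+1) \times nm(k+1)$ matrix acting on the finite-dimensional representation of $\X^n_{m,k}$ (via the isomorphism $\bar u \mapsto (\bar u^{(i)}(t_{j,l}))$ introduced in Section~\ref{sec:numerical_implementation}) and then reduce the statement to a straightforward component-wise triangle inequality for matrix-vector products.

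First I would write, for any $v \in \R^{nm(k+1)}$,
\begin{equation*}
\bigl((Id-A_{m,k}A_{m,k}^{\dag})v\bigr)^{(i)}_{j,l} = \sum_{(i',j',l')\in\E^n_{m,k}} (Id-A_{m,k}A_{m,k}^{\dag})^{(i,i')}_{(j,l),(j',l')} \, v^{(i')}_{j',l'},
\end{equation*}
and take absolute values term by term to obtain
\begin{equation*}
\left| \bigl((Id-A_{m,k}A_{m,k}^{\dag})v\bigr)^{(i)}_{j,l} \right| \leq \bigl(\left|Id-A_{m,k}A_{m,k}^{\dag}\right| \, |v|\bigr)^{(i)}_{j,l},
\end{equation*}
where $|v|$ denotes the component-wise absolute value (consistent with the convention stated just after Remark~\ref{rem:error_integrals}).

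Next I would use the hypothesis $u_2 \in B_{\X^n}(r,r_{\infty})$. By definition of the norm $\|\cdot\|_{\X^n}$, this implies $\|\Pi^n_{m,k} u_2\|_{\X^n_{m,k}} \leq r$, and therefore
\begin{equation*}
\left|(\Pi^n_{m,k} u_2)^{(i)}_{j,l}\right| \leq r \quad \text{for all } (i,j,l)\in\E^n_{m,k},
\end{equation*}
which is the component-wise inequality $|\Pi^n_{m,k} u_2| \leq r\,\mathbf{1}_{\X^n_{m,k}}$. Since $|Id-A_{m,k}A_{m,k}^{\dag}|$ has nonnegative entries, multiplying preserves the inequality, giving
\begin{equation*}
\bigl(\left|Id-A_{m,k}A_{m,k}^{\dag}\right| \, |\Pi^n_{m,k} u_2|\bigr)^{(i)}_{j,l} \leq r\,\bigl(\left|Id-A_{m,k}A_{m,k}^{\dag}\right|\mathbf{1}_{\X^n_{m,k}}\bigr)^{(i)}_{j,l} = (Z_0(r))^{(i)}_{j,l}.
\end{equation*}
Combining the two displayed inequalities yields the claim.

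There is no real obstacle here: the proposition is just a bookkeeping statement packaging the precision of the numerical inverse $A_{m,k}$ of $A_{m,k}^{\dag}$ into a vector of explicit bounds. The only points requiring minimal care are (i) making sure the component-wise absolute-value convention on matrices is the one intended, and (ii) noting that the estimate is uniform in $u_2 \in B_{\X^n}(r,r_{\infty})$, so it is compatible with the supremum required in~\eqref{eq:condition_Z} of Theorem~\ref{th:rad_pol}.
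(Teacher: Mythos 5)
Your proof is correct and is exactly the elementary argument the paper has in mind: the paper deliberately omits a proof, remarking only that the bound is ``rather straightforward and is simply a control on the precision of the numerical inverse,'' and what is implied is precisely the component-wise triangle inequality $|Mv|\leq |M|\,|v|$ together with $\|\Pi^n_{m,k}u_2\|_{\X^n_{m,k}}\leq r$ (so that $|\Pi^n_{m,k}u_2|\leq r\,\1_{\X^n_{m,k}}$ component-wise). Your two points of care at the end are also well placed, in particular noting that the bound is uniform over $u_2\in B_{\X^n}(r,r_\infty)$ so it feeds correctly into the supremum in~\eqref{eq:condition_Z}.
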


\subsubsection{The bound \boldmath$Z_1(r,r_\infty)$\unboldmath}

We now construct the bounds $\left(Z_1(r,r_\infty)\right)^{(i)}_{j,l}$ estimating the second term in the splitting \eqref{eq:Z_bounds_splitting}. 
\begin{proposition}
\label{prop:Z_1}
Let $u_2\in B_{\X^n}(r,r_{\infty})$, consider $\rho=\left(\rho^{(i)}_{j,l}\right)_{(i,j,l)\in\E^n_{m,k}}$ such that
\begin{equation*}
\rho^{(i)}_{j,l} \geq r_{\infty}r\frac{\tau^p}{p!}(t_{j,l}-t_j)^{p}\max\limits_{s\in[t_j,t_{j+1}]}\left\vert D(\phi^{[p]})^{(i)}(\bar u(s))\right\vert \1_n, \quad \text{for all}~(i,j,l)\in\E^n_{m,k},
\end{equation*}
where $\1_{n}$ is the vector of size $n$ whose components all are equal to $1$. Let
\begin{equation*}
Z_1(r,r_{\infty})\bydef \left\vert A_{m,k}\right\vert \rho.
\end{equation*}
Then 
\begin{equation*}
\left\vert \left(A_{m,k}\left(D\Pi^n_{m,k}G(\bar u)u_2-A_{m,k}^{\dag}\Pi^n_{m,k} u_2\right)\right)^{(i)}_{j,l}\right\vert \leq \left(Z_1(r,r_{\infty})\right)^{(i)}_{j,l}, \quad \text{for all}~(i,j,l)\in\E^n_{m,k}.
\end{equation*}
\end{proposition}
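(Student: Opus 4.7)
The plan is to rewrite the quantity inside the absolute value as $\Pi^n_{m,k}$ applied to $DG(\bar u)$ acting on the infinite-dimensional part $\Pi^n_\infty u_2$, and then to bound the resulting integral componentwise before applying $\left\vert A_{m,k}\right\vert$.

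First I would observe that, since $A_{m,k}^\dagger = D\bar G(\bar u)$ is the derivative of $\bar G = \Pi^n_{m,k} G_{|S^n_{m,k}}$, its action on $\Pi^n_{m,k} u_2 \in \X^n_{m,k}$ satisfies $A_{m,k}^\dagger \Pi^n_{m,k} u_2 = \Pi^n_{m,k} DG(\bar u) \Pi^n_{m,k} u_2$. Subtracting this from $D\Pi^n_{m,k} G(\bar u) u_2 = \Pi^n_{m,k} DG(\bar u) u_2$ then yields
$$D\Pi^n_{m,k} G(\bar u) u_2 - A_{m,k}^\dagger \Pi^n_{m,k} u_2 = \Pi^n_{m,k} DG(\bar u) \Pi^n_\infty u_2.$$
Writing $G = g - \mathrm{Id}$ and using $\Pi^n_{m,k} \Pi^n_\infty = 0$ reduces this further to $\Pi^n_{m,k} Dg(\bar u) \Pi^n_\infty u_2$, so all that remains is to control this last quantity componentwise on the grid $\E^n_{m,k}$.

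Next I would differentiate formula~\eqref{eq:piecewise_g} and evaluate at $t_{j,l}$. Setting $h \bydef \Pi^n_\infty u_2$, and exploiting that $h$ vanishes at every node of $\Delta_{m,k}$ (including both one-sided limits at each $t_j$, by construction of $\Pi^n_{m,k}$), the entire boundary sum $\sum_{q=0}^{p-1} \tau^q \tfrac{(t_{j,l}-t_j)^q}{q!} D\phi^{[q]}(\bar u(t_j^-)) h(t_j^-)$ disappears, leaving
$$(Dg(\bar u) h)(t_{j,l}) = \tau^p \int_{t_j}^{t_{j,l}} \frac{(t_{j,l}-s)^{p-1}}{(p-1)!} D\phi^{[p]}(\bar u(s)) h(s)\, ds.$$

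From here the remaining estimates are routine: for the $i$-th component I would bound the integrand using $\left\Vert h(s)\right\Vert_\infty \leq r_\infty r$ (which follows directly from $\left\Vert u_2\right\Vert_{\X^n} \leq r$) together with the $\ell^1$ norm of the $i$-th row of the Jacobian, which is precisely what the notation $\left\vert D(\phi^{[p]})^{(i)}(\bar u(s))\right\vert \1_n$ encodes. Bounding this row norm by its maximum over $[t_j, t_{j+1}]$ and evaluating $\int_{t_j}^{t_{j,l}} (t_{j,l}-s)^{p-1}/(p-1)!\, ds = (t_{j,l} - t_j)^p/p!$ gives $\left\vert \left(\Pi^n_{m,k} Dg(\bar u) h\right)^{(i)}_{j,l}\right\vert \leq \rho^{(i)}_{j,l}$. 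Applying $A_{m,k}$ and using the componentwise inequality $\left\vert A_{m,k} v\right\vert \leq \left\vert A_{m,k}\right\vert\, \left\vert v\right\vert$ then finishes the proof.

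The main obstacle I anticipate is nothing more than book-keeping with the projections: correctly identifying $A_{m,k}^\dagger \Pi^n_{m,k}$ with $\Pi^n_{m,k} DG(\bar u) \Pi^n_{m,k}$, and carefully justifying that $h = \Pi^n_\infty u_2$ vanishes at both the interior Chebyshev nodes and the one-sided limits $t_j^\pm$ so as to kill the boundary sum. Once these two points are in place, the proposition reduces to the evaluation of a single elementary integral.
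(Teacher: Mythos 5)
Your proposal is correct and follows essentially the same route as the paper: both reduce the quantity to $A_{m,k}\,D\Pi^n_{m,k}G(\bar u)\,\Pi^n_\infty u_2$ via the identity $D\Pi^n_{m,k}G(\bar u)\Pi^n_{m,k}u_2 = A^\dag_{m,k}\Pi^n_{m,k}u_2$, then bound the resulting integral using $\Vert\Pi^n_\infty u_2\Vert_{\X^n_\infty}\leq r_\infty r$ and the $\ell^1$ row norm of $D\phi^{[p]}$. The only difference is that you spell out the book-keeping (that $G=g-\mathrm{Id}$ contributes nothing after $\Pi^n_{m,k}\Pi^n_\infty=0$, and that $\Pi^n_\infty u_2$ vanishes at the $t_j^-$ so the boundary sum drops out), which the paper leaves implicit.
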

\begin{proof}
By definition of $A_{m,k}^{\dag}$ and $\bar G$, we have that 
\begin{equation*}
D\Pi^n_{m,k}G(\bar u)\Pi^n_{m,k} u_2 = A_{m,k}^{\dag}\Pi^n_{m,k} u_2,\quad \text{for all}~u_2\in \X^n.
\end{equation*}
Therefore, we can rewrite
\begin{align*}
A_{m,k}\left(D\Pi^n_{m,k}G(\bar u)u_2-A_{m,k}^{\dag}\Pi^n_{m,k} u_2\right) &= A_{m,k} D\Pi^n_{m,k}G(\bar u)\left(u_2-\Pi^n_{m,k} u_2\right)\\
&= A_{m,k} D\Pi^n_{m,k}G(\bar u)\Pi^n_{\infty} u_2,
\end{align*}
and we only need to prove that
\begin{equation*}
\left\vert D\Pi^n_{m,k}G(\bar u)\Pi^n_{\infty} u_2\right\vert^{(i)}_{j,l} \leq \rho^{(i)}_{j,l}, \quad \text{for all}~(i,j,l)\in\E^n_{m,k}.
\end{equation*}
Remembering~\eqref{eq:projected_G} and using that $\Vert \Pi^n_{\infty} u_2\Vert_{\X^n_{\infty}}\leq r_{\infty}r$, we estimate for all $(i,j,l)\in\E^n_{m,k}$,
\begin{align*}
\left\vert D\Pi^n_{m,k}G(\bar u)\Pi^n_{\infty} u_2\right\vert^{(i)}_{j,l} &\leq 
r_{\infty}r\tau^p\int_{t_j}^{t_{j,l}}\frac{(t_{j,l}-s)^{p-1}}{(p-1)!}\left\vert D(\phi^{[p]})^{(i)}(\bar u(s))\right\vert \1_{n} ds \nonumber\\
&\leq 
r_{\infty}r\frac{\tau^p}{p!}(t_{j,l}-t_j)^{p}\max\limits_{s\in[t_j,t_{j+1}]}\left\vert D(\phi^{[p]})^{(i)}(\bar u(s))\right\vert \1_n.
\end{align*}
\end{proof}
Notice that Remark~\ref{rem:error_max} also applies here. 

\begin{remark} \label{rem:sharper_estimates}
{\em
Had we used the operator $\tilde g$ (see~\eqref{eq:tilde_g_original}) instead of $g$ (see~\eqref{eq:piecewise_g}), we would have gotten a bound like
\begin{align*}
\left\vert D\Pi^n_{m,k}G(\bar u)\Pi^n_{\infty} u_2\right\vert^{(i)}_{j,l} &\leq 
r_{\infty}r\tau^p\int_{0}^{t_{j,l}}\frac{(t_{j,l}-s)^{p-1}}{(p-1)!}\left\vert D(\phi^{[p]})^{(i)}(\bar u(s))\right\vert \1_{n} ds,
\end{align*}
which is obviously worst because one has to consider the whole integral from $0$ to $t_{j,l}$ instead of just from $t_j$ to $t_{j,l}$.
}
\end{remark}

\subsubsection{The bound \boldmath$Z_2$\unboldmath}

We finally construct the bounds $\left(Z_2(r,r_\infty)\right)^{(i)}_{j,l}$ estimating the last term in the splitting \eqref{eq:Z_bounds_splitting}.
\begin{proposition}
\label{prop:Z_2}
Let $u_1,u_2\in B_{\X^n}(r,r_{\infty})$. consider $\\varrho=\left(\\varrho^{(i)}_{j,l}\right)_{(i,j,l)\in\E^n_{m,k}}$ such that
\begin{align*}
\varrho^{(i)}_{j,l} \geq & \sum_{q=1}^{p-1}\frac{\tau^q}{q!}(t_{j,l}-t_j)^q \sum_{\delta=0}^{q(d-1)-1}\frac{1}{(1+\delta)!}\left\vert D^{2+\delta}(\phi^{[q]})^{(i)}(\bar u(t_j^-))\right\vert\left(\1_n^{2+\delta}\right)r^{2+\delta} \\
& \ +\frac{\tau^p}{p!}(t_{j,l}-t_j)^p \sum_{\delta=0}^{p(d-1)-1}\frac{1}{(1+\delta)!}\max\limits_{s\in[t_j,t_{j+1}]}\left\vert D^{2+\delta}(\phi^{[p]})^{(i)}(\bar u(s))\right\vert\left(\1_n^{2+\delta}\right)((\Lambda_k+r_{\infty})r)^{2+\delta}.
\end{align*}
Let
\begin{equation*}
Z_2(r,r_{\infty}) \bydef \left\vert A_{m,k}\right\vert \varrho.
\end{equation*}
Then 
\begin{equation*}
\left\vert \left(A_{m,k}\left(D\Pi^n_{m,k}G(\bar u+u_1)-D\Pi^n_{m,k}G(\bar u)\right)u_2\right)^{(i)}_{j,l}\right\vert \leq \left(Z_2(r,r_{\infty})\right)^{(i)}_{j,l}, \quad \text{for all}~(i,j,l)\in\E^n_{m,k}.
\end{equation*}
\end{proposition}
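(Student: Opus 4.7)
The plan is to proceed by direct computation, exactly paralleling the proof of Proposition~\ref{prop:Z_1}, but now expanding the full difference of differentials by Taylor's formula rather than using only the first-order increment. First I would differentiate the explicit formula~\eqref{eq:projected_G} with respect to $u$ in the direction $u_2$; the term $-u_2(t_{j,l})$ is constant in $u$ and so disappears upon taking the difference $D\Pi^n_{m,k} G(\bar u + u_1) - D\Pi^n_{m,k} G(\bar u)$. What remains is, for each $j,l$, a sum from $q=1$ to $p-1$ of a boundary contribution of the form $\tau^q \frac{(t_{j,l}-t_j)^q}{q!}\bigl[D\phi^{[q]}(\bar u(t_j^-)+u_1(t_j^-)) - D\phi^{[q]}(\bar u(t_j^-))\bigr] u_2(t_j^-)$, plus an analogous integral contribution involving $D\phi^{[p]}$ evaluated at $\bar u(s)+u_1(s)$ versus $\bar u(s)$.

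The second step is a term-by-term Taylor expansion of each bracket in $u_1$ around $\bar u$: for any $q$,
\begin{equation*}
\bigl[D\phi^{[q]}(\bar u + u_1) - D\phi^{[q]}(\bar u)\bigr] u_2 = \sum_{\delta \geq 0} \frac{1}{(1+\delta)!}\, D^{2+\delta}\phi^{[q]}(\bar u)\bigl(u_1^{\otimes(1+\delta)},\, u_2\bigr).
\end{equation*}
When $\phi$ is polynomial of degree $d$, $\phi^{[q]}$ is polynomial of degree $q(d-1)+1$, so $D^{2+\delta}\phi^{[q]}\equiv 0$ for $\delta \geq q(d-1)$, which explains why the sum in the statement terminates at $\delta = q(d-1)-1$ and no remainder term is required.

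The third step is to bound the arguments of $u_1$ and $u_2$ appropriately. At boundary points $t_j$ the values $u_1(t_j^-), u_2(t_j^-)$ are controlled directly by the finite-dimensional part of the norm, so $|u_1(t_j^-)|, |u_2(t_j^-)| \leq r$; this accounts for the factors $r^{2+\delta}$ and the pointwise derivative $|D^{2+\delta}(\phi^{[q]})^{(i)}(\bar u(t_j^-))|$ in the first sum of $\varrho^{(i)}_{j,l}$. For the integral term one needs a uniform bound on $s \in [t_j, t_{j+1}]$: writing $u = \bar u + \Pi^n_{\infty} u$ and applying the estimate~\eqref{eq:bound_infty_from_max} gives $|\bar u(s)| \leq \Lambda_k r$, while $|\Pi^n_{\infty} u(s)| \leq r_{\infty} r$, yielding the factor $(\Lambda_k + r_{\infty})r$ raised to the $(2+\delta)$-th power, together with the supremum of the $(2+\delta)$-th derivative over the subinterval. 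Integrating $\int_{t_j}^{t_{j,l}} (t_{j,l}-s)^{p-1}/(p-1)!\, ds = (t_{j,l}-t_j)^p/p!$ produces the claimed prefactor.

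Finally, I would conclude by pulling $|A_{m,k}|$ through componentwise to obtain $Z_2(r,r_{\infty}) = |A_{m,k}|\varrho$. The only subtlety is bookkeeping for the multilinear derivative: each occurrence of $u_1$ or $u_2$ must be bounded in the norm appropriate to where it is evaluated (mesh point versus interior), so mixing the two types of arguments in a single multilinear form is where I would be most careful. Upper-bounding every factor by the larger of the two, namely $(\Lambda_k + r_{\infty})r$ in the integral term and $r$ in the boundary term, gives a clean expression and the vector $\mathbf{1}_n^{2+\delta}$ absorbs the componentwise bound on the multilinear map.
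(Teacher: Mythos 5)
Your proof follows essentially the same route as the paper's: differentiate \eqref{eq:projected_G}, observe that the $q=0$ term and the $-u_2(t_{j,l})$ term cancel in the difference, Taylor-expand $D(\phi^{[q]})(\bar u+u_1)-D(\phi^{[q]})(\bar u)$ in $u_1$ (a finite expansion since $\phi$ is polynomial of degree $d$), and bound each multilinear form using $|u_i(t_j^-)|\le r$ at mesh points and $\|u_i\|_\infty\le(\Lambda_k+r_\infty)r$ on the subintervals, then integrate $(t_{j,l}-s)^{p-1}/(p-1)!$ to get $(t_{j,l}-t_j)^p/p!$. The one small blemish is notational: when you write ``$u=\bar u+\Pi^n_\infty u$'' to bound $u_1,u_2$ on a subinterval, the ``$\bar u$'' you mean is $\Pi^n_{m,k}u_i$, which collides with the paper's use of $\bar u$ for the fixed numerical solution; also your worry about ``mixing'' argument types in one multilinear form is moot, since the boundary term evaluates everything at $t_j^-$ and the integral term everything at $s$.
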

\begin{remark}
In the above proposition, $\left\vert D^{2+\delta}(\phi^{[p]})^{(i)}(\bar u(s))\right\vert\left(\1_n^{2+\delta}\right)$ must be understood as the evaluation of the $(2+\delta)$-linear form $\left\vert D^{2+\delta}(\phi^{[p]})^{(i)}(\bar u(s))\right\vert$ at the vectors $(\1_n,\ldots,\1_n)$, that is
\begin{align*}
\left\vert D^{2+\delta}(\phi^{[p]})^{(i)}(\bar u(s))\right\vert\left(\1_n^{2+\delta}\right) = \sum_{1\leq j_1,\ldots, j_{\delta+2}\leq n}\left\vert\partial_{j_1\ldots j_{\delta+2}}(\phi^{[p]})^{(i)}(\bar u(s))\right\vert .
\end{align*}
\end{remark}
\begin{proof} ({\em of Proposition~\ref{prop:Z_2}})
We only have to prove that 
\begin{equation*}
\left\vert \left( \left(D\Pi^n_{m,k}G(\bar u+u_1)-D\Pi^n_{m,k}G(\bar u)\right)u_2\right)^{(i)}_{j,l} \right\vert \leq \varrho^{(i)}_{j,l}, \quad \text{for all}~(i,j,l)\in\E^n_{m,k}.
\end{equation*}
Using~\eqref{eq:bound_infty_from_max} we have that 
\begin{equation*}
\Vert u_2^{(i)}\Vert_{\infty} \leq \Vert \Pi^1_{m,k} u_2^{(i)}\Vert_{\infty}+\Vert \Pi^1_{\infty} u_2^{(i)}\Vert_{\infty} \leq (\Lambda_k+r_{\infty})r.
\end{equation*} 
Then we estimate for all $(i,j,l)\in\E^n_{m,k}$,
\begin{align*}
&\left\vert \left(D\Pi^n_{m,k}G(\bar u+u_1)-D\Pi^n_{m,k}G(\bar u)\right)u_2\right\vert^{(i)}_{j,l}  \\ 
& \leq \sum_{q=1}^{p-1}\frac{\tau^q}{q!}(t_{j,l}-t_j)^q \left\vert \left(D(\phi^{[q]})^{(i)}(\bar u(t_j^-)+u_1(t_j^-)) - D(\phi^{[q]})^{(i)}(\bar u(t_j^-))\right)(u_2(t_j^-))\right\vert \\
& \quad +\tau^p\int_{t_j}^{t_{j,l}} \frac{(t_{j,l}-s)^{p-1}}{(p-1)!}\left\vert\left(D(\phi^{[p]})^{(i)}(\bar u(s)+u_1(s))-D(\phi^{[p]})^{(i)}(\bar u(s))\right)(u_2(s))\right\vert ds \\
& \leq \sum_{q=1}^{p-1}\frac{\tau^q}{q!}(t_{j,l}-t_j)^q \sum_{\delta=0}^{q(d-1)-1}\frac{1}{(1+\delta)!}\left\vert D^{2+\delta}(\phi^{[q]})^{(i)}(\bar u(t_j^-))\right\vert\left(\1_n^{2+\delta}\right)r^{2+\delta} \\
& \quad +\frac{\tau^p}{p!}(t_{j,l}-t_j)^p \sum_{\delta=0}^{p(d-1)-1}\frac{1}{(1+\delta)!}\max\limits_{s\in[t_j,t_{j+1}]}\left\vert D^{2+\delta}(\phi^{[p]})^{(i)}(\bar u(s))\right\vert\left(\1_n^{2+\delta}\right)((\Lambda_k+r_{\infty})r)^{2+\delta}. \qedhere
\end{align*}
\end{proof}
Notice that Remark~\ref{rem:error_max} also applies here.

\subsubsection{The \boldmath$Z_\infty$\unboldmath~bound}

We are left with the \emph{remainder part} of the $Z$ bound, which we treat in this section.

\begin{proposition}
\label{prop:Z_infty}
Let $u_1,u_2\in B_{\X^n}(r,r_{\infty})$ and $z$ as in~\eqref{eq:def_z}. 
Define for all $i\in\{1,\ldots,n\}$
\begin{align*}
&Z^{(i)}_{\infty}(r,r_{\infty})\geq\\
&\quad \tau^p C^{opt}_{k,p} \sum_{\delta=0}^{p(d-1)}\max\limits_{0\leq j<m}\left((t_{j+1}-t_{j})^p\frac{1}{\delta!}\max\limits_{t\in[t_{j},t_{j+1}]}\left\vert D^{1+\delta}(\phi^{[p]})^{(i)}(\bar u(t))\right\vert\left(\1_n^{1+\delta}\right)\right)\left((\Lambda_k+r_{\infty})r\right)^{1+\delta},
\end{align*}
where $C^{opt}_{k,p}$ is one of the two constants given by Propositions~\ref{prop:interpolation_error1} and \ref{prop:interpolation_error2}, namely
\begin{equation*}
C^{opt}_{k,p} = 
\begin{cases}
C_k &\text{if } p=k+1,\\
\tilde C_{k,p} &\text{if } p\leq k.
\end{cases}
\end{equation*}
Then~\eqref{eq:condition_Zinfty} holds.
\end{proposition}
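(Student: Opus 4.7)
The plan is to simplify $\Pi^n_\infty z$ using the structure of $T$, apply one of Propositions~\ref{prop:interpolation_error1}--\ref{prop:interpolation_error2} to estimate the tail of the polynomial interpolation by a $p$-th derivative, and then Taylor-expand that derivative around $\bar u$ so as to make the dependence on $u_1$ explicit and finite.

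Writing $G(u)=g(u)-u$, one has $T(u) = \big(\Pi^n_{m,k} - A_{m,k}\Pi^n_{m,k}G\big)u + \Pi^n_\infty g(u)$. Since $\Pi^n_{m,k}$ and $A_{m,k}$ both have range in $\X^n_{m,k}$, the projection $\Pi^n_\infty$ annihilates the first summand after linearization, and therefore
\[
\Pi^n_\infty z = \Pi^n_\infty DT(\bar u + u_1)u_2 = \Pi^n_\infty \big(Dg(\bar u + u_1)u_2\big).
\]
From~\eqref{eq:piecewise_g}, on each subinterval $(t_j,t_{j+1})$ the map $Dg(u)v$ is a polynomial of degree $\le p-1$ in $t$ plus a $p$-fold iterated integral, so it lies in $\CC^p_{\Delta_m}([0,1],\R^n)$ and
\[
\frac{d^p}{dt^p}\big(Dg(u)v\big)(t) = \tau^p\, D\phi^{[p]}(u(t))v(t).
\]

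Applying Proposition~\ref{prop:interpolation_error1} if $p=k+1$ or Proposition~\ref{prop:interpolation_error2} with $l=p$ if $p\le k$ component-wise, one obtains
\[
\|(\Pi^n_\infty z)^{(i)}\|_\infty \le \tau^p C^{opt}_{k,p}\max_{0\le j<m}\!\left((t_{j+1}-t_j)^p\max_{t\in[t_j,t_{j+1}]}\!\big|D(\phi^{[p]})^{(i)}(\bar u(t)+u_1(t))\,u_2(t)\big|\right).
\]
To eliminate the uncontrolled $u_1$, I Taylor-expand $D(\phi^{[p]})^{(i)}(\cdot)\,u_2$ around $\bar u$ in the direction $u_1$. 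An induction on $p$ via $\phi^{[p+1]} = D\phi^{[p]}\phi$ shows that when $\phi$ is polynomial of degree $d$, $\phi^{[p]}$ has total degree at most $p(d-1)+1$, so $D^{1+\delta}(\phi^{[p]})^{(i)} \equiv 0$ for $\delta > p(d-1)$ and the Taylor series reduces to
\[
D(\phi^{[p]})^{(i)}(\bar u + u_1)\,u_2 = \sum_{\delta=0}^{p(d-1)} \frac{1}{\delta!}\, D^{1+\delta}(\phi^{[p]})^{(i)}(\bar u)\big(u_1^{\delta},u_2\big).
\]

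To finish, I would bound each multilinear form pointwise by expanding it coordinate-wise in the max-norm on $\R^n$,
\[
\big|D^{1+\delta}(\phi^{[p]})^{(i)}(\bar u(t))\big(u_1(t)^\delta,u_2(t)\big)\big| \le \big|D^{1+\delta}(\phi^{[p]})^{(i)}(\bar u(t))\big|(\1_n^{1+\delta})\,\|u_1\|_\infty^{\delta}\|u_2\|_\infty,
\]
which is exactly the definition of $|D^{1+\delta}(\phi^{[p]})^{(i)}|(\1_n^{1+\delta})$ introduced in the remark preceding Proposition~\ref{prop:Z_2}. The assumption $u_1,u_2\in B_{\X^n}(r,r_\infty)$ combined with~\eqref{eq:bound_infty_from_max} yields $\|u_l\|_\infty\le(\Lambda_k+r_\infty)r$ for $l=1,2$, and pushing the outer max over $j$ and the sup over $t\in[t_j,t_{j+1}]$ inside the finite $\delta$-sum (which is valid since each summand is nonnegative) reproduces the stated bound. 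The only step requiring care is the coordinate-wise multilinear estimate that matches the notation $|D^{1+\delta}(\phi^{[p]})^{(i)}(\bar u)|(\1_n^{1+\delta})$; the rest is bookkeeping and direct invocation of the preceding interpolation error propositions.
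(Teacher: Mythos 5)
Your proposal is correct and follows essentially the same route as the paper's proof: reduce $\Pi^n_\infty z$ to $\Pi^n_\infty\big(Dg(\bar u + u_1)u_2\big)$, observe that the $p$-th time derivative of this quantity is $\tau^p D\phi^{[p]}(\bar u+u_1)u_2$, invoke Proposition~\ref{prop:interpolation_error1} or~\ref{prop:interpolation_error2} with $l=p$, and then Taylor-expand $D(\phi^{[p]})^{(i)}$ around $\bar u$ together with the bound $\|u_l\|_\infty\le(\Lambda_k+r_\infty)r$ from~\eqref{eq:bound_infty_from_max}. The only thing you make explicit that the paper leaves implicit is why the $\delta$-sum terminates at $p(d-1)$ (the degree bound on $\phi^{[p]}$ by induction), which is a useful clarification but not a different argument.
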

\begin{proof}
We need to estimate
\begin{equation*}
\Pi^n_{\infty}z = \Pi^n_{\infty}\left(DT(\bar u+u_1)u_2\right) = \Pi^n_{\infty}\left(Dg(\bar u+u_1)u_2\right).
\end{equation*}
For any continuous function $\gamma$, one has
\begin{equation*}
\frac{d^{p}}{dt^{p}}\int_{t_j}^t \frac{(t-s)^{p-1}}{(p-1)!} \gamma(s) ds = \gamma(t),
\end{equation*}
thus we get, for all $1\leq i\leq n$
\begin{align*}
&\left\Vert \Pi^n_{\infty}\left(Dg^{(i)}(\bar u+u_1)u_2\right) \right\Vert_{\infty} \\
&\quad = \left\Vert \Pi^n_{\infty}\left(t\mapsto \tau^p\int_{t_j}^t \frac{(t-s)^{p-1}}{(p-1)!} D(\phi^{[p]})^{(i)}(\bar u(s)+u_1(s))u_2(s)ds\right) \right\Vert_{\infty}\\
&\quad \leq  \tau^p C^{opt}_{k,p} \max\limits_{0\leq j<m}\left((t_{j+1}-t_{j})^p\max\limits_{t\in[t_{j},t_{j+1}]}\left\vert D(\phi^{[p]})^{(i)}(\bar u(t)+u_1(t))u_2(t)\right\vert\right) \\
&\quad \leq  \tau^p C^{opt}_{k,p} \sum_{\delta=0}^{p(d-1)}\max\limits_{0\leq j<m}\left((t_{j+1}-t_{j})^p\frac{1}{\delta!}\max\limits_{t\in[t_{j},t_{j+1}]}\left\vert D^{1+\delta}(\phi^{[p]})^{(i)}(\bar u(t))\right\vert\left(\1_n^{1+\delta}\right)\right)\left((\Lambda_k+r_{\infty})r\right)^{1+\delta}. \qedhere
\end{align*}
\end{proof}
Notice that Remark~\ref{rem:error_max} also applies here.

\subsection{The radii polynomials and interval arithmetics}

The following proposition sums up what has been proven up to now in this section, namely that we have derived bounds that satisfy the requirements~\eqref{eq:condition_Y} to \eqref{eq:condition_Zinfty} from Theorem~\ref{th:rad_pol}.
\begin{proposition}
Let $y$ and $z$ defined as in~\eqref{eq:def_y} and \eqref{eq:def_z}. Then, the bound defined in Proposition~\ref{prop:Y_finite} satisfies~\eqref{eq:condition_Y} and the one from Proposition~\ref{prop:Y_infty} satisfies \eqref{eq:condition_Yinfty}. Also, consider the bounds defined in Propositions~\ref{prop:Z_0} to \ref{prop:Z_2}. Then
\begin{equation*}
Z^{(i)}_{j,l}(r,r_{\infty})=\left(Z_0(r)\right)^{(i)}_{j,l}+
\left(Z_1(r,r_{\infty})\right)^{(i)}_{j,l}+\left(Z_2(r,r_{\infty})\right)^{(i)}_{j,l},
\end{equation*}
satisfies~\eqref{eq:condition_Z} and finally the bound from Proposition~\ref{prop:Z_infty} satisfies~\eqref{eq:condition_Zinfty}.
\end{proposition}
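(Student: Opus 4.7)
The statement is essentially a bookkeeping corollary that combines the bounds derived in the four previous subsections. My plan is to split the proof into four independent pieces, one for each of the four conditions \eqref{eq:condition_Y}--\eqref{eq:condition_Zinfty}, and then simply invoke the corresponding proposition with at most one intermediate estimate.

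For condition \eqref{eq:condition_Y}, I would cite Proposition~\ref{prop:Y_finite} directly, after noting that the identity $\Pi^n_{m,k} y = -A_{m,k}\Pi^n_{m,k} G(\bar u) = -A_{m,k}\bar G(\bar u)$ is already established in its proof. Similarly, condition \eqref{eq:condition_Yinfty} reduces to Proposition~\ref{prop:Y_infty} through the observation $\Pi^n_{\infty} y = \Pi^n_{\infty}(G(\bar u)+\bar u) = \Pi^n_{\infty} g(\bar u)$, which is how the bound was derived. Likewise, condition \eqref{eq:condition_Zinfty} follows immediately from Proposition~\ref{prop:Z_infty}, since $\Pi^n_{\infty} z = \Pi^n_{\infty}(DT(\bar u + u_1) u_2) = \Pi^n_{\infty}(Dg(\bar u + u_1) u_2)$ (the finite-dimensional part is killed by $\Pi^n_{\infty}$).

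The only step that requires a small computation is assembling the finite-dimensional $Z$ bound. Here I would write out the decomposition of $\Pi^n_{m,k} z$ displayed just before \eqref{eq:Z_bounds_splitting}, which splits $DT(\bar u + u_1) u_2$ projected onto $\X^n_{m,k}$ into three summands: $(Id - A_{m,k} A_{m,k}^{\dag})\Pi^n_{m,k} u_2$, the cross term $-A_{m,k}(D\Pi^n_{m,k} G(\bar u) u_2 - A_{m,k}^{\dag}\Pi^n_{m,k} u_2)$, and the nonlinear remainder $-A_{m,k}(D\Pi^n_{m,k} G(\bar u + u_1) - D\Pi^n_{m,k} G(\bar u)) u_2$. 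Applying the triangle inequality component-wise and invoking Propositions~\ref{prop:Z_0}, \ref{prop:Z_1} and \ref{prop:Z_2} on the three pieces respectively yields $|\Pi^n_{m,k} z|^{(i)}_{j,l} \leq (Z_0(r))^{(i)}_{j,l} + (Z_1(r,r_\infty))^{(i)}_{j,l} + (Z_2(r,r_\infty))^{(i)}_{j,l}$ uniformly over $u_1, u_2 \in B_{\X^n}(r, r_\infty)$, which gives condition \eqref{eq:condition_Z} with the advertised sum.

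I do not expect any real obstacle: every estimate and every identity needed has already been justified in the earlier propositions, and the bounds $Z_0, Z_1, Z_2$ were designed precisely so that taking their sum majorizes the triangle-inequality decomposition \eqref{eq:Z_bounds_splitting}. The only care required is a single remark that taking suprema over $u_1, u_2 \in B_{\X^n}(r, r_\infty)$ commutes with the component-wise triangle inequality, which is immediate since each piece of \eqref{eq:Z_bounds_splitting} was already bounded uniformly in $u_1, u_2$ in the corresponding proposition.
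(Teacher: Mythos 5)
Your proposal is correct and follows exactly the paper's intended argument: the paper treats this proposition as a mere recapitulation (it offers no separate proof), with the key decomposition of $\Pi^n_{m,k}z$ already displayed in \eqref{eq:Z_bounds_splitting} and the identities $\Pi^n_{m,k}y=-A_{m,k}\Pi^n_{m,k}G(\bar u)$, $\Pi^n_{\infty}y=\Pi^n_{\infty}g(\bar u)$, and $\Pi^n_{\infty}z=\Pi^n_{\infty}\bigl(Dg(\bar u+u_1)u_2\bigr)$ already established in the proofs of Propositions \ref{prop:Y_finite}, \ref{prop:Y_infty}, and \ref{prop:Z_infty}. Combining Propositions \ref{prop:Z_0}--\ref{prop:Z_2} via the component-wise triangle inequality applied to \eqref{eq:Z_bounds_splitting}, uniformly over $u_1,u_2\in B_{\X^n}(r,r_\infty)$, is precisely what the paper intends.
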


Notice that, the way these bounds are defined, they are polynomials in $r$ and $r_{\infty}$, whose coefficients are all positive and can be computed explicitly with the help of the computer, since they depend on the numerical data of an approximate solution $\bar u$. Also, we make sure to control possible round-off errors by using interval arithmetic (in our case INTLAB~\cite{Ru99a}).

In practice, we first consider $r_{\infty}$ so that it satisfies the constraint~\eqref{eq:constraint_rinfty} introduced in the next section. If there does not exist such positive $r_{\infty}$, we increase $m$ and/or $k$ and/or $p$ and try again. Once $r_{\infty}$ is fixed, we try to find a positive $r$ such that the last conditions~\eqref{eq:condition_p_finite} and~\eqref{eq:condition_p_infty} of Theorem~\ref{th:rad_pol} hold. If there is no such positive $r$, we increase $m$ and/or $k$ and/or $p$ and try again. If we finally find a positive $r$ satisfying \eqref{eq:condition_p_finite} and~\eqref{eq:condition_p_infty}, then we have proven that Theorem~\ref{th:rad_pol} applies, that is there exists a unique zero of $G$ in $B_{\X^n}(r,r_{\infty})$. 

In Sections~\ref{sec:applications} and~\ref{sec:ABC}, we give several examples where the procedure described just above is successfully used to validate solutions of an initial value problem, as well as periodic solutions and heteroclinic orbits. But before doing so, we discuss in the next section the role of the parameters $k$, $m$ and $p$, and how they influence the bounds.

\section{About the choice of the parameters}
\label{sec:para}

In this section, we explain how the parameters $k$, $m$ and $p$ should be chosen, and in particular we highlight how the \emph{a priori bootstrap} (that is taking $p\geq 2$) helps improving the efficiency of the computer-assisted procedure we propose. The discussion will be rather informal, but we hope it helps the reader understand the results of the various comparisons presented in Section~\ref{sec:applications}. Also, to make things slightly simpler we assume here that the grid $\Delta_m$ is uniform, therefore in the estimates each instance of $t_{j+1}-t_j$ can be replaced by $\frac{1}{m}$.

Our main constraint is that we want the method to be successful while minimizing the size of our numerical data, that is the dimension of our finite dimensional space $\X^n_{m,k}$, which is $nm(k+1)$. Since $n$ is fixed by the dimension of the vector field $\phi$, we want to minimize the product $m(k+1)$. As we see in the examples of Section~\ref{sec:applications}, the usual limiting factor when trying to satisfy the radii polynomial inequalities \eqref{eq:condition_p_finite} and~\eqref{eq:condition_p_infty} is to get the order one term (in $r$) to be negative. For the finite part (that is \eqref{eq:condition_p_finite}), that means basically having that
\begin{equation*}
r_{\infty}\frac{\alpha}{p!}\left(\frac{\tau}{m}\right)^p<1,
\end{equation*}
(see Proposition~\ref{prop:Z_1}), and for the remainder part (that is \eqref{eq:condition_p_infty}) we get a condition like
\begin{equation*}
C^{opt}_{k,p}\left(\frac{\tau}{m}\right)^p\beta(\Lambda_k+r_{\infty})<r_{\infty},
\end{equation*}
where $\alpha$ and $\beta$ are constants depending on the numerical solution $\bar u$ and on the vector field $\phi$, but not on the parameters $k$, $m$ and $p$ that we can tune. This leads to
\begin{equation}
\label{eq:constraint_rinfty}
\beta C^{opt}_{k,p}\Lambda_k\left(\frac{\tau}{m}\right)^p < \left(1-\beta C^{opt}_{k,p}\left(\frac{\tau}{m}\right)^p\right)r_{\infty} < \left(1-\beta C^{opt}_{k,p}\left(\frac{\tau}{m}\right)^p\right)\frac{p!}{\alpha}\left(\frac{m}{\tau}\right)^p.
\end{equation}
We want to be able to chose a $r_{\infty}$ satisfying the above inequalities, and a necessary and sufficient condition for that is
\begin{equation*}
\beta C^{opt}_{k,p}\Lambda_k\left(\frac{\tau}{m}\right)^p < \left(1-\beta C^{opt}_{k,p}\left(\frac{\tau}{m}\right)^p\right)\frac{p!}{\alpha}\left(\frac{m}{\tau}\right)^p,
\end{equation*}
which we can rewrite 
\begin{equation}
\label{eq:necessary_condition}
\left(\frac{\tau}{m}\right)^p C^{opt}_{k,p} \left(\beta +\frac{\alpha\beta}{p!} \Lambda_k\left(\frac{\tau}{m}\right)^p\right) < 1.
\end{equation}
Remember that we want~\eqref{eq:necessary_condition} to be satisfied, while minimizing the product $m(k+1)$. When $p$ is fixed, and $k$ becomes large, notice that $C^{opt}_{k,p}$ is decreasing like $\frac{\ln(k)}{k^p}$. However, satisfying~\eqref{eq:necessary_condition} requires, roughly speaking, to decrease $\left(\frac{\tau}{m}\right)^p C^{opt}_{k,p}$ as much as possible. This suggests two things, which we confirm in our explicit examples of Section~\ref{sec:applications}. First, that it is slightly  better to increase $m$ than $k$ (because of the $ln(k)$ factor) and second, that if we take $p$ equal to 2 or more (that is if we use \emph{a priori bootstrap}) then we can satisfy~\eqref{eq:necessary_condition} while taking $m(k+1)$ much smaller than if we had $p$ equal to 1.

Finally, we point out that taking $k=p-1$ seems optimal for the conditon~\eqref{eq:necessary_condition} given by the order one term. Indeed, increasing $k$ from $p-1$ to $p$ increases the total number of coefficients, but brings no gain with respect to~\eqref{eq:necessary_condition} since
\begin{equation*}
C^{opt}_{p-1,p}=C_{p-1}<\tC_{p,p}=C^{opt}_{p,p}.
\end{equation*}
However, for the proof to succeed (that is for \eqref{eq:condition_p_finite} and \eqref{eq:condition_p_infty} to be satisfied) we also need small enough $Y$ and $Y_{\infty}$ bounds. Looking more precisely at $Y_{\infty}$, we see that it is of the form
\begin{equation*}
C_k \frac{1}{m^{k+1}}\gamma,
\end{equation*}
where $\gamma$ depends on the numerical data $\bar u$ and also on $k$, but the dependency on $k$ is way less important than in the $C_k \frac{1}{m^{k+1}}$ term, so we neglect it here. Looking back to the definition of $C_k$ in Proposition~\ref{prop:interpolation_error1}, we see that the term that we want to be small is of the form
\begin{equation*}
\frac{1}{(k+1)!}\frac{4}{(4m)^{k+1}}.
\end{equation*} 
Therefore, if we really need to decrease the $Y_{\infty}$ bound, increasing $k$ is drastically better than increasing $m$. That is why, in practice we often take $k=p$, even though $k=p-1$ would be enough to satisfy~\eqref{eq:necessary_condition}. Finally we point out that, if we are not simply focused on getting an existence result, but also care about having sharp error bound, then we should definitively take care of having small $Y$ and $Y_{\infty}$ bounds, which, as we will show in the next section, can be achieved by slightly increasing $k$ (that is taking $k>p$).

In the next section, we present several comparisons for different choices of parameters, that confirm the heuristic presented in this section.

\section{Examples of applications for the Lorenz system}
\label{sec:applications}

In this section, we consider the Lorenz system, that is
\begin{equation*}
\phi(x,y,z)=\begin{pmatrix}
\sigma(y-x)\\
\rho x -y -xz\\
-\beta z +xy
\end{pmatrix},
\end{equation*}
with standard parameter values $(\sigma,\beta,\rho)=(10,\frac{8}{3},28)$. 
Here, we first consider the initial value problem~ \eqref{eq:general_IVP}, and use those bounds to try and validate orbits of various length with different parameters, to highlight the significant improvement made possible by the \emph{a priori bootstrap} technique (that is taking $p\geq 2$). Then, we show that the \emph{a priori bootstrap} also allows to validate more interesting solutions (from a dynamical point of view), namely periodic orbits and connecting orbits.

\subsection{Comparisons for the initial value problem}
\label{sec:IVP}

The aim of this section is to showcase the improvements allowed by the use of \emph{a priori bootstrap}, and to validate the heuristics made in Section~\ref{sec:para}. To do so, we fix an initial data (chosen close to the attractor of the Lorenz system)
\begin{equation}
\label{eq:initial_value}
u_0=\begin{pmatrix}
-14.68 \\ -11 \\ 37.67
\end{pmatrix},
\end{equation}
and do two kinds of comparisons. First, we try to validate the longest possible orbits for $p=1,2,3$ at various values of $m$ and $k$. We recall that by validating, we mean getting the existence of a true solution near a numerical one, by checking that the hypotheses of Theorem~\ref{th:rad_pol} hold. To make the comparison fair, we fix the total number of coefficients used for the numerical approximation, that is the dimension of $\X^n_{m,k}$, given by $nm(k+1)$. This quantity is usually the bottleneck of our approach, since we need to store and invert the matrix $A^{\dag}_{m,k}$ which is of size $nm(k+1)\times nm(k+1)$. Here, we take $nm(k+1)=14000$ (or as close as possible to $14000$). The computations were made on a laptop with 8GB of RAM, and of course $nm(k+1)$ could be taken larger on a computer with more memory.

The first set of results are given in Table~\ref{table:p1} (we recall that we work with the Lorenz system, therefore $n=3$).

\begin{table}[h!]
\begin{center}
\begin{tabular}{|c|c|c|c|c|}
  \hline
  $k$ & $m$ & $nm(k+1)$ & $\tau_{max}$ & $r$ \\
  \hline
  1 & 2333 & 13998 & \textcolor{red}{0.69} & $2.3233\times 10^{-5}$ \\
  2 & 1556 & 14004 & 0.64 & $1.1524\times 10^{-7}$ \\
  3 & 1167 & 14004 & 0.58 & $8.6805\times 10^{-9}$ \\
  \hline
\end{tabular}
\caption{Comparisons for $p=1$. $\tau_{max}$ is the longest integration time for which the proof succeeds, and $r$ is the associated validation radius, that is a bound of the distance (in $\CC^0$ norm) between the numerical data used and the true solution.}
\end{center}
\label{table:p1}
\end{table}
In all cases, the proof fails for longer time $\tau$, because~\eqref{eq:necessary_condition} is no longer satisfied. We see here that, as announced in Section~\ref{sec:para}, it is better to take $k$ as small as possible to get the longest possible orbit, but that increasing $k$ helps reducing the $Y_{\infty}$ bound, and thus the validation radius $r$. We see that simply increasing the order of the polynomial interpolation (given by $k$), allows to get better accuracy but does not really help to prove longer orbits. However, we are going to show on the next examples (see Table~\ref{table:p2}) that combining \emph{a priori bootstrap} (that is taking $p\geq 2$) with higher order polynomial interpolation does allow to get much longer orbits.

\begin{table}[h!]
\begin{center}
\begin{tabular}{|c|c|c|c|c|}
  \hline
  $k$ & $m$ & $nm(k+1)$ & $\tau_{max}$ & $r$ \\
  \hline
  1 & 2333 & 13998 & 0.97 & $1.5718\times 10^{-3}$ \\
  2 & 1556 & 14004 & \textcolor{red}{5.6} & $8.4373\times 10^{-5}$ \\
  3 & 1167 & 14004 & 5.5 & $8.4184\times 10^{-8}$ \\
  4 & 933  & 13995 & 4.9 & $7.9190\times 10^{-9}$ \\
  \hline
\end{tabular}
\caption{Comparisons for $p=2$. $\tau_{max}$ is the longest integration time for which the proof succeeds, and $r$ is the associated validation radius, that is a bound of the distance (in $\CC^0$ norm) between the numerical data used and the true solution.}
\label{table:p2}
\end{center}
\end{table}
First, comparing the $k=1$ case when $p=1$ and $p=2$, we see that using \emph{a priori bootstrap} allows to get a slightly longer orbit, even for linear interpolation. Also, even for the longest possible orbit in that case ($\tau=0.97$), we still have much room to satisfy~\eqref{eq:necessary_condition} (the quantity given by~\eqref{eq:necessary_condition} is $\ll 1$). However, we cannot get a longer orbit in that case even with $p=2$, because the $Y_{\infty}$ bound becomes too large. This can be dealt with by increasing $k$, and we see that we can then get much longer orbits. To finish this set of comparisons, we show that doing one more iteration of the \emph{a priori bootstrap} process (that is taking $p=3$ instead of $p=2$) still improves the results and allows to get longer orbits (see Table~\ref{table:p3}).

\begin{table}[h!]
\begin{center}
\begin{tabular}{|c|c|c|c|c|}
  \hline
  $k$ & $m$ & $nm(k+1)$ & $\tau_{max}$ & $r$ \\
  \hline
  2 & 1556 & 14004 & 5.6 & $7.6716\times 10^{-4}$ \\
  3 & 1167 & 14004 & \textcolor{red}{8.1} & $9.3043\times 10^{-6}$ \\
  4 & 933  & 13995 & \textcolor{red}{8.1} & $8.8204\times 10^{-8}$ \\
  5 & 778  & 14004 & 8.0 & $1.6175\times 10^{-8}$ \\
  9 & 467  & 14010 & 7.9 & $1.3748\times 10^{-8}$ \\
  19 & 233  & 13980 & 6.9 & $2.2998\times 10^{-8}$ \\
  \hline
\end{tabular}
\caption{Comparisons for $p=3$. $\tau_{max}$ is the longest integration time for which the proof succeeds, and $r$ is the associated validation radius, that is a bound of the distance (in $\CC^0$ norm) between the numerical data used and the true solution.}
\end{center}
\label{table:p3}
\end{table}

We sum up this set of comparisons by displaying the longest orbit obtained with $p=1$, $p=2$ and $p=3$ (see Figure~\ref{fig:longest_orbit}).

\begin{figure} [h!]
\begin{center}
\subfigure{\includegraphics[width=7.5cm]{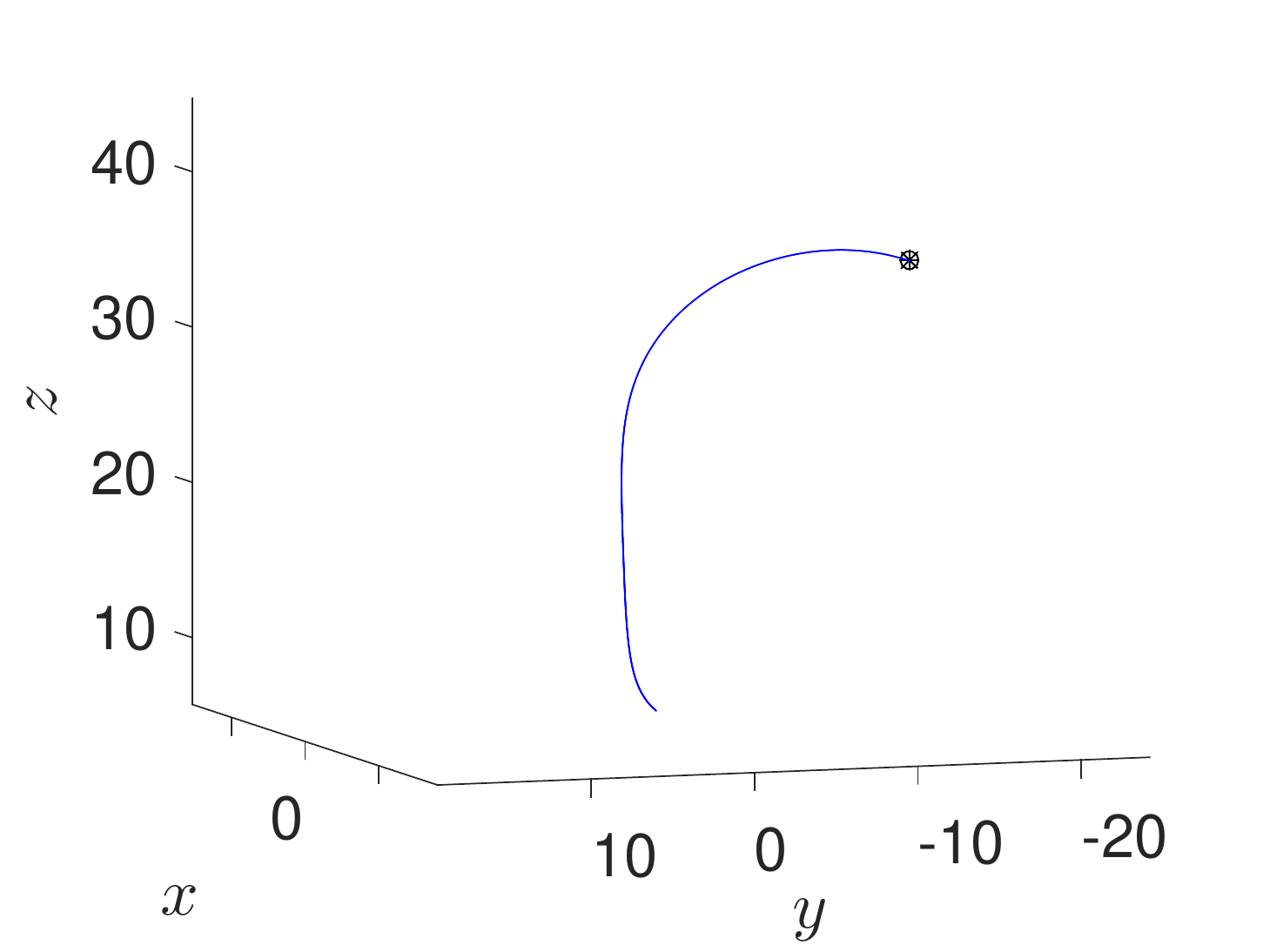}}
\subfigure{\includegraphics[width=7.5cm]{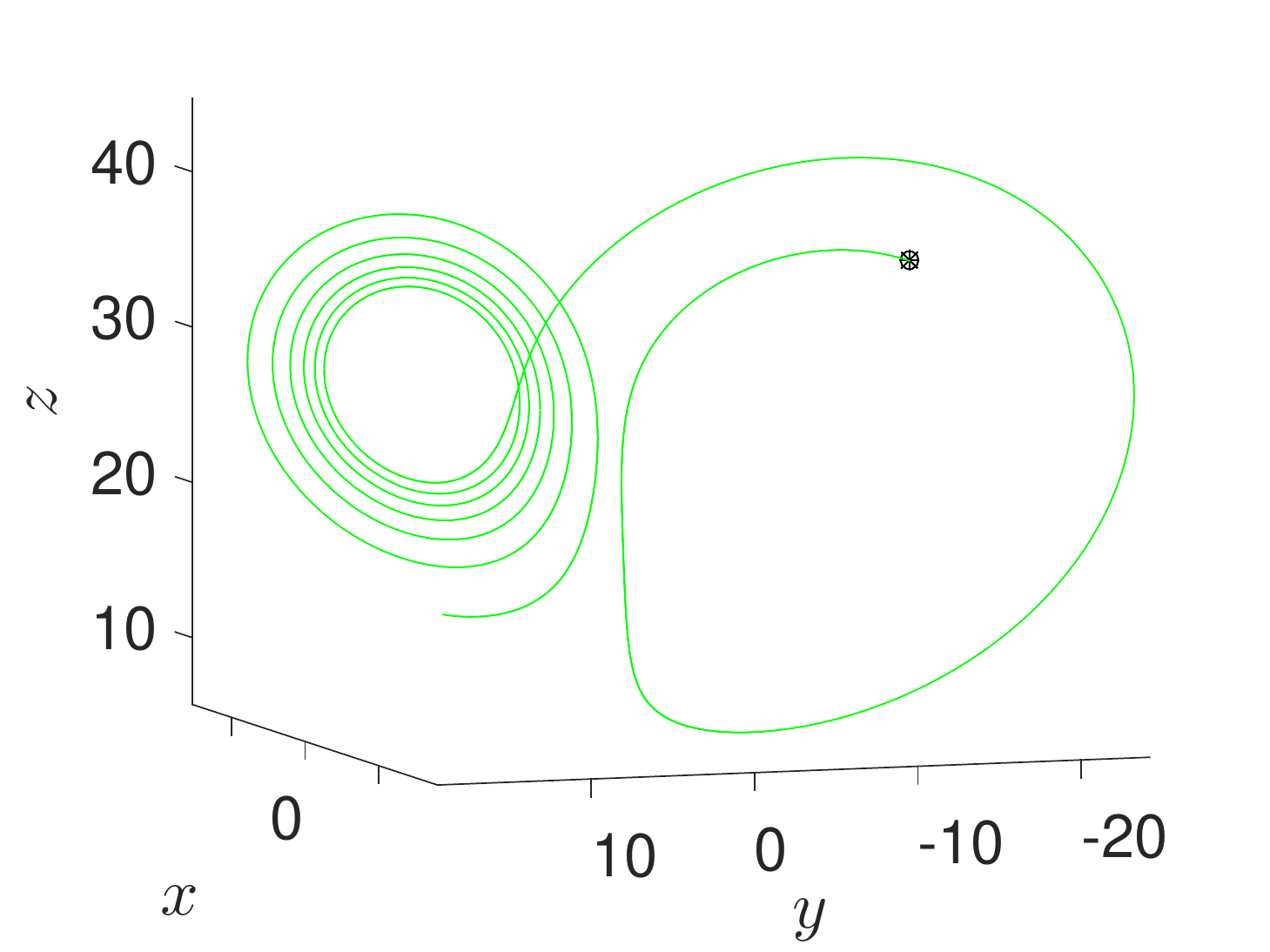}} 
\subfigure{\includegraphics[width=7.5cm]{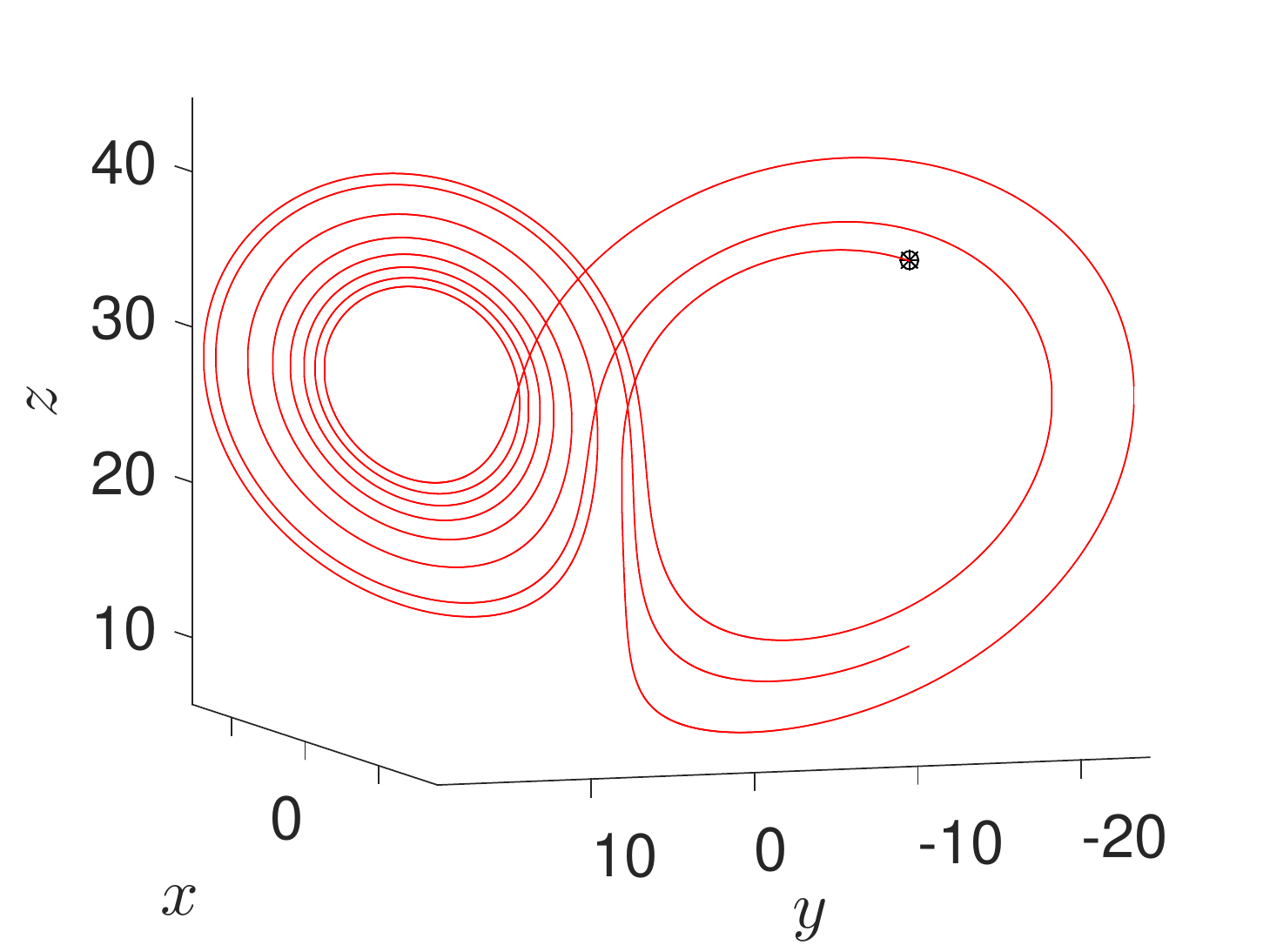}} 
\end{center}
\vspace{-.4cm}
\caption{The longest orbits we are able to validate, with a total number of coefficient of approximately 14000. In blue for $p=1$, in green for $p=2$ and in red for $p=3$. The initial value is given by~\eqref{eq:initial_value}.} 
\label{fig:longest_orbit}
\end{figure}

We then finish this section with another set of comparisons, where we now fix the length of the orbit, here $\tau=2$, and instead look for the minimal total number of coefficients for which we can validate this orbit (for different values of $p$). The aim of this experiment is to show that using \emph{a priori bootstrap} enables to validate solutions that one would not be able to validate without using it. Indeed, we are going to see that taking $p$ greater than one allows us to use way less coefficients to validate the solutions. Thus, if for a given solution, the proof without \emph{a priori bootstrap} requires more coefficients than what our computer can handle, one can reduce this number by using \emph{a priori bootstrap} and then possibly validate the orbit. For instance, still with the initial condition given by~\eqref{eq:initial_value}, we cannot validate the orbit of length $\tau=2$ without \emph{a priori bootstrap} (that is with $p=1$), at least not with less that $14000$ coefficients. However, the next table of results shows that we can validate it with $p=2$, and also using even less coefficients with $p=3$.

\begin{table}[H]
\begin{center}
\begin{tabular}{|c|c|c|c|c|}
  \hline
        & $k=1$    & $k=2$          & $k=3$          & $k=4$          \\
  $p=2$ & no proof & $m=416$        & $m=415$        & $m=377$        \\
        & no proof & \textcolor{red}{$nm(k+1)=3744$} & $nm(k+1)=4980$ & $nm(k+1)=5655$ \\
  \hline
        & $k=2$          & $k=3$          & $k=4$          & $k=5$          \\
  $p=3$ & $m=470$        & $m=125$        & $m=110$        & $m=99$        \\
        & $nm(k+1)=4230$ & \textcolor{red}{$nm(k+1)=1500$} & $nm(k+1)=1650$ & $nm(k+1)=1782$ \\
  \hline
\end{tabular}
\caption{Minimal number of coefficients needed to validate the orbit of length $\tau=2$, starting from $u_0$ given in~\eqref{eq:initial_value}.}
\end{center}
\end{table}

\subsection{Validation of a periodic orbit.}
\label{sec:periodic}

To study periodic orbits, instead of an initial value problem, the system~\eqref{eq:general_problem} has to be slightly modified into a boundary value problem
\begin{equation}
\label{eq:periodic_problem}
\left\{
\begin{aligned}
&u'(t)=\phi(u(t)), \quad t\in[0,\tau],\\
&u(0)=u(\tau), \\
&\left\langle u(0)-u_0,v_0\right\rangle = 0,
\end{aligned}
\right.
\end{equation}
where $\tau$ is now an unknown of the problem, and where $u_0,v_0 \in \mathbb R^n$. The last equation is sometimes called \emph{Poincaré phase condition} and is here to isolate to periodic orbit.

As for the initial value problem, we can then consider an equivalent integral formulation (possibly with \emph{a priori bootstrap}) and define an equivalent fixed point operator $T$ very similar to the one introduced in Section~\ref{sec:general}. The additional phase condition and the fact that $\tau$ is now a variable only require minor modifications of $T$ and of the bounds derived in Section~\ref{sec:bounds} (see for intance \cite{BerShe15}).

Using \emph{a priori bootsrtap}, we are able to validate fairly complicated periodic orbits (see Figure~\ref{fig:periodic}).

\begin{figure}[h!] 
\begin{center}
\includegraphics[width=10cm]{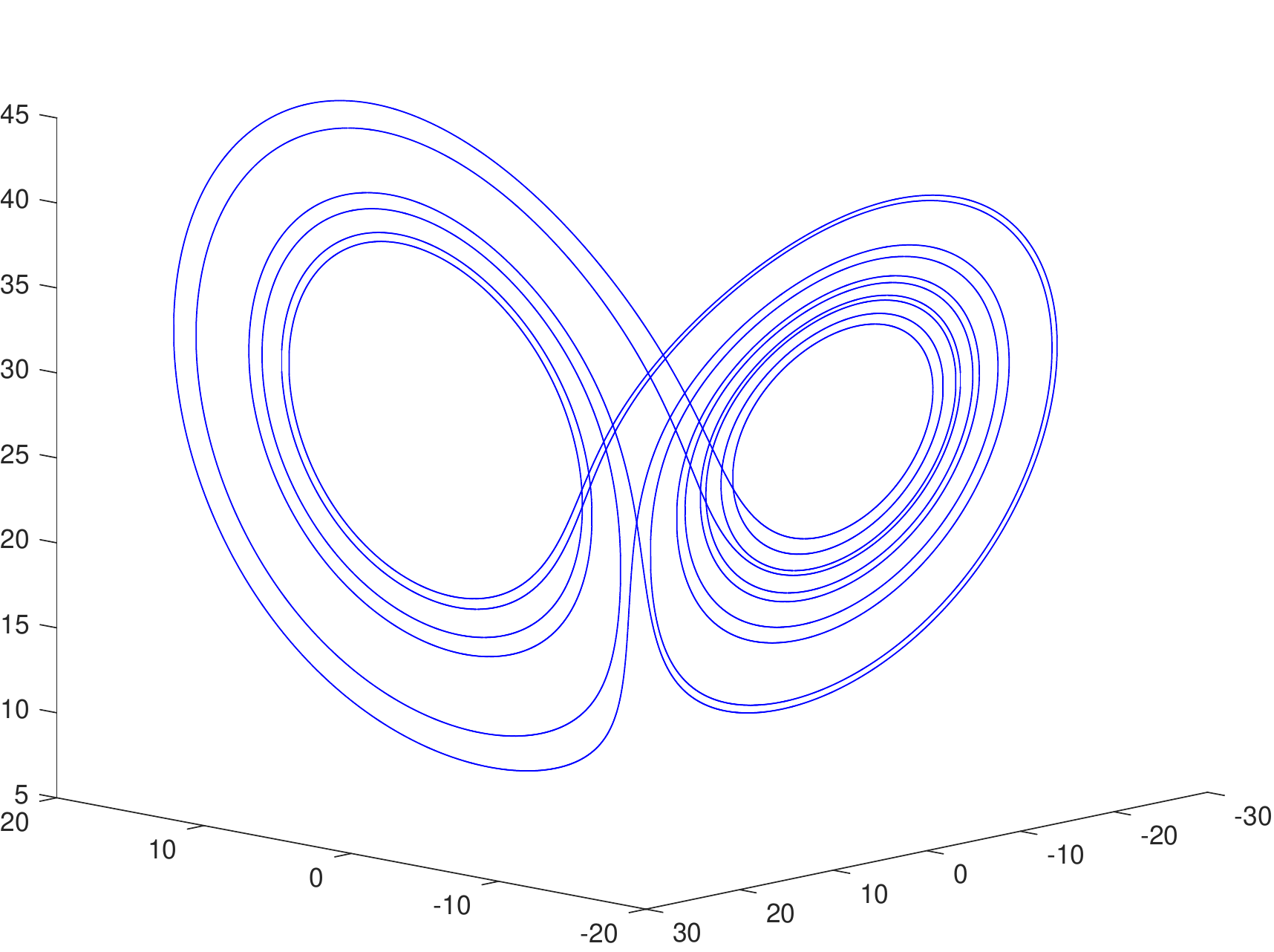}
\end{center}
\vspace{-.3cm}
\caption{A validated periodic orbit of the Lorenz system, whose period $\tau$ is approximately $11.9973$. We used two iterations of \emph{a priori bootstrap}, that is $p=3$, for the validation. If we want to minimize the total number of coefficients to do the validation, we can take $k=3$ and $m=602$ (which makes $7225$ coefficients in total), and we then get a validation radius of $1.5627\times 10^{-4}$. It is possible to get a significantly lower validation radius, at the expense of a slight increase in the total number of coefficients: for instance with $k=5$ and $m=495$ (which makes $8911$ coefficients in total), we get a validation radius of $4.7936\times 10^{-9}$.} 
\label{fig:periodic}
\end{figure}

\subsection{Validation of a connecting orbit}
\label{sec:connecting}

In this section, we present a computer-assisted proof of existence of a connecting orbit in the Lorenz system for the standard parameter values $(\sigma,\beta,\rho)=(10,\frac{8}{3},28)$. It is well know that at these parameter values, the Lorenz system admits a transverse connecting orbit between $\left(\sqrt{\beta(\rho-1)},\sqrt{\beta(\rho-1)},\rho-1\right)$ and the origin. 

While computer-assisted proofs of connecting orbits were already investigated several times using topological and analytical approaches (see~\cite{MR2821596,MR3207723,MR1961956,MR1236201,MR2157844,MR2302059,MR2339601,MR945967,MR1726672,MR2060531,MR2494688,MR2505658,MR2173545,MR1661847,MR2388394}), a paper of particular relevance to the present work is~\cite{MR3207723}, where a particular case of our method is developed, with only linear interpolation and no \emph{a priori bootstrap} (that is $k=1$ and $p=1$). While the authors in \cite{MR3207723} were able to validate several connecting orbits for the Lorenz system, they could not validate the aforementioned connecting orbit for the standard parameter values. In fact, one of the main motivations for the present work was to improve the setting of~\cite{MR3207723} to be able to validate more complicated orbits.

As we showcased in Section~\ref{sec:IVP}, using \emph{a priori bootstrap} enables us to validate significantly more complicated orbits for the initial value problem, and this is also true for connecting orbits. Indeed we are able to validate the standard connecting orbit for the Lorenz system, with parameter values $(\sigma,\beta,\rho)=(10,\frac{8}{3},28)$. Before exposing the results, we briefly describe how to modify~\eqref{eq:general_IVP} to be able to handle connecting orbits.

Compared to an initial value problem on a given time interval, or to a periodic orbit, connecting orbits present an aditionnal difficulty which is that they are defined on an infinite time interval (from $-\infty$ to $+\infty$). To circumvent this difficulty and get back to a time interval of finite length, which is more suited to numerical computations (and to computer-assisted proofs), we are going to use local stable and unstable manifolds of the fixed points. By a computer-assisted method very similar to the one presented here, we first compute and validate local parameterization of the unstable manifold at $\left(\sqrt{\beta(\rho-1)},\sqrt{\beta(\rho-1)},\rho-1\right)$ and of the stable manifold at the origin. Since the main object of this work is not the rigorous computations of those manifolds, we simply assume that they are given (with validation radius) and do not give more details here. The interested reader can find more information about the computations and validations of these parameterizations in~\cite{MR3437754,MR3518609} and the references therein, and also more detailed examples of their usage to get connecting orbits in~\cite{MR2821596,MR3207723,MR3353132,BerShe15}.

We denote by $P$ a local parameterization of the stable manifold of the origin, and by $Q$  a local parameterization of the unstable manifold of  a local parameterization of the stable manifold of $\left(\sqrt{\beta(\rho-1)},\sqrt{\beta(\rho-1)},\rho-1\right)$. We point out that both manifolds are two-dimensional. We then want to solve

\begin{equation}
\label{eq:connecting_problem}
\left\{
\begin{aligned}
&u'(t)=\phi(u(t)), \quad t\in[0,\tau],\\
&u(0)=Q(\varphi), \\
&u(\tau)=P(\theta),
\end{aligned}
\right.
\end{equation}
where $\varphi$ and $\theta$ each is a one dimensional parameter, the parameter in the other dimension being fixed to isolate the solution. Notice that $\tau$ is now an unknown of the system. As for the initial value problem, we can then consider an equivalent integral formulation (possibly with \emph{a priori bootstrap}) and define an equivalent fixed point operator $T$ very similar to the one introduced in Section~\ref{sec:general}. The additional equation $u(\tau)=P(\theta)$ and the fact that we have three additional variables $\tau$, $\theta$ and $\varphi$ only requires minor modifications of $T$ and of the bounds derived in Section~\ref{sec:bounds} (see for intance \cite{BerShe15,MR3207723}).

Using $p=3$, $k=3$ and $m=1150$ (that is a total number of 13803 coefficients) we are then able to rigorously compute a solution of~\eqref{eq:connecting_problem} (see Figure~\ref{fig:connecting_orbit}).

\begin{figure}[h!] 
\begin{center}
\includegraphics[width=12cm]{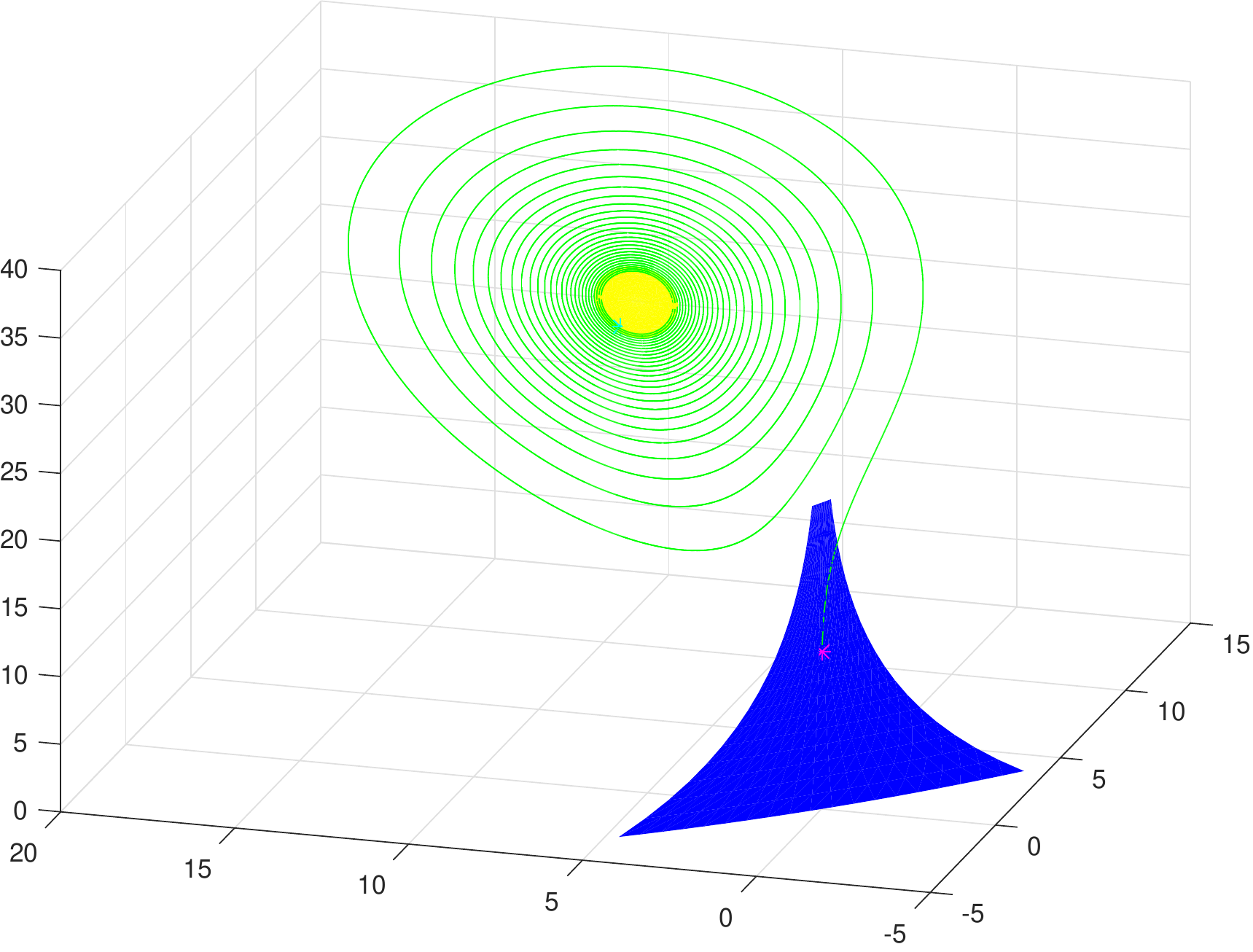}
\end{center}
\vspace{-.3cm}
\caption{Validated connecting orbit for the Lorenz system, with parameters $(\sigma,\beta,\rho)=(10,\frac{8}{3},28)$. The local stable manifold of the origin is in blue, the local unstable manifold of $\left(\sqrt{\beta(\rho-1)},\sqrt{\beta(\rho-1)},\rho-1\right)$ in yellow, and the green connection between them (of length $\tau\simeq 17.3$) is validated using polynomial interpolation, with \emph{a priori bootstrap} ($p=3$). The proof gives a validation radius of $r=3.1340\times 10^{-5}$.} 
\label{fig:connecting_orbit}
\end{figure}

\section{Examples of applications for ABC flows}
\label{sec:ABC}

In this section, we apply our method to the non polynomial vector field 
\begin{equation*}
\phi_{A,B,C}(x,y,z) \bydef \begin{pmatrix}
A\sin(z) + C\cos(y) \\ B\sin(x) + A\cos(z) \\ C\sin(y) + B\cos(x)
\end{pmatrix}, \quad A,B,C \in \R.
\end{equation*}
The map $\phi_{A,B,C}$ is usually referred to as the Arnold-Beltrami-Childress (ABC) vector field, and gives a prime example of complex steady incompressible periodic flows in 3D (see~\cite{Arnold_ABC,MR851673,ref2_ABC} and the references therein).

The main point of this section is to briefly illustrate the applicability of our technique to non polynomial vector fields. We plan on studying more thoroughly ABC flows with the help of our a posteriori validation method in a future work. Recently, the existence of orbits, that are periodic up to a shift of $2\pi$ in one coordinates, have been proven in the cases $A=B=C=1$ and $0<A\ll 1,\ B=C=1$ \cite{MR3549020,MR3580814}. Applying the method developed in this paper, we were able to complete these results by proving the following statements.
\begin{theorem}
\label{th:variable_A}
For all $A=0.1,0.2,\ldots,1$, with $B=C=1$, there exists $\tau_A\in[\tau^-_A,\tau^+_A]$ (see Table~\ref{table:tau_A}) and a solution $(x,y,z)$ of the ABC flow such that
\begin{equation*}
x(t+\tau)=x(t)+2\pi,\quad y(t+\tau)=y(t),\quad z(t+\tau)=z(t),\quad\forall~t\in\R.
\end{equation*}
\end{theorem}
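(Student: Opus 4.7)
The plan is to recast the existence problem as a boundary value problem on a finite time interval and then apply the computer-assisted framework of Sections~\ref{sec:a_priori_bootstrap}--\ref{sec:bounds}, adapted in the spirit of the periodic orbit treatment of Section~\ref{sec:periodic}. After rescaling time to $[0,1]$, I would seek a pair $(u,\tau)=((x,y,z),\tau)\in\CC^0_{\Delta_m}([0,1],\R^3)\times\R$ solving
\[
\begin{cases}
u'(t)=\tau\,\phi_{A,1,1}(u(t)),& t\in[0,1],\\
u(1)-u(0)=(2\pi,0,0),\\
\langle u(0)-u_{\rm ref},v_0\rangle=0,
\end{cases}
\]
where the last line is a Poincaré-type phase condition that isolates the orbit modulo the autonomous time translation. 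The period $\tau$ enters as an additional scalar unknown, exactly as in the periodic problem~\eqref{eq:periodic_problem}, the only change being that the periodicity condition $u(0)=u(\tau)$ is replaced by the twisted condition $u(\tau)-u(0)=(2\pi,0,0)$.

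For each $A\in\{0.1,0.2,\ldots,1\}$, the procedure is then: first, compute an approximate solution $(\bar u,\bar\tau)$ in the piecewise Chebyshev representation of Section~\ref{sec:numerical_implementation}; second, apply the \emph{a priori bootstrap} of Lemma~\ref{lem:operator_order_p} with $p\ge 2$ to gain regularity, which the experiments of Section~\ref{sec:IVP} show is crucial to close the argument on orbits of nontrivial length; third, assemble the bounds of Propositions~\ref{prop:Y_finite}--\ref{prop:Z_infty}, modified as in~\cite{BerShe15} to accommodate the extra scalar variable $\tau$ in the Jacobian $A_{m,k}^{\dag}$ and the corresponding rows/columns of $A_{m,k}$; and fourth, verify on a computer that the radii polynomial inequalities~\eqref{eq:condition_p_finite}--\eqref{eq:condition_p_infty} admit a positive solution $(r,r_\infty)$. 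Theorem~\ref{th:rad_pol} then yields a unique zero of the associated $G$ inside a ball, from which the enclosure $[\tau_A^-,\tau_A^+]$ is read off as the projection of that ball onto the $\tau$-component.

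Because $\phi_{A,1,1}$ is non-polynomial, neither the integrals appearing in $g(\bar u)$ nor the pointwise maxima of $\phi^{[p]}(\bar u)$ and its derivatives can be evaluated in closed form. Following Remarks~\ref{rem:error_integrals} and~\ref{rem:error_max}, I would expand the relevant functions in low-order Taylor polynomials in $s$ on each subinterval $(t_j,t_{j+1})$ and control the remainders by interval arithmetic. The iterated fields $\phi^{[q]}$ are compositions of $\sin,\cos$ with polynomials, so they can be generated symbolically or by automatic differentiation, and the uniform bounds $|\sin|,|\cos|\le 1$ keep the enclosures on $|D^{1+\delta}\phi^{[q]}(\bar u)|$ manageable and essentially independent of $\bar u$, which is an important asset here.

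I expect two main obstacles. The first is the algebraic complexity of $\phi^{[p]}$ for $p\ge 2$: even with $p=3$ the iterated field is a long trigonometric polynomial in $x,y,z$ whose naive enclosure along $\bar u$ inflates the $Y_\infty$ and $Z$ bounds enough to violate~\eqref{eq:necessary_condition}; a careful implementation, or a modest increase of $k$ to exploit the $\tfrac{1}{(k+1)!(4m)^{k+1}}$ decay of $Y_\infty$ highlighted in Section~\ref{sec:para}, will likely be needed. The second is that as $A$ decreases from $1$, the drift $\dot x=A\sin z+\cos y$ weakens, so the period $\tau_A$ is expected to grow, the orbit becomes longer and more oscillatory, and the mesh $\Delta_m$ must be refined (and possibly made nonuniform) to keep $(t_{j+1}-t_j)^p$ small where the trajectory moves quickly. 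Once these are handled, the theorem follows by running the verification once per value of $A$ and recording the resulting interval $[\tau_A^-,\tau_A^+]$.
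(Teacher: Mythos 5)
Your proposal is correct and matches the paper's approach: the paper's proof simply states that a MATLAB script computes an approximate solution, evaluates the bounds of Section~\ref{sec:bounds} (with the twisted boundary condition, phase condition, and $\tau$ as an extra unknown, as you describe), and verifies the radii polynomial inequalities~\eqref{eq:condition_p_finite}--\eqref{eq:condition_p_infty}. The only minor discrepancy is a matter of implementation detail rather than method: the paper reports that $p=2$, $k=2$, $m=50$ sufficed for all ten values of $A$, so the escalation to $p=3$ or a nonuniform mesh that you anticipated was not in fact needed.
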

\begin{proof}
The proof is done by running \verb+script_proofs_A11.m+ (available at~\cite{webpage_AprioriBootstrap}), which for each $A=0.1,0.2,\ldots,1$ computes an approximate solution, then computes bounds satisfying~\eqref{eq:condition_Y}-\eqref{eq:condition_Zinfty} as described in Section~\ref{sec:bounds}, and finally finds positive $r_{\infty}$ and $r$ such that~\eqref{eq:condition_p_finite}-\eqref{eq:condition_p_infty} holds. 
\end{proof}
\begin{theorem}
\label{th:4pi}
For all $A=B=C=1$, there exists $\tau\in[7.797656,7.797666]$ and a solution $(x,y,z)$ of the ABC flow such that
\begin{equation*}
x(t+\tau)=x(t)+4\pi,\quad y(t+\tau)=y(t),\quad z(t+\tau)=z(t),\quad\forall~t\in\R
\end{equation*}
and $x(\cdot+\tau)\neq x(\cdot)+2\pi$.
\end{theorem}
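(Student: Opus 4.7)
The plan is to mimic the strategy used for Theorem~\ref{th:variable_A} but with the $2\pi$-shift replaced by a $4\pi$-shift in the $x$-coordinate, with an additional step verifying the stated non-triviality condition on $x$.

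First I would reformulate the problem as a boundary value problem on the unit interval. Rescaling time by $\tau$, the target orbit $v:[0,\tau]\to\R^3$ corresponds to $u:[0,1]\to\R^3$ satisfying
\[
\left\{
\begin{aligned}
&u'(s)=\tau\,\phi_{1,1,1}(u(s)),\quad s\in[0,1],\\
&u^{(1)}(1)=u^{(1)}(0)+4\pi,\quad u^{(2)}(1)=u^{(2)}(0),\quad u^{(3)}(1)=u^{(3)}(0),\\
&\langle u(0)-u_0,v_0\rangle=0,
\end{aligned}
\right.
\]
with $\tau>0$ as an extra scalar unknown, the last equation being a Poincar\'e phase condition to break the time-translation symmetry. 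As in the periodic and connecting orbit settings of Sections~\ref{sec:periodic} and~\ref{sec:connecting}, the piecewise Picard-like operator $g$ of~\eqref{eq:piecewise_g} and the Newton-like operator $T$ need only minor modifications to incorporate the new boundary terms and the extra variable $\tau$; the bounds derived in Section~\ref{sec:bounds} adapt analogously, the only novelty being that the Jacobian acquires a derivative with respect to $\tau$.

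Second, I would compute numerically an approximate orbit with candidate period $\tau\approx 7.7976$ by Newton iteration in $\X^3_{m,k}$, and then run the radii polynomial procedure of Theorem~\ref{th:rad_pol} with \emph{a priori bootstrap} ($p\geq 2$). Since $\phi_{1,1,1}$ is non-polynomial, the iterated vector fields $\phi^{[p]}$ and their derivatives involve trigonometric expressions whose sup-norm on each subinterval $[t_j,t_{j+1}]$ must be enclosed rigorously using the Taylor-plus-interval-arithmetic scheme described in Remarks~\ref{rem:error_integrals} and~\ref{rem:error_max}. Choosing $(p,k,m)$ analogously to the connecting-orbit computation of Section~\ref{sec:connecting} (for instance $p=3$, $k=3$ and $m$ moderately large), the heuristic~\eqref{eq:necessary_condition} should be satisfied and the radii polynomial inequalities~\eqref{eq:condition_p_finite}--\eqref{eq:condition_p_infty} then yield a true solution within a validation ball of some explicit radius $r$ around $\bar u$. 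The enclosing interval $[7.797656,7.797666]$ for $\tau$ is read off from the approximate value plus or minus the corresponding component of the validation radius.

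Third, to certify the non-triviality condition, I interpret it as ruling out the scenario in which the validated orbit is actually a $2\pi$-winding orbit traversed twice, i.e.\ satisfies $x(t+\tau/2)=x(t)+2\pi$. This can be checked by exhibiting a specific point $s_0\in[0,1]$ such that the numerical quantity
\[
\bigl|\bar u^{(1)}(s_0+\tfrac12)-\bar u^{(1)}(s_0)-2\pi\bigr|
\]
exceeds $2r$, so that by the triangle inequality every true solution in the validation ball satisfies the same strict inequality, preventing the $2\pi$-shift symmetry at half period. The main obstacle I expect is the joint tuning of $(p,k,m)$: the integration length $\tau\approx 7.8$ is moderate but not tiny, the non-polynomial nonlinearities make the Taylor enclosures of $\phi^{[p]}$ costlier than in the Lorenz case, and one must keep $3m(k+1)$ within the memory budget; as in the Lorenz comparisons of Section~\ref{sec:IVP}, \emph{a priori bootstrap} is essential to making this tuning feasible.
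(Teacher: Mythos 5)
Your proposal follows essentially the same approach as the paper, whose proof is simply a pointer to the validation script \texttt{script\_proofs\_111.m}: recast the problem as a boundary-value problem on $[0,1]$ with a $4\pi$ shift in the $x$-component and a phase condition, apply the piecewise Picard-like operator with a priori bootstrap, enclose the trigonometric nonlinearities with Taylor-plus-interval arithmetic, and close the proof via the radii polynomial inequalities. Figure~\ref{fig:4pi} records that the parameters actually used were $p=2$, $k=2$, $m=300$ rather than your suggested $p=3$, $k=3$; this is only a tuning choice. One subtlety you handled sensibly: as literally written, the condition $x(\cdot+\tau)\neq x(\cdot)+2\pi$ is vacuous, since it follows immediately from $x(t+\tau)=x(t)+4\pi$ and $4\pi\neq 2\pi$; the meaningful non-triviality claim is the no-double-traversal statement $x(\cdot+\tau/2)\neq x(\cdot)+2\pi$ that you propose, and the triangle-inequality test against the validation radius $r$ that you describe is a valid way to certify it.
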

\begin{proof}
The proof is done by running \verb+script_proofs_111.m+ (available at~\cite{webpage_AprioriBootstrap}), which computes an approximate solution, then computes bounds satisfying~\eqref{eq:condition_Y}-\eqref{eq:condition_Zinfty} as described in Section~\ref{sec:bounds}, and finally finds positive $r_{\infty}$ and $r$ such that~\eqref{eq:condition_p_finite}-\eqref{eq:condition_p_infty} holds. 
\end{proof}
The solutions given by Theorem~\ref{th:variable_A} are represented in Figure~\ref{fig:variable_A}, and the solution given by Theorem~\ref{th:4pi} is represented in Figure~\ref{fig:4pi}.

\begin{table}[htbp]
\centering
\begin{tabular}{|c||c|c|c|c|c|}
\hline
$A$ & $1$ & $0.9$ & $0.8$ & $0.7$ & $0.6$ \\
\hline
$\tau^-_A$ & $3.23527736$ & $3.41779635$ & $3.62512508$ & $3.86405419$ & $4.14464726$ \\
\hline
$\tau^+_A$ & $3.23527746$ & $3.41779647$ & $3.62512521$ & $3.86405436$ & $4.14464749$ \\
\hline
\end{tabular}
\begin{tabular}{|c||c|c|c|c|c|}
\hline
$A$ & $0.5$ & $0.4$ & $0.3$ & $0.2$ & $0.1$ \\
\hline
$\tau^-_A$ & $4.48269179$ & $4.90491344$ & $5.46177978$ & $6.26680147$ & $7.67945129$ \\
\hline
$\tau^+_A$ & $4.48269213$ & $4.90491401$ & $5.46178092$ & $6.26680442$ & $7.67946552$ \\
\hline
\end{tabular}
\caption{The intervals $[\tau^-_A,\tau^+_A]$, for $A=0.1,0.2,\ldots,1$ in which the \emph{period} $\tau_A$ of the solution described in Theorem~\ref{th:variable_A} is proved to be.}
\label{table:tau_A}
\end{table}

\begin{figure} [htbp]
\begin{center}
\subfigure{\includegraphics[width=7.5cm]{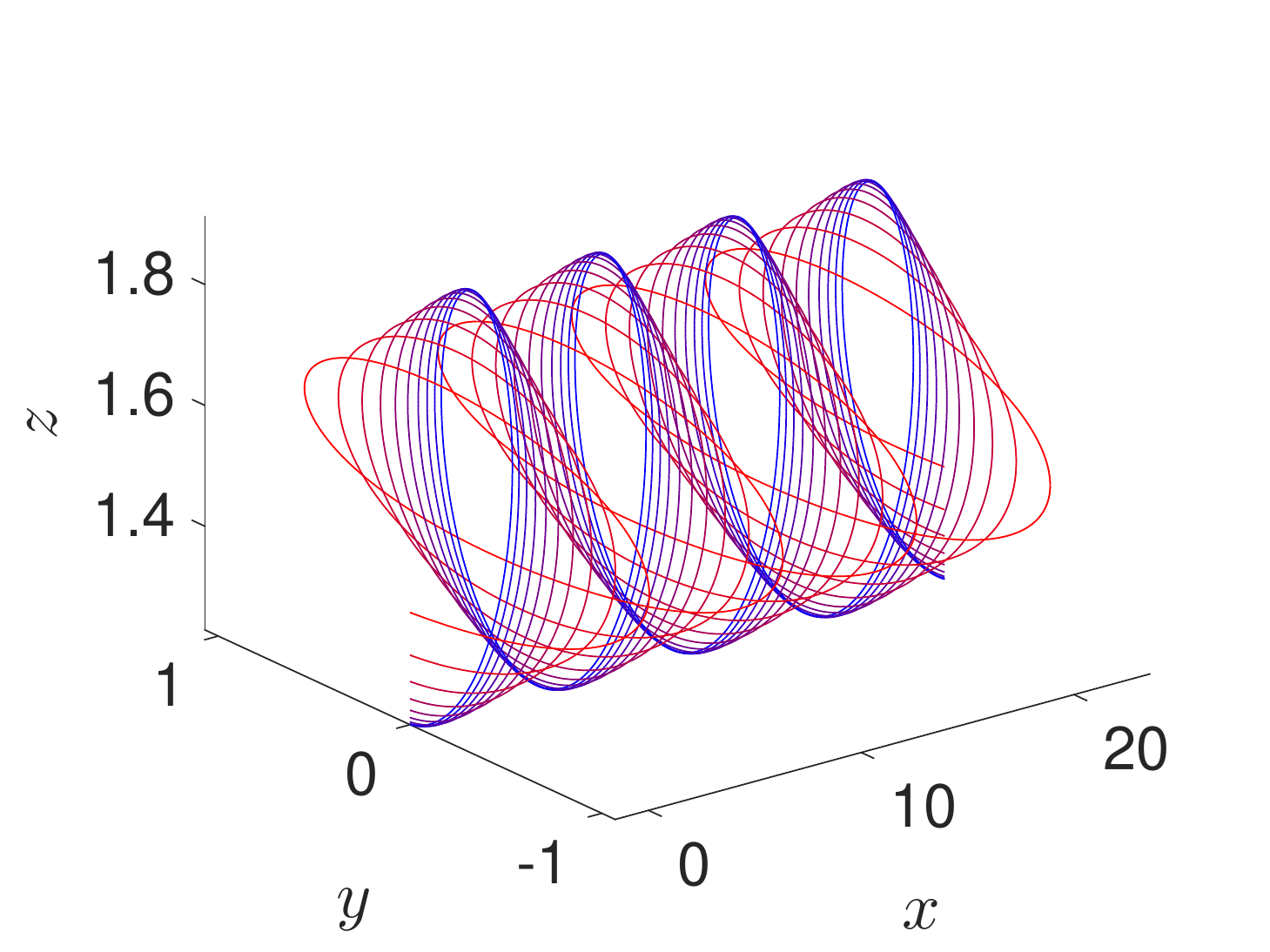}}
\subfigure{\includegraphics[width=7.5cm]{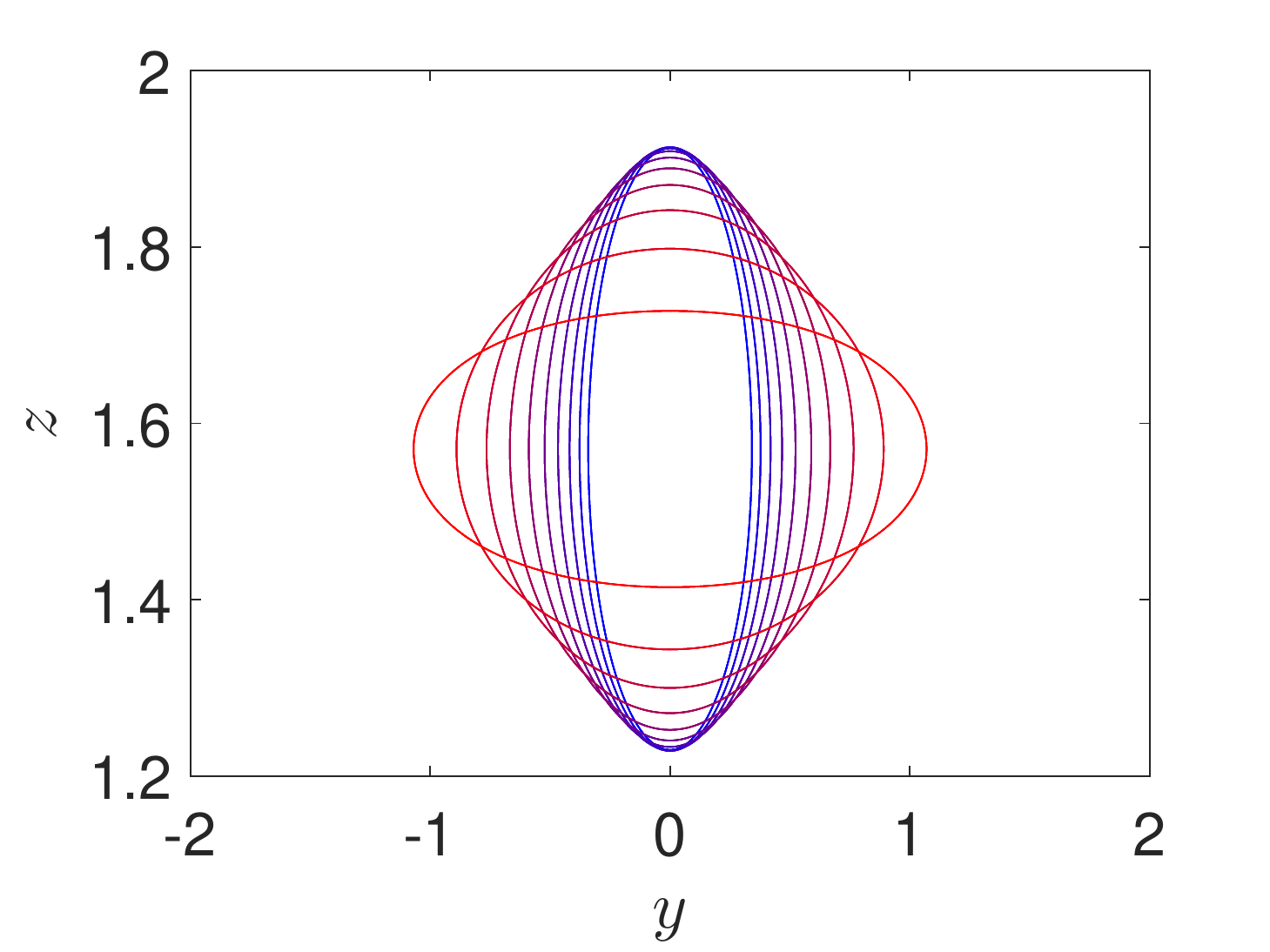}} 
\end{center}
\vspace{-.4cm}
\caption{These are the orbits that are described in Theorem~\ref{th:variable_A}. The color varies from blue for $A=1$ to red for $A=0.1$. Each proof was done with $p=2$, $k=2$ and $m=50$, and gave a validation radius varying from $r=4.8313\times 10^{-8}$ to $r=7.4012\times 10^{-6}$.}
\label{fig:variable_A}
\end{figure}

\begin{figure} [htbp]
\begin{center}
\subfigure{\includegraphics[width=7.5cm]{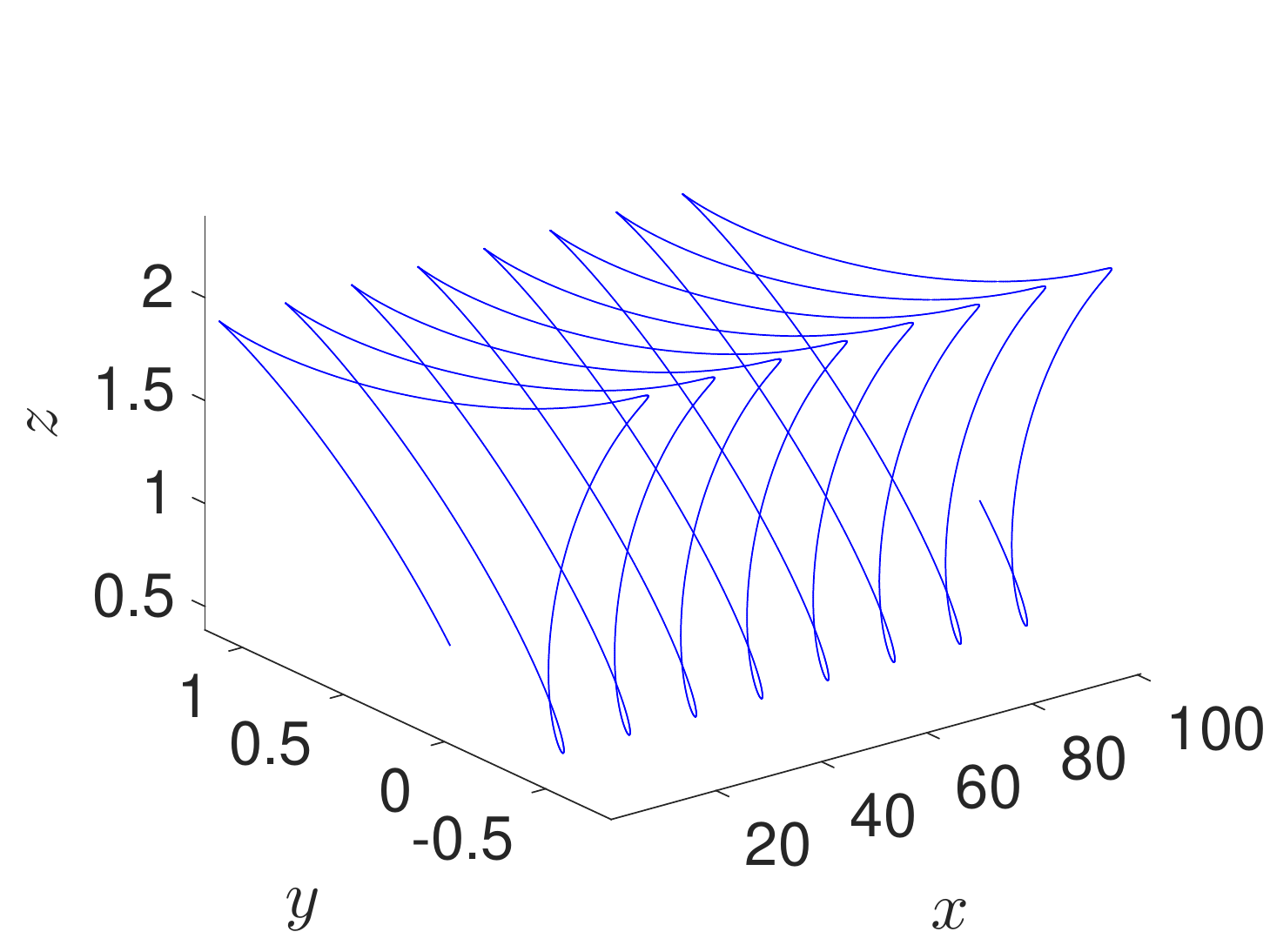}}
\subfigure{\includegraphics[width=7.5cm]{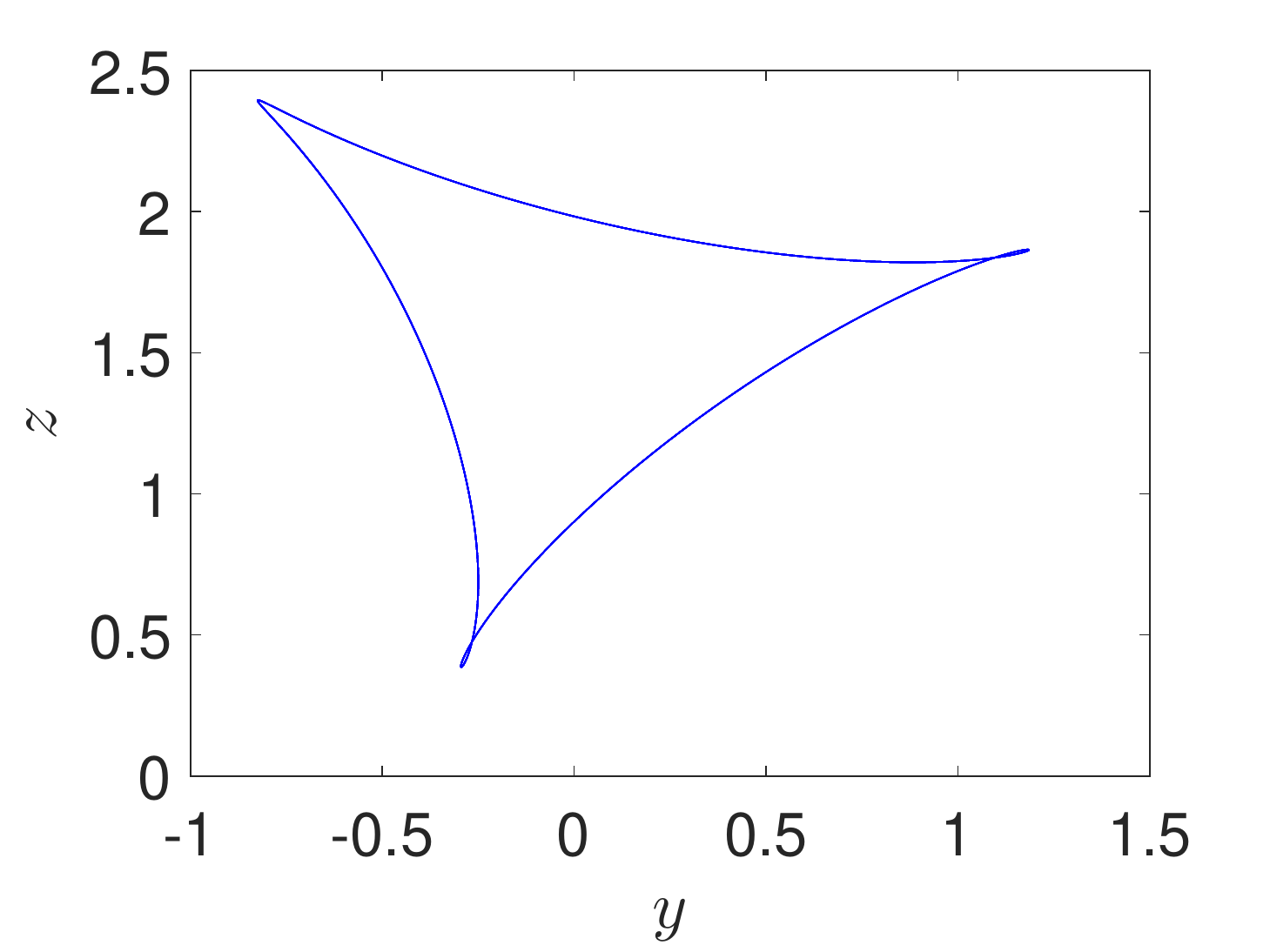}} 
\end{center}
\vspace{-.4cm}
\caption{This is the orbit that is described in Theorem~\ref{th:4pi}. The proof was done with $p=2$, $k=2$ and $m=300$, and gave a validation radius $r=4.0458\times 10^{-6}$.}
\label{fig:4pi}
\end{figure}

\section*{Appendix}

For the sake of completeness, we give here some properties of the Lebesgue constant $\Lambda_k$, as well as proofs of Proposition~\ref{prop:interpolation_error1} and Proposition~\ref{prop:interpolation_error2}. We will assume here that $u$ is defined and smooth on $[-1,1]$. The corresponding estimates on $[t_j,t_{j+1}]$ can the easily be deduced by rescaling.

We recall that $\Lambda_k$ is defined as the norm of the interpolation operator mapping $\CC^0([-1,1],\R)$ to itself, and associating a continuous function $u$ to its interpolation polynomial $P_k(u)$ of order $k$. Of course this operator (and its norm) depend on the interpolation points, which in this work are the Chebyshev interpolation points of the second kind
\begin{equation*}
x_l^k=\cos\left(\frac{k-l}{k}\pi\right),\quad \text{for all}~l = 0,\dots , k.
\end{equation*}
Introducing the basis consisting of the {\em Lagrange functions}
\begin{equation*}
L^k_i(x) \bydef \prod_{j\neq i}\frac{x-x^k_j}{x^k_i-x^k_j},
\end{equation*}
we have that the {\em Lagrange interpolation polynomial of order $k$} is given by
\begin{equation} \label{eq:Lagrange_polynomial}
P_k(u)(x)=\sum_{i=0}^k u(x^k_i)L^k_i(x).
\end{equation}
One can then show (see for instance~\cite{MR3012510}), that
\begin{equation}
\label{eq:formula_Lambdak}
\Lambda_k=\sup_{x\in[-1,1]} \sum_{i=0}^k \vert L^k_i(x)\vert,
\end{equation}
and therefore we get
\begin{equation*}
\sup_{x\in[-1,1]} \vert P_k(u)(x)\vert \leq \Lambda_k \max\limits_{0\leq i \leq k} \vert u(x^k_i) \vert,
\end{equation*}
which is exactly~\eqref{eq:bound_infty_from_max}.

Since we used several times~\eqref{eq:bound_infty_from_max} and Proposition~\ref{prop:interpolation_error2} in Section~\ref{sec:bounds}, the bounds that we obtained there depend on the Lebesgue constant $\Lambda_k$. Therefore we need computable (and as sharp as possible) upper bounds for $\Lambda_k$. One possibility is to use the well known bound (again see for instance~\cite{MR3012510})
\begin{equation}
\label{eq:bound_Lambdak}
\Lambda_k \leq 1+\frac{2}{\pi}\ln(k+1).
\end{equation}
However, we can do better, at least when $k$ is odd. In that case, it has been shown (see \cite{MR0194799}) that
\begin{equation*}
\Lambda_k = \frac{1}{k}\sum_{l=0}^{k-1}\cot \left( \frac{2l+1}{4k}\pi \right) ,
\end{equation*}
and this formula can be evaluated rigorously using interval arithmetic. Unfortunately, there is no such formula for $k$ even. For small values ($k=2$ and $k=4$) we computed $\Lambda_k$ by hand using~\eqref{eq:formula_Lambdak}, and for $k\geq 6$ we used~\eqref{eq:bound_Lambdak} (it is also know that $\Lambda_k \sim \frac{2}{\pi}\ln(k)$, therefore~\eqref{eq:bound_Lambdak} is sharp for large $k$).

We now turn our attention to the interpolation estimates of Section~\ref{sec:general}. We point out that the analogue of Proposition~\ref{prop:interpolation_error1} for the Chebyshev interpolation points of the first kind is very standard, and can be found in many textbooks. However, the case of the Chebyshev interpolation points of the second kind is seldom discussed, therefore we include a proof here for the sake of completeness (which is nothing but a slight adaptation of the \emph{standard} proof).

\medskip
\noindent \textit{Proof of Proposition~\ref{prop:interpolation_error1}.}
We consider the polynomial $W_k(x)=\prod_{l=0}^k (x-x_l^k)$ and use the standard interpolation error estimate for a function $u\in\CC^{k+1}$ (see for instance \cite{MR1656150}),
\begin{equation*}
\left\Vert u-P_k(u)\right\Vert_{\infty} \leq \frac{\left\Vert W_k\right\Vert_{\infty}}{(k+1)!}\left\Vert u^{(k+1)} \right\Vert_{\infty}.
\end{equation*}
To prove Proposition~\ref{prop:interpolation_error1}, we only have to show that $\left\Vert W_k\right\Vert_{\infty}= \frac{1}{2^{k-1}}$ (the remaining factor $\frac{1}{2^{k+1}}$ coming from the rescaling). Introducing, for $k\in\N$, the $k$-th Chebyshev polynomial of the second kind $U_k$, defined by
\begin{equation*}
U_k(\cos(\theta))=\frac{\sin(k\theta)}{\sin(\theta)},
\end{equation*} 
we have that
\begin{equation}
\label{def:W_k} 
W_k(x)=(x-1)(x+1)\frac{U_k(x)}{2^{k-1}}.
\end{equation}
Indeed, the right hand side of~\eqref{def:W_k} is a unitary polynomial of degree $k+1$, that has the same zeros as $W_k$. We can then rewrite
\begin{align*}
W_k(x) &= \frac{1}{2^{k-1}}(x-1)(x+1)\frac{\sin(k\arccos(x))}{\sqrt{1-x^2}} \\
&= -\frac{1}{2^{k-1}}\sqrt{1-x^2}\sin(k\arccos(x)),
\end{align*}
and we end up with
\begin{equation*}
\left\Vert W_k\right\Vert_{\infty} = \frac{1}{2^{k-1}},
\end{equation*}
so Proposition~\ref{prop:interpolation_error1} is proven. \hfill $\qed$

\medskip
\noindent \textit{Proof of Proposition~\ref{prop:interpolation_error2}.}
The first part of the bound, namely
\begin{equation*}
\left(1+\Lambda_k\right)\left(\frac{\pi}{4}\right)^l\frac{(k+1-l)!}{(k+1)!},
\end{equation*}
comes from a combination of the Lebesgue constant and Jackson's Theorem, and can be found in~\cite{MR1656150}. However, it does not give a very sharp interpolation error estimate for small values of $k$ and $l$, therefore we derive here the second part of the bound, namely
\begin{equation*}
\frac{1}{l!2^l}\sum_{q=0}^{\left[\frac{l-1}{2}\right]}\frac{1}{4^q}\binom{l-1}{2q}\binom{2q}{q}
\end{equation*}
that can be used in those cases. 

Letting $u(x)=x^p$ (with $p\in\{0,\ldots,k\}$) in \eqref{eq:Lagrange_polynomial} leads to
\begin{equation}
\label{eq:lagrange_basis}
x^p=\sum_{i=0}^k (x^k_i)^p L^k_i(x), \quad \text{for all}~x\in\R.
\end{equation}
We now fix a function $u\in\CC^{l}$. Using~\eqref{eq:lagrange_basis} with $p=0$ (that is $1=\sum_{i=0}^k L^k_i(x)$), we get
\begin{equation*}
P_k(u)(x)-u(x)=\sum_{i=0}^k u(x^k_i)L^k_i(x)-  u(x) \left( \sum_{i=0}^k L^k_i(x) \right) =\sum_{i=0}^k \left(u(x^k_i)-u(x)\right)L^k_i(x).
\end{equation*}
Using Taylor's formula, we then get
\begin{align*}
P_k(u)(x)-u(x) &= \sum_{i=0}^k \left(\sum_{p=1}^{l-1}\frac{(x^k_i-x)^p}{p!}u^{(p)}(x) + \frac{(x^k_i-x)^l}{l!}u^{(l)}(y_i) \right)L^k_i(x) \\
&= \sum_{p=1}^{l-1} \frac{u^{(p)}(x)}{p!}\sum_{i=0}^k (x^k_i-x)^pL^k_i(x) + \sum_{i=0}^k\frac{(x^k_i-x)^l}{l!}u^{(l)}(y_i)L^k_i(x),
\end{align*}
for some $y_i$ in $[-1,1]$. Then, expanding the $(x^k_i-x)^p$ terms and using again~\eqref{eq:lagrange_basis}, we get that
\begin{align*}
\sum_{i=0}^k (x^k_i-x)^pL^k_i(x) &= \sum_{q=0}^p\binom{p}{q}(-x)^{p-q}\sum_{i=0}^k (x^k_i)^qL^k_i(x) \\
&= \sum_{q=0}^p\binom{p}{q}(-x)^{p-q}x^q \\
&= (x-x)^p \\
&= 0,
\end{align*}
and thus
\begin{align*}
P_k(u)(x)-u(x) = \sum_{i=0}^k\frac{(x^k_i-x)^l}{l!}u^{(l)}(y_i)L^k_i(x),
\end{align*}
Letting 
\begin{equation*}
\lambda^k_i \bydef \prod_{j\neq i}\frac{1}{x^k_i-x^k_j},
\end{equation*}
we can easily observe that $L_i^k(x) = \lambda^k_i W_k(x)/(x-x_i^k)$, and therefore
\begin{align*}
\left\vert P_k(u)(x)-u(x)\right\vert &\leq \frac{\left\Vert u^{(l)}\right\Vert_{\infty}}{l!}\sum_{i=0}^k\vert x^k_i-x\vert^l\vert L^k_i(x)\vert \\
&= \frac{\left\Vert u^{(l)}\right\Vert_{\infty}}{l!}\left\vert W_k(x)\right\vert \sum_{i=0}^k\vert\lambda_i^k \vert \vert x^k_i-x\vert^{l-1}.
\end{align*}
According to \cite{MR3012510}, in case the points $x^k_i$ are the Chebyshev interpolation points of the second kind, we have
\[
\lambda^k_i = 
\begin{cases}
(-1)^i\frac{2^{k-1}}{k}, & i = 1,\dots,k-1,\\
(-1)^i\frac{2^{k-1}}{2k}, &i = 0,k.
\end{cases}
\]
Remembering that $\left\Vert W_k\right\Vert_{\infty}=\frac{1}{2^{k-1}}$, we get
\begin{equation*}
\left\vert P_k(u)(x)-u(x)\right\vert \leq \frac{\left\Vert u^{(l)}\right\Vert_{\infty}}{l!} \frac{1}{k} \left(\frac{(1+x)^{l-1}}{2}+\frac{(1-x)^{l-1}}{2} + \sum_{i=1}^{k-1} \vert x^k_i-x\vert^{l-1}\right).
\end{equation*}
The function 
\begin{equation*}
x\mapsto \frac{(1+x)^{l-1}}{2}+\frac{(1-x)^{l-1}}{2} + \sum_{i=1}^{k-1} \vert x^k_i-x\vert^{l-1}
\end{equation*} 
is even and increasing on $[0,1]$, therefore its maximum is reached at $x=1$ and we get
\begin{equation*}
\left\vert P_k(u)(x)-u(x)\right\vert \leq \frac{\left\Vert u^{(l)}\right\Vert_{\infty}}{l!} \frac{1}{k} \left(2^{l-2} + \sum_{i=1}^{k-1} \left(1-\cos \frac{i\pi}{k}\right)^{l-1}\right).
\end{equation*}
Then, we compute
\begin{align*}
2^{l-2} + \sum_{i=1}^{k-1} \left(1-\cos \frac{i\pi}{k}\right)^{l-1} & = \sum_{i=0}^{k} \left(1-\cos \frac{i\pi}{k}\right)^{l-1} - 2^{l-2} \\
& = \sum_{i=0}^{k} \sum_{q=0}^{l-1}\binom{l-1}{q}(-1)^q\cos^q \frac{i\pi}{k} - 2^{l-2} \\
& = \sum_{q=0}^{\left[\frac{l-1}{2}\right]}\binom{l-1}{2q}\sum_{i=0}^{k}\cos^{2q} \frac{i\pi}{k} - 2^{l-2} \\
& = \sum_{q=0}^{\left[\frac{l-1}{2}\right]}\binom{l-1}{2q}\sum_{i=0}^{k}\frac{1}{4^q}\left(\binom{2q}{q}+\sum_{j=0}^{q-1}\binom{2q}{j}\cos 2(q-j)\frac{i\pi}{k}\right) - 2^{l-2} \\
& = \sum_{q=0}^{\left[\frac{l-1}{2}\right]}\binom{l-1}{2q}\frac{1}{4^q}\left((k+1)\binom{2q}{q}+2\sum_{j=0}^{q-1}\binom{2q}{j}\right) - 2^{l-2} \\
& = \sum_{q=0}^{\left[\frac{l-1}{2}\right]}\binom{l-1}{2q}\frac{1}{4^q}\left(k\binom{2q}{q}+4^q\right) - 2^{l-2} \\
& = k\sum_{q=0}^{\left[\frac{l-1}{2}\right]}\binom{l-1}{2q}\binom{2q}{q}.
\end{align*}
We end up with
\begin{equation*}
\left\vert P_k(u)(x)-u(x)\right\vert \leq \frac{\left\Vert u^{(l)}\right\Vert_{\infty}}{l!} \sum_{q=0}^{\left[\frac{l-1}{2}\right]}\binom{l-1}{2q}\binom{2q}{q},
\end{equation*}
and Proposition~\ref{prop:interpolation_error2} is proven (the lacking $\frac{1}{2^l}$ factor coming from the time rescaling). \hfill $\qed$


\end{document}